\newtheorem{lemat}{Lemma}[section]
\newtheorem{tw}{Theorem}[section]
\newtheorem{ass}{Assumption}[section]
\newtheorem{defi}{Definition}[section]
\newtheorem*{uwaga}{Remark}
\numberwithin{equation}{section}
\def\vc#1{\boldsymbol{#1}}
\def\ten#1{\boldsymbol{#1}}
\newcommand{\dxdt}{\,{\rm{d}}x\,{\rm{d}}t}
\newcommand{\dx}{\,{\rm{d}}x}
\newcommand{\ds}{\,{\rm{d}}s}
\newcommand{\dt}{\,{\rm{d}}t}
\newcommand{\dtau}{\,{\rm{d}}\tau}
\newcommand{\braket}[1]{(\!(#1)\!)}
\def\div{\rm{div\,}}
\title{Thermo-visco-elasticity for Norton-Hoff-type models}
\author[P. Gwiazda]{Piotr Gwiazda}
\address{Institute of Applied Mathematics, University of Warsaw, ul. Banacha 2, 02-097 Warsaw, Poland}
\email{pgwiazda@mimuw.edu.pl}
\author[F. Z. Klawe]{Filip Z. Klawe}
\address{Institute of Applied Mathematics, University of Warsaw, ul. Banacha 2, 02-097 Warsaw, Poland}
\email{aswiercz@mimuw.edu.pl}
\author[A. \'Swierczewska-Gwiazda]{Agnieszka \'Swierczewska-Gwiazda}
\address{Institute of Applied Mathematics, University of Warsaw, ul. Banacha 2, 02-097 Warsaw, Poland}
\email{fzklawe@mimuw.edu.pl}
\begin{document}

\begin{abstract}
Our research is directed to a quasi-static evolution of the thermo-visco-elastic model. We assume that the material is subject to two kinds of  mechanical deformations: elastic and inelastic. Moreover, our analysis captures the influence of the temperature on the visco-elastic properties of the body. 
The novelty of the paper is the consideration of the thermodynamically complete model to describe this kind of phenomena related with  a hardening rule of Norton-Hoff type. We provide the proof of   existence of solutions to thermo-visco-elastic model in a simplified setting, namely  the thermal expansion effects are neglected. Consequently, the coupling between the temperature and the displacement occurs only in the constitutive function for the evolution of the visco-elastic strain. 

\end{abstract}
\keywords{thermodynamical completeness, visco-elasticity, thermal effects, Galerkin approximation, monotonicity method, renormalizations}

\subjclass[2000]{74C10, 35Q74, 74F05}
\maketitle

\section{Introduction}

We are aiming to describe the response of thermo-visco-elastic material to applied external forces and the heat flux through the boundary. The  system of equations capturing  the displacement,  temperature and visco-elastic strain of the body is 
a consequence of physical principles such as balance of momentum and balance of energy, cf. \cite{GreenNaghdi, LandauLifshitz}, see  also  \cite{GKSG}. The equations are  complemented by the  constitutive relation for the Cauchy stress tensor and the constitutive equation for the evolution of the visco-elastic strain tensor.
Although we treat the case where the thermal expansion is negligible, but the changes of temperature affect the visco-elastic properties of the considered material. We shall observe it in the appearance of the temperature-dependent constitutive relation in the evolution equation for the visco-elastic strain tensor. 
%The aim of this paper is to prove the existence of solutions to the thermo-visco-elastic model. We derive this model using standard physical principles, such as balance of linear momentum and balance of energy, cf. \cite{GreenNaghdi, LandauLifshitz}, see  also  \cite{GKSG} for details on considered model. The equations are supplemented by the  Additionally, we assume an infinitesimal displacement of the body and hence  linearize the elastic relation.
%
%\marginpar{Co zrobić z wyprowadzeniem modelu?}

We  assume that the  body $\Omega \subset \mathbb{R}^3$ is an open bounded set with a $C^1$ boundary and moreover, the body  is homogeneous in space. The material %is subject to 
undergoes
two kinds of  deformations:  elastic and visco-elastic. By the first type we understand the deformations which are reversible and the second ones are  irreversible. 
%Moreover, the displacement and  the temperature are related.
%% It means that changes of one of them may cause the changes of the others. Changes of temperature affect the rate of visco-elastic strain tensor and changes of the Cauchy stress tensor affect the temperature. 
%Due to this fact we cannot split the thermo-visco-elastic system of equations into two independent systems.
The problem is captured by the following system
\begin{equation}
\left\{
\begin{array}{rclr}
- \div \ten{T} &=& \vc{f} & \mbox{in } \Omega\times(0,T),
\\
\ten{T} &=& \ten{D}(\ten{\varepsilon}(\vc{u}) - \ten{\varepsilon}^{\bf p} ) & \mbox{in } \Omega\times(0,T),
\\
\ten{\varepsilon}^{\bf p}_t &=& \ten{G}(\theta,\ten{T}^d) & \mbox{in } \Omega\times(0,T),
\\
\theta_t - \Delta\theta &=& \ten{T}^d:\ten{G}(\theta,\ten{T}^d) & \mbox{in } \Omega\times(0,T),
\end{array}
\right.
\label{full_system_2}
\end{equation}
which describes the quasi-static evolution of the displacement of the material $\vc{u}:\Omega\times\mathbb{R}_+\rightarrow \mathbb{R}^3$, the temperature of the material $\theta:\Omega\times\mathbb{R}_+\rightarrow\mathbb{R}_+$ and the visco-elastic strain tensor $\ten{\varepsilon}^{\bf p}:\Omega\times\mathbb{R}_+\rightarrow \mathcal{S}^3_d$.
%Solution of this system of equations is to find $\vc{u}:\Omega\times\mathbb{R}_+\rightarrow \mathbb{R}^3$ - displacement of the material, $\theta:\Omega\times\mathbb{R}_+\rightarrow\mathbb{R}_+$ - temperature of the material and $\ten{\varepsilon}^{\bf p}:\Omega\times\mathbb{R}_+\rightarrow \mathcal{S}^3$ - plastic strain tensor. 
We denote by $\mathcal{S}^3$ the set of symmetric $3 \times 3$-matrices with real entries and by $\mathcal{S}^3_d$ a subset of $\mathcal{S}^3$ which contains traceless matrices. By $\ten{T}^d$ we mean the deviatoric part (traceless) of the tensor $\ten{T}$, i.e. $\ten{T}^d=\ten{T}-\frac{1}{3}tr(\ten{T})\ten{I}$, where $\ten{I}$ is the identity matrix from $\mathcal{S}^3$. Additionally, $\ten{\varepsilon}(\vc{u})$ denotes the symmetric part of the gradient of the displacement~$\vc{u}$, i.e. $\ten{\varepsilon}(\vc{u})=\frac{1}{2}(\nabla\vc{u} + \nabla^T\vc{u})$.  The volume force is denoted by $\vc{f}:\Omega\times\mathbb{R}_+\rightarrow \mathbb{R}^3$.

%% under the action of an external force which causes spatial deformation and changes temperature. Two types of the body's displacement are allowed: elastic and visco-elastic. By the elastic deformation we mean the deformation which returns to the initial state after the expiry of the external forces action. The visco-elastic deformation is here understood as a deformation which is irreversible. Displacement of the body and temperature are related. The changes of the temperature affect the rate of plastic stress tensor and inversely, the changes of the Cauchy stress tensor affect the body's temperature. Due to this fact, the proposed system of equations is difficult to solve - we cannot consider the separate equations for each of the unknowns.

The visco-elastic strain tensor is described by the evolutionary equation with prescribed constitutive function $\ten{G}(\cdot,\cdot)$. 
%Arguments of this function are temperature and deviatoric part (traceless) of Cauchy stress tensor.
The function $\ten{G}(\cdot,\cdot)$ is assumed to be monotone and to satisfy polynomial growth and coercivity conditions. 
\begin{ass}
The function $\ten{G}(\theta,\ten{T}^d)$ is  continuous with respect to   $\theta$ and $\ten{T}^d$ and satisfies for $p\ge 2$
the following conditions:
\begin{itemize}
\item[a)] $(\ten{G}(\theta,\ten{T}^d_1)-\ten{G}(\theta,\ten{T}^d_2)):(\ten{T}^d_1-\ten{T}^d_2) \geq 0$, for all $\ten{T}_1^d,\ten{T}_2^d \in \mathcal{S}^3_d$ and $\theta\in\mathbb{R}_+$;
\item[b)] $|\ten{G}(\theta,\ten{T}^d)| \leq C(1 + |\ten{T}^d|)^{p-1}$, where $\ten{T}^d\in\mathcal{S}^3_d$, $\theta\in\mathbb{R}_+$;
\item[c)] $\ten{G}(\theta,\ten{T}^d):\ten{T}^d \geq \beta |\ten{T}^d|^p$, where $\ten{T}^d\in\mathcal{S}^3_d$, $\theta\in\mathbb{R}_+$,
%\item[d)] $\ten{G}(\cdot,\ten{T}^d)$ is bounded.
\end{itemize}
\label{ass_G}
where  $C$ and $\beta$ are positive constants,  independent of the temperature $\theta$.
\end{ass}

We complete the considered problem by formulating  the  initial conditions
\begin{equation}
\left\{
\begin{array}{rcl}
%\vc{u}(x,0)&=&\vc{u}_0(x),
%\\
\theta(x,0)&=&\theta_0(x), 
\\
\ten{\varepsilon}^{\bf p}(x,0)&=&\ten{\varepsilon}^{\bf p}_0(x),
\end{array}
\right.
\label{init_0}
\end{equation}
in $\Omega$ and boundary conditions
\begin{equation}
\left\{
\begin{array}{rcl}
\vc{u}&=&\vc{g}, \\
\frac{\partial \theta}{\partial \vc{n}}&=&g_{\theta},
\end{array}
\right.
\label{boun_0}
\end{equation}
on $\partial\Omega\times(0,T)$.

The properties of the material under consideration determine the choice of the function $\ten{G}$. 
Such a framework includes  the classical 
Norton-Hoff model, cf.~\cite{CheAl}, which we shall briefly discuss in a sequel.
%$\ten{G}$.
%and it describes the materials properties.
There are various different relations considered,
%In the literature many different models are presented, 
e.g. 
%Norton-Hoff \cite{CheAl},
\begin{itemize}
\item
Bodner-Partom model \cite{Bartczak, MANA:MANA5, MMA:MMA802}:
\begin{equation}
\begin{split}
\ten{G}(\theta,\ten{T}^d) &= \mathcal{G}\left(\frac{\left\{|\ten{T}^d| + \beta(\theta) \right\}^+}{y} \right)\frac{\ten{T}^d}{|\ten{T}^d|}
,
\\
y_t & = \gamma(y) \mathcal{G}\left(\frac{|\ten{T}^d|}{y} \right) |\ten{T}^d| - A \delta(y),
\end{split}
\end{equation}
where $y : \Omega \times \mathbb{R}_+ \to \mathbb{R}_+$ describes the isotropic hardening of the metal, $\{\cdot\}^+$ stands for the positive part of $\{\cdot\}$, $\gamma : \mathbb{R}_+ \supset D (\gamma) \to \mathbb{R}_+$ and $\delta : \mathbb{R}_+ \supset D(\delta) \to \mathbb{R}_+$ are given functions and $A$ is a positive constant. Moreover, functions $\mathcal{G}(\cdot)$, $\gamma(\cdot)$, $\delta(\cdot)$ and $\beta(\cdot)$ fulfill some specific properties.

\item Mr\'{o}z model \cite{GKSG, brokate, Homberg200455}:
\begin{equation}
\ten{G}(\theta,\ten{T}^d) = g(\theta)\ten{T}^d,
\end{equation}
where $g:\mathbb{R}_+\to \mathbb{R}_+$ is a continuous function.
\item Prandtl-Reuss model with linear kinematic hardening \cite{ChR}
\begin{equation}
\begin{split}
\ten{\varepsilon}^{\bf p}_t &\in \partial I_{K(\theta)}(\ten{T} - \alpha \ten{\varepsilon}^{\bf p}),
\end{split}
\end{equation}
where $I_{K(\theta)}$ is the indicator function of the closed and convex subset $K(\theta) = \{\ten{T}\in\mathcal{S}^3: |\ten{T}^d|\leq k-\theta\}$ and $\alpha, k>0$ are material parameters.  Furthermore, $\partial I_{K(\theta)}$ is a subdifferential of the function $I_{K(\theta)}$.
\end{itemize}
For further  examples of constitutive relations (e.g. classical Maxwell model, models proposed by Chaboche, Hart, Miler, Bruhns and many others) we refer to \cite[Chapter 2.2]{Alber}.

Our motivation for current considerations were the results of 
%An inspiration for this paper was a result of 
Alber and Chełmiński \cite{CheAl} and  of H\"{o}mberg \cite{Homberg200455}. 
%We expand their results.% consider a more general case of model presented before. 
%We consider another class of models and a good example of model which fulfills these assumptions is the Mr\'oz model, cf. \cite{mroz}. In Mr\'oz model the dependency between the rate of visco-elastic strain tensor and Cauchy stress tensor is linear. 
%Furthermore, the rate of visco-elastic strain tensor depends also on the temperature. 
%Many papers considering the Mr\'oz model have appeared within last the 50 years. The most popular problem was the one of the yield surface, e.g. \cite{brokate,}. 
%{\color{blue} 
In \cite{CheAl} the authors considered the quasi-static visco-elasticity\footnote{The authors used the notion {\it visco-plasticity} which is sometimes also applied in the literature to capture the appearance of irreversible deformations.}
 models with Norton-Hoff constitutive function, namely of the power-law type
\begin{equation}
\ten{G} = c |\ten{T}|^{p-1} \ten{T}
\label{eq:norton-hoff}
\end{equation}
with $p>2$.  The parameter $c$ was either assumed to be a positive constant or dependent on an additional relaxation parameter described by a separate  equation. 
  The scheme of the proof in \cite{CheAl} was to formulate the problem in a way that it fits to the abstract theory of maximal monotone operators, cf.~\cite{Barbu}. In the current paper we include the thermal effects of the process through
 the dependence of  the constitutive function $\ten{G}$ on the temperature. This dependence  obstructs  following the same scheme and
 requires different approach.

 Furthermore, we assume that  $\ten{G}(\theta,\cdot)$ depends only on the deviatoric part of the Cauchy stress tensor and its range is the set of traceless matrices. The last assumption, together with the fact that $\ten{\varepsilon}^{\bf p}_0(x)$ is traceless, provides that also $\ten{\varepsilon}^{\bf p}$ is  traceless. Vanishing of the trace of the deformation tensor corresponds to preserving the volume of the material.  Indeed, the volume change is associated only with the elastic response of the material, and the plastic response is essentially incompressible, cf.~\cite{Gurtin}. The dependence of $\ten{G}(\theta,\cdot)$ only on $\ten{T}^d$ is essential to maintain the coercivity of the model. Once we know that the range of $\ten{G}$ is 
 ${\mathcal S}_d^3$, then even for the isothermal process, namely the case of $\ten{G}=\ten{G}(\ten{T}) $
 we observe that  $\ten{G}(\ten{T}):\ten{T}=\ten{G}(\ten{T}):\ten{T}^d$. Then e.g. taking as $\ten{T}$ the identity matrix we immediately see that  $\ten{G}(\ten{I}):\ten{I}^d=0$.
Let us now comment on the technical consequences of this assumption.  Contrary to the proof of  Alber and Chełmiński, where they showed that $\ten{T}$ belongs to $L^p(0,T,L^p(\Omega,\mathcal{S}^3))$ for $p\geq 2$, the estimates conducted in the current situation provide only  that $\ten{T}$ belongs to $L^2(0,T,L^2(\Omega,\mathcal{S}^3))$.

% This assumption causes that the regularity of the Cauchy stress tensor $\ten{T}$ obtained in this paper is lower than the one obtained in the proof of  Alber and Chełmiński. We prove that $\ten{T}$ belongs to $L^2(0,T,L^2(\Omega,\mathcal{S}^3))$ in the contrast to \cite{CheAl} where $\ten{T}$ belongs to $L^p(0,T,L^p(\Omega,\mathcal{S}^3))$ for $p\geq 2$.
%This difference is caused by the definition of constitutive function $\ten{G}(\cdot,\cdot)$. In \cite{CheAl} the function $\ten{G}(\cdot,\cdot)$ depends on the full Cauchy stress tensor and its range is the whole set of symmetric matrices, whereas we assume that constitutive function depends only of deviatoric part of the Cauchy stress tensor and the range of $\ten{G}(\cdot,\cdot)$ are deviatoric matrices. 
%}

%Similar results can be found in \cite{Homberg200455}. 
H\"{o}mberg in \cite{Homberg200455} considered more general physical phenomena including  the electro-magnetic effects. The changes of temperature influenced the  concentration of different phases of materials and
this dependence was prescribed by some general operator $\mathcal{P}[\cdot]$ having {\it good} properties.
Then the constitutive function describing the evolution of visco-elastic strain depends no more on the temperature, but on these concentrations. Moreover it is linear with respect to the deviatoric part of the Cauchy stress tensor, namely corresponds to   the Mr\'{o}z model. 
%H\"{o}mberg assume that concentrations depends on the temperature by some general operator $\mathcal{P}[\cdot]$ with good properties. 
%The existence of solution was proved by Schauder fixed-point theorem. 
The similarities with our approach are related with the construction of the approximated problem, namely by the truncation of  the terms which appear on the right-hand side of the heat equation and are only integrable. The method also follows the framework of Boccardo and Gallou\"et. Nevertheless, because of the different structure of the problem, H\"{o}mberg can show the strong convergence of the approximated sequence of the Cauchy stress tensor.  For the concept of showing this strong convergence observe that in the case of linear Mr\'oz relation, and in fact also in the case of Norton-Hoff relation \eqref{eq:norton-hoff}, the stronger condition than monotonicity holds, namely  the uniform monotonicity condition 
$$(\ten{G}(\theta,\ten{T}^d_1)-\ten{G}(\theta,\ten{T}^d_2)):(\ten{T}^d_1-\ten{T}^d_2) \geq c | \ten{T}^d_1-\ten{T}^d_2|^p \mbox{ for all } \ten{T}_1^d,\ten{T}_2^d \in \mathcal{S}^3_d \mbox{ and }\theta\in\mathbb{R}_+.$$
For the proof see e.g.~\cite{maleknecas}. 

The studies on the Mr\'oz model presented in~\cite{GKSG} essentially used the strong monotonicity of the function 
$\ten{G}$ in the second variable. The existence proof used the methods developed in~\cite{GwSw2005, Sw2006, ChGw2007} arising from the tools of Young measures. In the present setting none of the assumptions of strong nor uniform monotonicity are needed. We only assume monotonicity of $\ten{G}$.

Following Bartczak \cite{Bartczak}, Chełmiński \cite{MMA:MMA802}, Chełmiński and Racke \cite{ChR}, Duvaut and J.L. Lions \cite{duvautLions}, Johnson \cite{johnson1,johnson2}, Ne\v{c}as and Hlav\'{a}\v{c}ek \cite{NH}, Suquet \cite{suquet1,suquet2,suquet3}, Temam \cite{temam1,temam2} and many others, we study the quasi-static evolution, i.e. the evolution, which is slow and we neglect the acceleration term in the equation for balance of momentum. Moreover, we consider the model with infinitesimal displacement. In a consequence, the dependence between the Cauchy stress tensor and the symmetric gradient of displacement is linear (generalized Hooke's law, for more details see \cite{NH} or \cite{rajagopal}). 
%It is a standard way to consider such problems, cf.~\cite{duvautLions,johnson1,johnson2,suquet1,suquet2,suquet3,temam1,temam2}.
Much of the approaches involve the models that are purely mechanical, namely concern
the theory of inelastic and infinitesimal deformations  with the nonlinear inelastic constitutive relation  of monotone type, however neglect all thermal influences, see~\cite{Alber} and also \cite{CHelG2,MANA:MANA5,MMA:MMA844, MMA:MMA802}. On the other hand,
%%In a very large class of models the product $\ten{T}:\ten{\varepsilon}^{\bf p}_t$ is only an integrable function, which causes that the energy method does not work in general and we have to use some more delicate methods. Following Boccardo and Gallou\"{e}t \cite{Boccardo}, we show existence of a weak solution of temperature only in the space $L^q(0,T,W^{1,q}(\Omega))$ for all $q<\frac{5}{4}$.% in three dimensions. 
%%Regarding, we cannot use this technique for Galerkin approximations, hence we use independent Galerkin approximation for displacement and for temperature. We pass to the limit in each approximation independently. First, we pass to the limit in temperature - to use Boccardo and Gallou\"{e}t approach for the heat equation. Then, we pass to the limit in approximations in displacement. Similar approach was used in \cite{Bull}.
%%Next problem, which we must consider, is the form of the right hand side of the heat equation - this function is only the integrable function.
%{\color{red} To bym usunął, bo mi nie pasuje  }
%{\color{blue}
%In the article \cite{ChR} a modification of the Prandtl-Reuss model for perfect elasto-plasticity with a yield function depending explicitly on the temperature is studied.  Moreover, it is assumed that the considered material possesses the kinematic hardening property and the evolution equation for the backstress (new internal variable enlarging locally the set of admissible stresses) is linear. 
%}
the mathematical analysis of linear thermo-elasticity is also a classical, well understood topic, cf. \cite{JR}, contrary to an analysis of thermo-inelastic models. By the thermo-inelastic models we mean the systems consisting of balance of momentum for kind of inelastic deformation and the equation for an evolution of the temperature. In the equation for balance of momentum for inelastic deformation the stress is not proportional to the strain, i.e. there appear term which absorbs the mechanical energy. There are only some results for special models or for simplified models in the literature \cite{Bartczak, BR, ChR}.

If we introduce thermal effects into various purely mechanical models, then  the right hand side of the heat equation 
(the product $\ten{T}^d:\ten{G}(\theta,\ten{T}^d)$) turns out to be only an integrable function. 
In such a case  the standard energy methods fail and one needs to search for  more delicate tools. 
Using the Boccardo and Gallou\"{e}t \cite{Boccardo} approach to prove the existence of solutions to the heat equation the essential point is to use the truncation of the solution as a test function. This is however difficult to combine with a classical 
Galerkin method as the truncation of a function may no longer be a linear combination of the functions from the Galerkin basis. %could be not a function from Galerkin subspace. 
Therefore we appeal to non-standard energy methods, such as two-level Galerkin approximation, see also  \cite{PhDBulicek,BFM,Bull}. The new difficulty which arises here is the construction of the appropriate basis for approximation of the strain tensor $\ten{\varepsilon}^{\bf p}$, for details see Appendix~\ref{B}.

All functions appearing in this paper are the  functions of position $x$ and time $t$. We often omit the variables of the function and write $\vc{u}$ instead of $\vc{u}(x,t)$. All of the computation are conducted  in Lagrangian coordinates. In view of  the fact that the displacement is small, the stress tensor in the Lagrangian coordinates is approximated by the stress tensor in Eulerian coordinates. This is a standard way of considering the inelastic models, for more details see \cite[Chapter 13.2]{temammiranville}.

Before we formulate the definition of weak solutions and state the main result of the paper let us introduce the notation  
$W^{1,p'}_{\vc{g}}(\Omega,\mathbb{R}^3):=\left\{\vc{u} \in W^{1,p'}(\Omega,\mathbb{R}^3): \vc{u}=\vc{g} \mbox{ on } \partial\Omega \right\}$. %by $W^{1,2}_{\vc{g}}(\Omega,\mathbb{R}^3)$.

%{\color{red}  
\begin{defi}%{Weak solution of the system \eqref{full_system_2} }
Let $p\geq 2$, $q<\frac{5}{4}$ and $s\in\mathbb{R}$ be large enough. The triple of functions 
\begin{equation}
\begin{split}
\vc{u}&\in L^{p'}(0,T,W^{1,p'}_{\vc{g}}(\Omega,\mathbb{R}^3))%\cap C([0,T],L^{p'}(\Omega,\mathbb{R}^3)),
\\
\ten{T}&\in L^2(0,T,L^2(\Omega,\mathcal{S}^3))
\end{split}
\nonumber
\end{equation}
and 
\begin{equation}
\theta\in L^q(0,T,W^{1,q}(\Omega))\cap C([0,T],W^{-s,2}(\Omega))
\nonumber
\end{equation}
is a weak solution to the system \eqref{full_system_2} if
\begin{equation}
\begin{split}
\int_0^T\int_{\Omega}\ten{T}:\nabla\vc{\varphi} \dxdt 
&= \int_0^T\int_{\Omega}\vc{f}\cdot \vc{\varphi} \dxdt ,
%\\
%\ten{T} = \ten{D}(\ten{\varepsilon} -\ten{\varepsilon}^{\bf p} ),
%\\
%- \int_0^T\int_{\Omega} \ten{\varepsilon}^{\bf p}:\ten{\Phi}_t  \dxdt -
%\int_{\Omega} \ten{\varepsilon}^{\bf p}_0(x):\ten{\Phi}(x,0) \dx &=
%\int_0^T\int_{\Omega} \ten{G}(\theta,\ten{T}^d):\ten{\Phi} \dxdt ,
\end{split}
\end{equation}
where 
\begin{equation}
\ten{T}=\ten{D}(\ten{\varepsilon}(\vc{u}) - \ten{\varepsilon}^{\bf p}),
\end{equation}
and
\begin{equation}
\begin{split}
-\int_0^T\int_{\Omega} \theta\phi_t \dxdt -
\int_{\Omega} \theta_0(x)\phi(0,x) \dx \qquad \qquad \qquad & \\ + 
\int_0^T\int_{\Omega}  \nabla\theta\cdot\nabla\phi  \dxdt -
\int_0^T\int_{\partial\Omega}g_{\theta}\phi  \dxdt &= 
\int_0^T\int_{\Omega} \ten{T}^d:\ten{G}(\theta,\ten{T}^d)\phi \dxdt,
%\\
%-\int_0^T \int_{\Omega} S(\theta) \phi_t \dxdt -
%\int_{\Omega} S(\theta_0(x))\phi(0,x) \dx &
%+ \int_0^T \int_{\Omega} S'(\theta)\nabla \theta\cdot\nabla \phi \dxdt 
%\\
%{\color{red} - \int_0^T \int_{\partial\Omega} S'(\theta) g_{\theta}\phi \dxdt}
%+ \int_0^T \int_{\Omega} S''(\theta) |\nabla\theta|^2 \phi \dxdt
%& =
%\int_0^T\int_{\Omega} \ten{T}^d:\ten{G}(\theta,\ten{T}^d) S'(\theta) \phi \dxdt,
\end{split}
\end{equation}
holds for every test function $\vc{\varphi}\in C^{\infty}([0,T],C^{\infty}_c(\Omega,\mathbb{R}^3))$ and $\phi\in C^{\infty}_c([-\infty,T),C^{\infty}(\Omega))$. Furthermore, the visco-elastic strain tensor can be recovered from the equation on its evolution, i.e.
\begin{equation}
\ten{\varepsilon}^{\bf p}(x,t) = \ten{\varepsilon}^{\bf p}_0(x) + \int_0^t \ten{G}(\theta(x,\tau),\ten{T}^d(x,\tau)) \dtau,
\end{equation}
for a.e. $x\in\Omega$ and $t\in [0,T)$. Moreover, $\ten{\varepsilon}^{\bf p} \in W^{1,p'}(0,T,L^{p'}(\Omega,\mathcal{S}^3_d))$.

\end{defi}

\begin{tw}
Let $p\geq 2$ and let initial conditions satisfy $\theta_0 \in L^1(\Omega)$, $\ten{\varepsilon}^{\bf p}_0\in L^2(\Omega,\mathcal{S}^3_d)$, boundary conditions satisfy $\vc{g}\in L^p(0,T, W^{1-\frac{1}{p},p}(\partial\Omega,\mathbb{R}^3))$, $g_{\theta}\in L^2(0,T,L^2(\partial\Omega))$ and volume force $\vc{f}\in L^p(0,T,W^{-1,p}(\Omega,\mathbb{R}^3))$ and function $\ten{G}(\cdot,\cdot)$ satisfy the  Assumption \ref{ass_G}. Then there exists a weak solution to system \eqref{full_system_2}. 
\label{thm:main2}
\end{tw}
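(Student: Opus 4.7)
The strategy is to build weak solutions by a two-level approximation in the spirit of Boccardo--Gallou\"et: an inner Galerkin scheme and an outer truncation of the only-integrable source $\ten{T}^d:\ten{G}(\theta,\ten{T}^d)$ on the right-hand side of the heat equation. I would fix Galerkin bases for the displacement (lifted by an extension $\vc{g}_e$ of the boundary datum $\vc{g}$), for the visco-elastic strain $\ten{\varepsilon}^{\bf p}$ in $L^{p'}(\Omega,\mathcal{S}^3_d)$, which must preserve tracelessness and hence requires the non-standard construction the authors defer to Appendix~\ref{B}, and for the temperature, compatible with the Neumann condition. For a fixed truncation level $k$, solvability at the finite-dimensional level is routine via Carath\'eodory together with a priori estimates.

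For the a priori bounds, quasi-statics combined with time-differentiation of Hooke's law and use of the evolution equation yields the energy identity $\tfrac{1}{2}\tfrac{d}{dt}\int_\Omega \ten{T}:\ten{D}^{-1}\ten{T}\dx + \int_\Omega \ten{T}^d:\ten{G}(\theta,\ten{T}^d)\dx = \int_\Omega \ten{T}:\ten{\varepsilon}(\vc{g}_{e,t})\dx + \langle\vc{f},\vc{u}_t-\vc{g}_{e,t}\rangle$. Coercivity condition (c) of Assumption~\ref{ass_G} then gives $\ten{T}\in L^\infty(0,T;L^2)$ and $\ten{T}^d\in L^p(0,T;L^p)$, and growth (b) yields $\ten{\varepsilon}^{\bf p}_t\in L^{p'}(0,T;L^{p'})$. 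For the heat equation with truncated source, testing with suitable truncations $T_m(\theta)$ delivers the Boccardo--Gallou\"et estimate $\theta\in L^q(0,T;W^{1,q}(\Omega))$ for every $q<5/4$, and an Aubin--Lions argument (with $\theta_t$ in a sufficiently negative Sobolev space) gives strong $L^1$ compactness.

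Passing the Galerkin index to infinity with $k$ fixed, the momentum equation closes by weak convergence and the strong convergence $\theta_n\to\theta$ is in hand. The identification of the weak limit $\ten{\chi}$ of $\ten{G}(\theta_n,\ten{T}_n^d)$ uses Minty's trick: the finite-dimensional energy identity and the truncated heat equation together produce $\limsup_n \int \ten{T}_n^d:\ten{G}(\theta_n,\ten{T}_n^d)\dxdt \le \int \ten{T}^d:\ten{\chi}\dxdt$, and combining monotonicity (a) with the strong convergence of $\theta_n$ and continuity of $\ten{G}$ in $\theta$ lets one insert an arbitrary $\ten{S}\in L^p$ in place of $\ten{T}_n^d$ and conclude $\ten{\chi}=\ten{G}(\theta,\ten{T}^d)$ a.e. This delivers a weak solution of the truncated system, with estimates uniform in $k$.

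The main obstacle is the removal of the truncation $k\to\infty$. The dissipation term is bounded only in $L^1$, so Dunford--Pettis gives at best a vague limit of measures; to identify it with $\ten{T}^d:\ten{G}(\theta,\ten{T}^d)$ one needs strong convergence of $\ten{T}_k^d$ in $L^p_{\mathrm{loc}}$, or at least convergence in measure together with uniform equi-integrability of $\ten{T}_k^d:\ten{G}(\theta_k,\ten{T}_k^d)$. Because Assumption~\ref{ass_G} supplies only plain monotonicity, strong convergence is not automatic from a monotonicity difference as in the Mr\'oz or Norton--Hoff settings. I would follow the Young-measure approach already referenced by the authors \cite{GwSw2005, Sw2006, ChGw2007}, proving that the Young measure generated by $(\theta_k,\ten{T}_k^d)$ is a Dirac mass by passing the limit in the energy identity and invoking monotonicity to exclude non-trivial support. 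This identifies the dissipation term, closes the heat equation in the renormalized sense through an approximation by truncations $T_m$ followed by $m\to\infty$, and yields the weak solution asserted in Theorem~\ref{thm:main2}.
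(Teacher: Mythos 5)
Your overall architecture matches the paper closely: two nested approximation parameters (one Galerkin index that can absorb the truncation and one that cannot), the energy identity obtained by testing the momentum balance with $\vc{u}_t$ and combining with Hooke's law, the coercivity/growth estimates from Assumption~\ref{ass_G}, the Boccardo--Gallou\"et bound $\theta\in L^q(0,T;W^{1,q})$ with Aubin--Lions compactness, and a Minty--Browder identification of the weak $L^{p'}$ limit of $\ten{G}$ at the inner Galerkin level. Up to that point you are on the paper's track.

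The gap is in your treatment of the outer limit. You propose to identify the weak-$*$ limit of the dissipation $\ten{T}_k^d:\ten{G}(\theta_k,\ten{T}_k^d)$ either by establishing strong (or in-measure) convergence of $\ten{T}_k^d$ together with equi-integrability of the product, or by showing that the Young measure generated by $(\theta_k,\ten{T}_k^d)$ collapses to a Dirac. Neither route is available under Assumption~\ref{ass_G} alone: the paper explicitly weakens the setting of~\cite{GKSG} to \emph{plain} monotonicity, and plain monotonicity does not force the Young measure to be atomic (the set where $(\ten{G}(\theta,\ten{S}_1)-\ten{G}(\theta,\ten{S}_2)):(\ten{S}_1-\ten{S}_2)=0$ with $\ten{S}_1\neq\ten{S}_2$ can carry mass), nor does it yield strong or in-measure convergence of $\ten{T}_k^d$. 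Likewise, the product is only bounded in $L^1$, so equi-integrability is not for free.

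What the paper actually does in the outer passage is to repeat the same two ingredients used for the inner one: the energy identity (mollified in time to justify the chain rule) produces the upper bound
\[
\limsup_{k\to\infty}\int_0^{t_2}\!\!\int_\Omega \ten{G}(\tilde\theta+\theta_k,\tilde{\ten{T}}^d+\ten{T}_k^d):\ten{T}_k^d\dxdt \le \int_0^{t_2}\!\!\int_\Omega \ten\chi:\ten{T}^d\dxdt,
\]
and the Minty trick then gives $\ten\chi=\ten{G}(\tilde\theta+\theta,\tilde{\ten{T}}^d+\ten{T}^d)$. The crucial extra observation — which replaces your Young-measure step — is that combining this inequality with the weak convergences forces the nonnegative monotonicity defect $\bigl(\ten{G}(\theta_k,\ten{T}_k^d)-\ten{G}(\theta_k,\ten{T}^d)\bigr):(\ten{T}_k^d-\ten{T}^d)$ to tend to zero in $L^1$. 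Expanding the product $\ten{G}(\theta_k,\ten{T}_k^d):\ten{T}_k^d$ into this defect plus three weak-times-strong cross terms then gives weak $L^1$ convergence of the dissipation to $\ten{G}(\theta,\ten{T}^d):\ten{T}^d$ with no strong convergence of $\ten{T}_k^d$, no equi-integrability, and no Dirac Young measure needed. You should replace your Young-measure plan by this energy-defect argument; as written, your proof of the truncation-removal step does not close.
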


\begin{uwaga}
There is nothing about the uniqueness of solutions in  Theorem \ref{thm:main2}. Using Boccardo and Gallou\"{e}t approach to the heat equation we obtain  the existence of $\theta$ only in the space $L^q(0,T,W^{1,q}(\Omega))$ for all $q<\frac{5}{4}$, see the Appendix. The lack of uniqueness of the temperature implies the  lack of uniqueness of the solution to the  whole system.
In order to find the class of solutions providing both existence and uniqueness one should consider the renormalized solutions to the heat equation, see \cite{BlanchardMurat,Blanchard}. However, the existing theory concerns mostly the homogeneous Dirichlet boundary-value problems. 
\end{uwaga}

The rest of the paper is organized as follows:  Section \ref{druga} is mostly dedicated to physical aspects of the problem. Therefore in Section \ref{sec:model} we introduce the complete  model and present the assumptions which brought us to the simplified setting. Then in Section \ref{comp} we concentrate on physical justification of the model after simplifications. Section \ref{sec:3} is only a technical part that prepares us to the proof of the main theorem, namely  we transform the problem to a homogeneous boundary-value problem.  
%%to
%%remove the boundary conditions to 
%%consider the homogeneous problem. 
The whole 
Section \ref{sec:proof} is devoted to the proof of Theorem \ref{thm:main2}. The subsequent subsections correspond to the steps of the proof such as  existence of the approximate solutions,  boundedness of the approximate solution and the behaviour of the  energy of the system. Finally we pass to the limit in the Galerkin approximations.

We complete the introduction by  introducing the notation. 
%Throughout this paper we use different symbols to denote the time derivative, e.g. $\theta_t=\frac{\partial\theta}{\partial t}$. 
As a result of  integration $\int_{t_1}^{t_2}\frac{d g}{d t} dt$ we write $g|_{t_1}^{t_2} $ which is equal to $g(t_2) - g(t_1)$. Furthermore, we denote by $L^p(\Omega)$ standard Lebesgue spaces, for $k,m\in\mathbb{N}$ and $1\leq p,q \leq \infty$, by $W^{k,p}(\Omega)$ the Sobolev spaces, by $W^{\frac{m}{k},p}(\partial\Omega)$ the fractional order Sobolev space and by $L^p(0,T,L^q(\Omega))$ Bochner spaces, by $C(K)$ continuous functions on $K$, by $C_c^{\infty}(K)$ compactly supported smooth functions on $K$.

\section{The physical model. Motivations and simplifications.}\label{druga}
We will start the  current section with formulating the full system describing the evolution of visco-elastic body including  thermal effects. Subsequently we describe the assumptions that were made due to simplify the system and motivate considering equations \eqref{full_system_2}. The second part concerns the issue of thermodynamical completeness of the considered system. This part essentially follows \cite{GKSG}. However, since this is an important argument for choosing this model, we include the main steps for completeness. In the last subsection we include the technical step which allows to reduce the problem to homogeneous boundary-value problem. 

\subsection{Origin of the model problem}
\label{sec:model}
Let us consider the system of equations in the bounded domain $\Omega\subset\mathbb{R}^3$ with a $C^1$ boundary $\partial \Omega$
\begin{align}
\varrho\vc{u}_{tt} - \div \ten{\sigma} &= \vc{f} & \mbox{in } \Omega\times(0,T),
\label{full_system1}
\\
\ten{\sigma} &= \ten{T} - \alpha(\theta-\theta_R)\ten{I} & \mbox{in } \Omega\times(0,T),
\label{full_system2}
\\
\ten{T} &= \ten{D}(\ten{\varepsilon}(\vc{u}) - \ten{\varepsilon}^{\bf p} ) & \mbox{in } \Omega\times(0,T),
\label{full_system3}
\\
\ten{\varepsilon}^{\bf p}_t &= \ten{G}(\theta,\ten{T}^d) & \mbox{in } \Omega\times(0,T),
\label{full_system4}
\\
\theta_t - \kappa\Delta\theta +\alpha(\theta-\theta_R)\div \vc{u}_t &= \ten{T}^d:\ten{G}(\theta,\ten{T}^d) + r & \mbox{in } \Omega\times(0,T).
\label{full_system5}
\end{align}

Derivation of this system was presented in \cite{GreenNaghdi}, \cite{GKSG} and \cite{LandauLifshitz}. The equation \eqref{full_system1} describes the balance of momentum. Equations \eqref{full_system2} and \eqref{full_system3} prescribe the constitutive relation for the Cauchy stress tensor and \eqref{full_system4} presents the constitutive relation for the evolution of the visco-elastic strain tensor. Finally, \eqref{full_system5} stands for the balance of energy.

The function $\ten{\sigma}:\Omega\times\mathbb{R}_+\rightarrow \mathcal{S}^3$ is the Cauchy stress tensor. The Cauchy stress tensor can be divided into two parts: mechanical and thermal. The mechanical part is $\ten{T}=\ten{D}(\ten{\varepsilon}(\vc{u}) - \ten{\varepsilon}^{\bf p})$, where the operator $\ten{D}:\mathcal{S}^3\rightarrow\mathcal{S}^3$ is linear, positively definite and bounded. 
Assuming that $\Omega$ is a homogeneous material, the operator $\ten{D}$ is a four-index matrix, i.e. $\ten{D}=\left\{d_{i,j,k,l}\right\}_{i,j,k,l=1}^3$ and the following equalities  hold
\begin{equation}
d_{i,j,k,l} = d_{j,i,k,l},
\quad
d_{i,j,k,l} = d_{i,j,l,k}
\quad
\mbox{and}
\quad
d_{i,j,k,l} = d_{k,l,i,j}
\quad
\forall i,j,k,l=1,2,3 .
\end{equation}
%{\color{red} Operator $\ten{D}$ Jak  to tu powinniśmy napisać, żeby było dobrze}
%, in space and time.  
%Tensor $\ten{\varepsilon}$ is a symmetric gradient of displacement $\ten{\varepsilon}=\frac{1}{2}\Big(\nabla \vc{u} +\nabla^T \vc{u}\Big)$.

%Density of external forces is denoted by $\vc{f}:\Omega\times\mathbb{R}_+\rightarrow \mathbb{R}^3$. 
The evolution of the visco-elastic strain tensor $\ten{\varepsilon}^{\bf p}$ is governed by the constitutive relation   $\ten{G}:\mathbb{R}_+\times\mathcal{S}^3_d\rightarrow\mathcal{S}^3_d$.
 %Latterly, some assumptions on the form of the function $\ten{G}(\theta,\ten{T}^d)$ appear. 
 The visco-elastic strain tensor $\ten{\varepsilon}^{\bf p}=(\ten{\varepsilon}^{\bf p})^d$ is traceless if $\ten{\varepsilon}^{\bf p}_0$ is traceless. The temperature $\theta_R$ is the reference temperature.
The function $r:\Omega\times\mathbb{R}_+\to\mathbb{R}_+$ describes a given density of heat sources, $\kappa:\Omega\times\mathbb{R}_+\to\mathbb{R}_+$ is  the material’s conductivity, which in the case of homogeneous materials is a positive constant, $\varrho$ is the constant density of the body. %(to simplify: $\kappa$ is equal to $1$) 
Moreover,  $\alpha$  describes the thermal expansion of the body. We will study the simplified situation, namely under the following assumptions 
%If considered material is thermally isotropic, then $\alpha$ is a given scalar function (or simply a constant) determined experimentally. In general, $\alpha$ is a positive definite matrix for thermally anisotropic materials. 
%
%We make some simplifications to solve this system.
\begin{ass}
We consider only the problem with small inertial force, i.e. $\varrho\vc{u}_{tt}=0$.
\end{ass}

\begin{ass}
 We assume that $\alpha=0$, i.e. the considered material is not subject to 
 the  thermal expansion. 
%In our future research we want to extend the results for the arbitrary positive constant $\alpha$.
\end{ass}

The fact of neglecting the acceleration term implies that the system of equations may be supplemented only by the initial conditions
\eqref{init_0}.
%\begin{equation}
%\left\{
%\begin{split}
%%\vc{u}(x,0)&=&\vc{u}_0(x),
%%\\
%\theta(x,0)&=\theta_0(x), 
%\\
%\ten{\varepsilon}(x,0)&=\ten{\varepsilon}^{\bf p}_0(x),
%\end{split}
%\right.
%\label{init_0}
%\end{equation}
%in $\Omega$.
%
% and boundary conditions
%\begin{equation}
%\left\{
%\begin{split}
%\vc{u}&=\vc{g}, \\
%\frac{\partial \theta}{\partial \vc{n}}&=g_{\theta},
%\end{split}
%\right.
%\label{boun_0}
%\end{equation}
%on $\partial\Omega\times(0,T)$.
%We do not need the initial condition for the displacement. The information about the displacement at the initial time we obtain by solving the equation for the balance of momentum at the time $t=0$. The function $\vc{f}$ is well defined at the initial time and the initial condition for the visco-elastic strain tensor is given.\marginpar{problem z ciagloscia f w zerze}
Moreover, we complete the system with boundary conditions \eqref{boun_0}. 
Using the Dirichlet condition for the displacement means that we control the shape of the body, and by using the Neumann condition for the temperature we control the flow of the energy through the boundary. 
%If we consider the homogeneous boundary data, the shape of the body is constant and there is no heat flux through the boundary.
%Boundary conditions $\vc{g}_{\vc{u}}$ and $g_{\theta}$ belong to $L^2(0,T,H^{\frac{1}{2}}(\partial\Omega))^3$ and $L^2(0,T,L^2(\partial\Omega))$. Initial condition $\theta_0$ and $\ten{\varepsilon}^{\bf p}_0$ belong to $L^1(\Omega)$ and $L^2(\Omega)^{3\times 3}$, respectively. Additionally, the initial temperature is nonnegative.

There are various simplifications that are proposed due to provide the mathematical analysis of the system. 
In the linear thermoelasticity, the term connected with thermal expansion in the heat equation is approximated by a linear one, i.e. $\alpha(\theta - \theta_0 )\div \vc{u}_t\approx \alpha_0 \div{} \vc{u}_t$ with the argumentation that the temperature $\theta$ in the considered process is close to the reference temperature, cf. Bartczak \cite{Bartczak}, Chełmiński and Racke \cite{ChR}. 
 
From the point of view of mathematical techniques used in the linear theory, such as e.g. linear semigroup theory, this approximation seems to be accurate. Unfortunately, in a consequence of this procedure one obtains the model which is not consistent with the physical principles.  
Our simplification follows different way,  we consider the case where no thermal expansion appears, hence $\alpha=0$. In the proceeding  section we discuss in detail the issue of thermodynamical completeness of the system after the simplifications. Finally, we also formulate the last assumption.

\begin{ass}
We assume that there are no heat sources in the system, hence $r\equiv 0$. 
%Moreover, we assume that there is an interaction between temperature and displacement and that the heat equation is linear. 
The material's conductivity $\kappa$ is  for simplicity equal to  $1$.
\end{ass}
%It is important to notice that the product $\ten{\varepsilon}^{\bf p}_t :\ten{T}$ is nonnegative and describes the dissipation in the considered model. In the monograph \cite{Alber} this property is called the thermodynamical admissibility of the inelastic constitutive relation.
Taking into account the above conditions we obtain the considered system \eqref{full_system_2}.

\subsection{Thermodynamical completeness}\label{comp}

The purpose of the current section is to underline  the physical advantages of the considered system. The  assumptions used in the construction of the simplified model  do not effect the  loss of physical properties, i.e. the system \eqref{full_system_2} still conserves the energy, the temperature is positive and there exists a function of state, namely the entropy, which has a positive rate of production. We shall say that the system is thermodynamically complete if these properties are satisfied.
 In \cite{GKSG} we showed the thermodynamical completeness of   the system \eqref{full_system1}--\eqref{full_system5} in the case it is  isolated,  i.e.  $\vc{f}=0$, we assume homogeneous boundary values and there are no  heat sources ($r=0$). %\eqref{full_system1}--\eqref{full_system5}. 
%Similar results hold for full system, i.e. non quasi-static case with thermal expansion, see \cite{GKSG}. To show the thermodynamically completeness we consider an isolated system, i.e. system with $\vc{f}=0$, homogeneous boundary values and without any heat sources ($r=0$). 
All of the calculation in this section are formal.

\noindent
{\it Conservation of total energy} 

\noindent
In the first step we intend to show that the global energy is preserved. Multiplying the first equation of system \eqref{full_system_2} by $\vc{u}_t$ and integrating over an arbitrary set $\mathcal{O}\subset\Omega$, we obtain
\begin{equation}
%\int_{\mathcal{O}}\rho\vc{u}_{tt}\cdot\vc{u}_t  \dx 
-
\int_{\mathcal{O}}{\div}\ten{T}\cdot\vc{u}_t \dx 
= 0
%=\int_{\mathcal{O}}\vc{f} \dx
\end{equation}
and hence
\begin{equation}
%\frac{1}{2}\int_{\mathcal{O}}\rho\frac{d}{dt}|\vc{u}_{t}|^2  \dx +
\int_{\mathcal{O}}\ten{T}:\nabla\vc{u}_t \dx 
- \int_{\partial\mathcal{O}}\ten{T}\vc{n}\cdot\vc{u}_t \ds 
= 0.
%=\int_{\mathcal{O}}\vc{f} \dx .
\label{rownanie_powyzej}
\end{equation}
%Using definition of Cauchy stress tensor we get
%\begin{equation}
%\begin{split}
%\frac{1}{2}\frac{d}{dt}\int_{\mathcal{O}}\rho|\vc{u}_{t}|^2 \dx+
%\int_{\mathcal{O}}\big(\ten{T}:\nabla\vc{u}_t-\alpha(\theta-\theta_R){\div}\vc{u}_t \big) \dx 
%\quad &
%\\
%- \int_{\partial\mathcal{O}}\big(\ten{T}\vc{n}\cdot\vc{u}_t - \alpha(\theta-\theta_R)\vc{n}\cdot\vc{u}_t\big) \ds
%= &0 .
%= & \int_{\mathcal{O}}\vc{f} \dx .
%\end{split}
%\label{rownanie_powyzej}
%\end{equation}
We multiply the evolutionary equation for the visco-elastic strain by $\ten{T}$ and integrate over~$\mathcal{O}$. Subtracting this equation from \eqref{rownanie_powyzej} implies that
\begin{equation}
\begin{split}
\int_{\mathcal{O}}\big(\ten{T}:\nabla\vc{u}_t-
\ten{T}:\ten{\varepsilon}^{\bf p}_t\big)
 \dx 
- \int_{\partial\mathcal{O}}\ten{T}\vc{n}\cdot\vc{u}_t  \ds
%\\
%& \qquad
= 
%\int_{\mathcal{O}}\vc{f} \dx
- \int_{\mathcal{O}}\ten{T}^d:\ten{G} \dx .
\end{split}
\end{equation}
%The symmetry of $\ten{T}$ implies that
%%Rewriting $\nabla\vc{u}_t$ as a symmetric gradient $\ten{\varepsilon}_t$, we get 
%\begin{equation}
%\begin{split}
%\int_{\mathcal{O}}\ten{T}:(\ten{\varepsilon}(\vc{u}) - \ten{\varepsilon}^{\bf p})_t \dx  
%- \int_{\partial\mathcal{O}}\ten{T}\vc{n}\cdot\vc{u}_t  \ds
%= 
%%\int_{\mathcal{O}}\vc{f} \dx
%- \int_{\mathcal{O}}\ten{T}^d:\ten{G} \dx .
%\end{split}
%\end{equation}
Finally, using the symmetry of $\ten{T}$ we obtain 
\begin{equation}
\begin{split}
\frac{1}{2}\frac{d}{dt}\int_{\mathcal{O}}\ten{T}:(\ten{\varepsilon}(\vc{u}) -\ten{\varepsilon}^{\bf p}) \dx  
- \int_{\partial\mathcal{O}}\ten{T}\vc{n}\cdot\vc{u}_t  \ds
%\\
%&\qquad
= 
%\int_{\mathcal{O}}\vc{f} \dx
- \int_{\mathcal{O}}\ten{T}^d:\ten{G} \dx .
\end{split}
\label{enrgia}
\end{equation}
Since the global energy of the set $\mathcal{O}$ is equal to $\mathcal{E}_{\mathcal{O}}(\tau)=\int_{\mathcal{O}}e(x,\tau)\dx $ and the density of the total energy is defined by $e(x,\tau)=\theta +\frac{1}{2}\ten{D}^{-1}\ten{T}:\ten{T} $, we obtain
%Using the definition of the global energy 
\begin{equation}
\mathcal{E}_{\mathcal{O}}(t)=%\frac{1}{2}\int_{\mathcal{O}}\rho|\vc{u}_{t}|^2(t) \dx +
\int_{\mathcal{O}}\theta(t) \dx +
\frac{1}{2}\int_{\mathcal{O}}\ten{T}:(\ten{\varepsilon}(\vc{u})-\ten{\varepsilon}^{\bf p})(t) \dx .
\end{equation}
Consequently equation \eqref{enrgia} may be written in the following from
\begin{equation}
\begin{split}
\frac{d}{dt}\mathcal{E}_{\mathcal{O}}(t) & = 
\frac{d}{dt}\int_{\mathcal{O}}\theta \dx  
%\int_{\mathcal{O}}\alpha(\theta-\theta_R){\div}\vc{u}_t \dx 
%+ \int_{\mathcal{O}}\vc{f} \dx
- \int_{\mathcal{O}}\ten{T}^d:\ten{G} \dx 
+ \int_{\partial\mathcal{O}}\ten{T}\vc{n}\cdot\vc{u}_t  \ds .
\end{split}
\end{equation}
Using \eqref{full_system_2}$_4$, we obtain
\begin{equation}
\begin{split}
\frac{d}{dt}\mathcal{E}_{\mathcal{O}}(t) &= 
\int_{\mathcal{O}}\theta_t \dx
%+ \int_{\mathcal{O}}\vc{f} \dx
- \int_{\mathcal{O}}\theta_t \dx
+\int_{\mathcal{O}}\Delta\theta \dx
%+ \int_{\mathcal{O}}r \dx
+ \int_{\partial\mathcal{O}}\ten{T}\vc{n}\cdot\vc{u}_t \ds
\\
& =
\int_{\partial\mathcal{O}}\big(\ten{T}\vc{u}_t +\nabla\theta\big)\cdot\vc{n} \ds .
\end{split}
\end{equation}
Zero external forces, homogeneous boundary conditions and no heat sources implies that $\vc{u}_t=0$ and $\nabla\theta\cdot\vc{n}=0$ on the boundary $\partial\Omega$. Therefore, the global energy $\mathcal{E}_{\Omega}$ is constant in time.
% $t$ is equal to the global energy in the initial time $t=0$.

%\noindent
%{\it Positivity of temperature} 

%\noindent

%Before we consider the second law of thermodynamics let us observe for later use that with help of standard calculations we obtain 
%the positivity of the temperature $\theta - \theta_R$. 

%Defining $\tilde{\theta}: =\theta - \theta_R$ we rewrite the heat equation in terms of $\tilde{\theta}$

%The same result can be obtained using the same calculus but by multiplying the $\vc{u}_t$ and $\ten{T}$ by proper mollifiers and characteristic functions.

%This approximation is important due to the used mathematical techniques in the linear theory but it is physically incorrect. The system with this assumption is not thermodynamically consistent. If the system is thermodynamically consistent, the external forces acting on the body and the boundary data vanish, the internal energy of the system is preserved. 

%We believe that the physical model should have the physical meaning.
%In majority of papers is assumed that $\alpha (\theta-\theta_0)$ in heat equation is equal to constant $\alpha$. There is no dependency of the temperature. We can interpret this assumption in the following way: the temperature is closed to the reference state and has small changes. This approach causes that the final system of equations is linear but the system is not thermodynamically closed which, in turn challenges the physical senses of such models. The models which are not thermodynamically closed were considered by XXXXXXXXXXXXXXXX.

\noindent
{\it Positivity of the  temperature} 

\noindent
%Before we consider the second law of thermodynamics let us define $\tilde{\theta}: =\theta - \theta_R$. 
Let us assume that the initial temperature $\theta_0$ is positive. %greater than the reference temperature $\theta_R$. 
The heat equation after simplifications has a form
%We rewrite \eqref{full_system5} in terms of $\tilde{\theta}$ 
\begin{equation}
\theta_t - \Delta\theta  =
\ten{G}(\theta,\ten{T}^d):\ten{T}^d .
\label{eq:nee_heat}
\end{equation} 
Hence, the assumptions on the function $\ten{G}(\cdot,\cdot)$ imply that the right hand side of \eqref{eq:nee_heat} is positive, namely
\begin{equation}
\theta_t - \Delta\theta \geq 0.
\end{equation}
When the initial and boundary conditions for the temperature are positive, then the temperature $\theta$ is positive.

\noindent
{\it Entropy inequality} 

\noindent
%Let us consider the second law of thermodynamics. Using the same technique as in the positivity of temperature we have
Multiplying \eqref{eq:nee_heat} by $1/\theta$ and integrating over an arbitrary set $\mathcal{O}\subset\Omega$, we obtain
\begin{equation}
\begin{split}
\frac{d}{dt}\int_{\mathcal{O}}\ln\theta \dx
 -  \int_{\mathcal{O}}{\div}\frac{\nabla \theta}{\theta} \dx
- \int_{\mathcal{O}}\frac{|\nabla\theta|^2}{\theta^2} \dx
%+ &\int_{\mathcal{O}} \alpha \div \vc{u}_t \dx
 = 
\int_{\mathcal{O}} \frac{\ten{G}(\theta,\ten{T}^d):\ten{T}^d}{\theta} \dx .
%+ \int_0^{\tau}\int_{\Omega} r \dxdt .
\end{split}
\nonumber
\end{equation}
Thus
\begin{equation}
\begin{split}
\frac{d}{dt}\int_{\mathcal{O}} \ln\theta 
 \dx
+ &\int_{\mathcal{O}}{\div}\Big(\frac{\vc{q}}{\theta}\Big) \dx
= \int_{\mathcal{O}} \frac{\ten{G}(\theta,\ten{T}^d):\ten{T}^d}{\theta} \dx
% +\int_0^{\tau}\int_{\Omega}r \dxdt
+ \int_{\mathcal{O}}\frac{|\nabla\theta|^2}{\theta^2} \dx .
\end{split}
\label{eq:ent_pow}
\end{equation}
By the properties of  the function $\ten{G}(\cdot,\cdot)$ and positivity of $\theta$,   the right hand side of \eqref{eq:ent_pow} is positive. 
%Positivity of the right hand side follows from the assumption on the function $\ten{G}(\cdot,\cdot)$ and positivity of $\theta$ and $\kappa$. 
Therefore, an arbitrary choice of the domain $\mathcal{O}$ implies that the inequality holds
%\begin{equation}
%\frac{d}{dt} \int_{\mathcal{O}} c\ln\theta 
% \dx
%+ \int_{\mathcal{O}}{\div}\Big(\frac{\vc{q}}{\theta}\Big) \dx \geq 0.
%\end{equation}
%Arbitrary choice of the domain $\mathcal{O}$ implies that this inequality is tantamount to
\begin{equation}
\Big( \ln\theta\Big)_t 
+ {\div}\Big(\frac{\vc{q}}{\theta}\Big) \geq 0.
\end{equation}
 The above  relation is the so-called Clausius-Duhem inequality and it is one of the equivalent formulations of the second principle of thermodynamics. Hence, the homogeneous boundary conditions and the  definition of the  heat flux ($\vc{q}=-\nabla \theta$) implies that %,  we obtain
\begin{equation}
\frac{d}{dt}\int_{\Omega} \ln\theta \geq 0 .
\end{equation}
%The rate of entropy production is positive because of the two reasons. First reason is the growth of the function $\ten{G}(\cdot,\cdot)$. We know that $\ten{G}(\theta,\ten{T}^d):\ten{T}^d$ is bounded from below by $\beta|\ten{T}^d|^2$. Second one is the usage of the Fourier law for the heat flux. This two assumption together with the definition of internal energy imply that entropy rate production looks like this 
%\begin{equation}
%\frac{\ten{G}(\theta,\ten{T}^d):\ten{T}^d}{\theta-\theta_R} +\kappa\frac{|\nabla\theta|^2}{(\theta-\theta_R)^2},
%\end{equation}
%and it is positive.
%We assume that $\alpha=0$ and considered system are thermodynamically consistent. In the future work we want to extend our results for the arbitrary positive constant~$\alpha$.
Note that $\eta(\theta) = \ln\theta$ is one of the admissible entropies for system \eqref{full_system_2} what furnishes a formal justification for the thermodynamical completeness of the model. For the situation with linearization of the term $\alpha(\theta-\theta_R){\div} \vc{u}_t$ one can show that none of the thermodynamical principles is fulfilled. 

\subsection{Transformation to a homogeneous boundary-value problem}
\label{sec:3}

Our aim is to reduce the problem to a homogeneous one. 
For this purpose we are interested in a  decoupled elastic systems and a heat equation.  The first system is subject to  the same external forces as problem \eqref{full_system_2} and both of the problems are complemented     with the same boundary conditions as  \eqref{full_system_2}.
Hence, given  $\tilde{\theta}_0\in L^2(\Omega)$ we study
\begin{equation}
\left\{
\begin{array}{rcll}
-\div \tilde{\ten{T}} &=& \vc{f} & \mbox{in } \Omega\times (0,T), \\
\tilde{\ten{T}} &=& \ten{D}\ten{\varepsilon}(\tilde{\vc{u}}) & \mbox{in } \Omega\times (0,T), \\
\tilde{\vc{u}} &=& \vc{g} & \mbox{on } \partial\Omega\times (0,T), 
%\tilde{\vc{u}}(x,0) &=& \tilde{\vc{u}}_0 & \mbox{in } \Omega. \\
\end{array}
\right.
\label{war_brz_u}
\end{equation}
and
\begin{equation}
\left\{
\begin{array}{rcll}
\tilde{\theta}_t -\Delta \tilde{\theta} &=& 0 & \mbox{in } \Omega\times (0,T), \\
\frac{\partial\tilde{\theta}}{\partial\vc{n}} &=& g_{\theta} & \mbox{on } \partial\Omega\times (0,T), \\
\tilde{\theta}(x,0) &=& \tilde{\theta}_0 & \mbox{in } \Omega.
\end{array}
\right.
\label{war_brz_t}
\end{equation}

%{\em Czy powinniśmy dodać, że dane początkowe muszą być kompatybizne z warunkiem brzegowym?}

\begin{lemat}
Let $\tilde{\theta}_0 \in L^2(\Omega)$, $\vc{g} \in L^p(0,T, W^{1-\frac{1}{p},p}(\partial\Omega,\mathbb{R}^3))$, $g_{\theta} \in L^2(0,T,L^2(\partial\Omega))$ and moreover  $\vc{f}\in L^p(0,T,W^{-1,p}(\Omega,\mathbb{R}^3))$. Then there exists a solution to systems \eqref{war_brz_u} and \eqref{war_brz_t}. Additionally, the following estimates hold:
\begin{equation}
\begin{split}
\|\tilde{\vc{u}}\|_{L^p(0,T,W^{1,p}(\Omega))} 
& \leq 
C_1 \left(\|\vc{g}\|_{L^p(0,T, W^{1-\frac{1}{p},p}(\partial\Omega,\mathbb{R}^3))}+ 
\|\vc{f}\|_{L^p(0,T,W^{-1,p}(\Omega))} \right),
 \\
\|\tilde{\theta}\|_{L^{\infty}(0,T,L^1(\Omega))}  + \|\tilde{\theta}\|_{L^2(0,T,W^{1,2}(\Omega))} 
& \leq 
C_2 \left(\|g_{\theta}\|_{L^2(0,T,L^2(\partial\Omega))}+\|\tilde{\theta}_0\|_{L^2(\Omega)} \right).
\nonumber
\end{split}
\end{equation}
\label{wyrzucenie_war_brzeg}
%{\color{red} Moreover, $\tilde{\vc{u}}$ belongs to $C([0,T],L^{p'}(\Omega))$ and $\theta$ belongs to $C([0,T],L^2(\Omega))$.}
Moreover, $\theta$ belongs to $C([0,T],L^2(\Omega))$.
\end{lemat}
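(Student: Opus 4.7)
The two systems \eqref{war_brz_u} and \eqref{war_brz_t} are completely decoupled, so I would treat them separately. Both are classical and essentially linear, so the plan is to invoke standard theory for each and then check that the quoted estimates fall out of the corresponding a priori bounds.

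For the elastic system \eqref{war_brz_u} I would work pointwise in $t$, since no time derivative appears. Lift the boundary datum: pick $\vc{g}^* \in L^p(0,T, W^{1,p}(\Omega,\mathbb{R}^3))$ with $\vc{g}^*|_{\partial\Omega} = \vc{g}$ and $\|\vc{g}^*\|_{L^p(0,T,W^{1,p}(\Omega))} \le C \|\vc{g}\|_{L^p(0,T, W^{1-1/p,p}(\partial\Omega))}$ by standard trace/extension theorems. Setting $\vc{w} = \tilde{\vc{u}} - \vc{g}^*$ reduces the problem, for a.e. $t$, to
\[
-\div\bigl(\ten{D}\ten{\varepsilon}(\vc{w})\bigr) = \vc{f} + \div\bigl(\ten{D}\ten{\varepsilon}(\vc{g}^*)\bigr) \text{ in } \Omega, \qquad \vc{w} = 0 \text{ on } \partial\Omega.
\]
This is a Lamé-type elliptic system with constant, symmetric, positive-definite coefficients on a $C^1$ domain, so the $L^p$-regularity theory for linear elliptic systems (Agmon--Douglis--Nirenberg, or the Lamé estimates in Ne\v{c}as--Hlav\'a\v{c}ek) combined with Korn's inequality gives a unique $\vc{w}(t) \in W^{1,p}_0(\Omega,\mathbb{R}^3)$ with $\|\vc{w}(t)\|_{W^{1,p}} \le C\bigl(\|\vc{f}(t)\|_{W^{-1,p}} + \|\vc{g}^*(t)\|_{W^{1,p}}\bigr)$. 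Raising to the $p$-th power, integrating over $(0,T)$ and adding back $\vc{g}^*$ yields the first stated bound. Measurability in $t$ is immediate from the linearity of the solution operator.

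For the heat equation \eqref{war_brz_t} I would use a Galerkin scheme based on eigenfunctions of the Neumann Laplacian (or equivalently semigroup theory). The basic energy estimate is obtained by testing with $\tilde{\theta}$ itself:
\[
\frac{1}{2}\frac{d}{dt}\int_\Omega \tilde{\theta}^2 \dx + \int_\Omega |\nabla\tilde{\theta}|^2 \dx = \int_{\partial\Omega} g_\theta \tilde{\theta} \ds.
\]
The boundary term is bounded via the trace inequality $\|\tilde{\theta}\|_{L^2(\partial\Omega)} \le C\|\tilde{\theta}\|_{W^{1,2}(\Omega)}$ followed by Young's inequality, absorbing the gradient part into the left-hand side. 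Integrating in time gives $\tilde{\theta} \in L^\infty(0,T,L^2(\Omega)) \cap L^2(0,T,W^{1,2}(\Omega))$ with norm controlled by $\|g_\theta\|_{L^2(0,T,L^2(\partial\Omega))} + \|\tilde{\theta}_0\|_{L^2(\Omega)}$; the stated $L^\infty_t L^1_x$ bound follows a fortiori from $L^\infty_t L^2_x$ on the bounded domain $\Omega$.

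Finally, continuity $\tilde{\theta} \in C([0,T],L^2(\Omega))$ is the classical Lions--Magenes embedding: from the equation and the above bounds one reads off $\tilde{\theta}_t \in L^2(0,T,(W^{1,2}(\Omega))^*)$ (the boundary term enters as an element of this dual via the trace), so $\tilde{\theta} \in L^2(0,T,W^{1,2}(\Omega))$ together with $\tilde{\theta}_t \in L^2(0,T,(W^{1,2}(\Omega))^*)$ forces $\tilde{\theta} \in C([0,T],L^2(\Omega))$. No step here is really an obstacle; the only point requiring some care is justifying the $L^p$-estimate for the elasticity problem with non-homogeneous boundary data in the full range $p \ge 2$, which is why the lifting argument and an explicit appeal to $L^p$ elliptic regularity for the Lamé system are the one place where one must be precise rather than merely invoke Lax--Milgram.
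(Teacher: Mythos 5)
Your proof is correct and essentially matches the paper's (brief) argument: for the elastic system you lift the boundary datum via a trace/extension operator and reduce to a homogeneous Dirichlet problem for which you invoke $L^p$ elliptic regularity, which is exactly what the paper does through Valent's trace theorem and Corollary 4.4, and for the heat equation you supply the standard $L^2$ energy estimate, trace plus Young on the boundary term, and the Lions--Magenes embedding for continuity, all of which the paper leaves implicit. No gaps.
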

\begin{uwaga}
%Now, without loss of generality, we can take $\vc{g}=0$. In this is not the case, then 
From the trace theorem \cite[Chapter II]{Valent} there exist $\tilde{\vc{g}}\in L^{p}(0,T,W^{1,p}(\Omega,\mathbb{R}^3))$ such that $\tilde{\vc{g}}|_{\partial\Omega}=\vc{g}$. Then, finding the solution $\tilde {\vc{u}}$ to \eqref{war_brz_u} is equivalent to finding the solution  $\tilde{\vc{u}}_1$ to the following problem 
%
%One immediately observes that instead of \eqref{war_brz_u} it is possible to consider the problem with homogeneous boundary conditions. 
%Rewriting the solution in the form $\tilde{\vc{u}} = \tilde{\vc{u}}_1+\vc{g}$, finding $\tilde{\vc{u}}$ is equivalent to finding $\tilde{\vc{u}}_1$, where $\tilde{\vc{u}}_1$ is a solution to the system
\begin{equation}
\left\{
\begin{array}{rcll}
-{\div} \ten{D}\ten{\varepsilon}(\tilde{\vc{u}}_1) &=& \vc{f} +{\div} \ten{D}\ten{\varepsilon}(\vc{\tilde{g}}) & \mbox{in } \Omega\times (0,T), \\
%\tilde{\ten{T}} &=& \ten{D}\ten{\varepsilon}(\tilde{\vc{u}}) & \mbox{in } \Omega\times (0,T), \\
\tilde{\vc{u}}_1 &=& 0 & \mbox{on } \partial\Omega\times (0,T),
%\tilde{\vc{u}}(x,0) &=& \tilde{\vc{u}}_0 & \mbox{in } \Omega. \\
\end{array}
\right.
\label{war_brz_u_0}
\end{equation}
and $\tilde{\vc{u}} = \tilde{\vc{u}}_1 + \tilde{\vc{g}}$. Using \cite[Corollary 4.4]{Valent}, we obtain the estimates presented in Lemma \ref{wyrzucenie_war_brzeg}.
\end{uwaga}

%{\color{red} Rozwiązanie problemu przewodnictwa cieplna nie potrzebuje wyjaśnienia. Funkcja $\vc{g}$ jest zdefiniowana na całym obszarze, wtedy rozwiązanie $\tilde{\vc{u}}$ możemy przedstawić jako sumę $\tilde{\vc{u}}=\vc{u}_1 + \vc{g}$ gdzie
%\begin{equation}
%\begin{array}{cl}
%-{\div}\ten{D}\ten{\varepsilon}(\vc{u}_1) = \vc{f} + {\div}\ten{D}\ten{\varepsilon}(\vc{g})
%& \mbox{in } \Omega\times (0,T),
%\\
%\vc{u}_1=0
%& \mbox{on } \partial\Omega\times (0,T),
%\end{array}
%\end{equation}
% }

%The proof is standard, e.g. see \cite{}.{\color{red} alber chełmiński}

%{\em Temperatura wychodzi prosto. Najpierw całkujemy bez funckji testowej i otrzymujemy oszacowanie $L^{\infty}(L^1)$. Później całkując z funkcją testową, będącą rozwiązaniem oraz korzystając z nierówności Poincare i Younga otrzymujemy szukane szacowanie. Dodatkowo potrzebujeym wykorzystać oszacowanie pomiędzy wartością funkcji na brzegu i wartością wewnątrz obszaru. Stała $C_2$ zależy od stałej z nierówności Poincare}
%{\em Teraz odkształcenie: coś jak rozszerzenie harmoniczne?}

Instead of finding  $(\widehat{\vc u}, \widehat{\theta})-$ the solution to problem \eqref{full_system_2}-\eqref{init_0}-\eqref{boun_0}
we shall search for $(\vc{u}, \theta)$, where $\vc{u}=\widehat{\vc{u}}-\tilde{\vc{u}}$ and $\theta=\widehat{\theta}-\tilde{\theta}$ and $(\tilde{\vc{u}},\tilde{\theta})$ solve \eqref{war_brz_u} with ${\vc{g}}=0$ and \eqref{war_brz_t}. Furthermore, we get

%As we can see, the solution of initial boundary problem \eqref{full_system_2} is a sum of two functions: $\widehat{\vc{u}}=\tilde{\vc{u}}+\vc{u}$ and $\widehat{\theta}=\tilde{\theta}+\theta$. Functions $\tilde{\vc{u}}$ and $\tilde{\theta}$ are solution of the problem defined above. %have the boundary values and $\widehat{\vc{u}}$ and $\widehat{\theta}$ have the zero boundary value. 

\begin{equation}
\left\{
\begin{split}
- {\div} \ten{T} = - {\div} (\widehat{\ten{T}} - \tilde{\ten{T}}) & =  0 ,
\\
\ten{T} & =  \ten{D}(\ten{\varepsilon}(\vc{u}) - \ten{\varepsilon}^{\bf p} ),
\\
\ten{\varepsilon}^{\bf p}_t & =  \ten{G}(\widehat{\theta},\widehat{\ten{T}}^d) 
\\
& = \ten{G}(\theta + \tilde{\theta},\ten{T}^d+\tilde{\ten{T}}^d),
\\
\theta_t - \Delta \theta = (\widehat{\theta} - \tilde{\theta})_t - \Delta(\widehat{\theta} - \tilde{\theta}) & = \widehat{\ten{T}}^d:\ten{G}(\theta + \tilde{\theta},\ten{T}^d+\tilde{\ten{T}}^d) 
\\
& = \big(\ten{T}^d + \tilde{\ten{T}}^d\big):\ten{G}(\theta + \tilde{\theta},\ten{T}^d+\tilde{\ten{T}}^d).
\end{split}
\right.
\label{full_system_22a}
\end{equation}
Hence, we consider the problem
\begin{equation}
\left\{
\begin{split}
- {\div} \ten{T} & =  0 ,
\\
\ten{T} & = \ten{D}(\ten{\varepsilon}(\vc{u}) - \ten{\varepsilon}^{\bf p} ),
\\
\ten{\varepsilon}^{\bf p}_t & =  \ten{G}(\theta + \tilde{\theta}, \ten{T}^d + \tilde{\ten{T}}^d),
\\
\theta_t - \Delta \theta & =   \big(\ten{T}^d + \tilde{\ten{T}}^d\big):\ten{G}(\theta + \tilde{\theta}, \ten{T}^d + \tilde{\ten{T}}^d),
\end{split}
\right.
\label{full_system_22}
\end{equation}
%We can decompose our solution into two part: first which take from primary problem the boundary conditions and second which have zero boundary conditions, but have nonzero right hand side of equations. Let denote $\vc{u} = \tilde{\vc{u}} + \widehat{\vc{u}}$ and $\theta = \tilde{\theta} + \widehat{\theta}$. Using Lemma \ref{wyrzucenie_war_brzeg} we get existence $\tilde{\vc{u}}$ and $\tilde{\theta}$ 
%We get the existence of $\tilde{\vc{u}}$ and $\tilde{\theta}$. 
%Using Lemma \ref{wyrzucenie_war_brzeg} we get the existence of $\tilde{\vc{u}}$ and $\tilde{\theta}$ which   following system of equations
%\begin{equation}
%\left\{
%\begin{array}{rcl}
%- \div \ten{T} = - \div \widehat{\ten{T}} &=& \vc{f} ,
%\\
%\widehat{\ten{T}} &=& \ten{D}(\ten{\widehat{\varepsilon}} - \ten{\varepsilon}^{\bf p} ),
%\\
%\ten{\varepsilon^{\bf p}}_t &=& \ten{G}(\widehat{\theta}+\tilde{\theta},\widehat{\ten{T}}^d+\tilde{\ten{T}}^d)=\ten{G}(\theta,\ten{T}^d),
%\\
%\widehat{\theta}_t - \Delta\widehat{\theta} &=& \big(\widehat{\ten{T}}^d+\tilde{\ten{T}}^d\big):\ten{\varepsilon}^{\bf p}_t = \ten{T}^d:%\ten{\varepsilon}^{\bf p}_t,
%\end{array}
%\right.
%\label{full_system_22}
%\end{equation}
with the initial and boundary conditions
\begin{equation}
\left\{	
\begin{array}{rcll}
\vc{u} &=& 0 & \mbox{on } \partial\Omega\times (0,T), \\
\frac{\partial\theta}{\partial \vc{n}} &=& 0 & \mbox{on } \partial\Omega\times (0,T), \\
%\widehat{\vc{u}} &=& \vc{u}_0 - \tilde{\vc{u}}_0 = \widehat{\vc{u}}_0 & \mbox{in } \Omega \\
\theta(\cdot,0) &=& \widehat{\theta}_0 - \tilde{\theta}_0 \equiv \theta_0 & \mbox{in } \Omega, \\
\ten{\varepsilon}^{\bf p}(\cdot,0) &=& \ten{\varepsilon}^{\bf p}_0 & \mbox{in } \Omega,
\end{array}
\right.
\label{in_bou_cond}
\end{equation}
where $\widehat{\theta}_0$ is the given initial condition for the temperature and 
$\tilde{\theta}_0$ is the initial condition for the system \eqref{war_brz_t}.
%Hence we search 

%After splitting solution into two parts, we know that $\tilde{\vc{u}}$ and $\tilde{\theta}$ exist and belong to proper spaces. It remains to found $\vc{u}$ and $\theta$.

%\begin{uwaga}{\bf Important remark!!!}

%As we can see, solution of our initial boundary problem is superposition of two functions: $\vc{u}=\tilde{\vc{u}}+\widehat{\vc{u}}$ and $\theta=\tilde{\theta}+\widehat{\theta}$. Function, which have the boundary value, belongs to {\em good space} (we showed this). %In the rest of the paper we show that $\widehat{\vc{u}}$ and $\widehat{\theta}$ exist and belong to {\em good spaces}. Furthermore, in the rest of the paper we denote the pair $(\widehat{\vc{u}}, \widehat{\theta})$ by the $(\vc{u},\theta)$. This change will improve the readability of a paper. 
%\end{uwaga}

%\begin{uwaga}{\bf 2nd important remark!!!}
\begin{uwaga}
From the proof provided in Section \ref{sec:proof} it follows that the displacement $\ten{u}$, which is a solution to the homogeneous problem belongs to the space $C([0,T],L^{p'}(\Omega))$. However, in Theorem \ref{thm:main2}
the information on the continuity of the solution to the nonhomogeneous  problem does not appear. This is the consequence of the the fact that $\tilde{\ten{u}}$ may fail to be continuous under the assumptions that we have for the volume force $\ten{f}$ and boundary data.  
%Transforming the system of equations into homogeneous boundary problem we represent displacement as a sum of solutions of two independent problems. Displacement construct by Galerkin approximation belongs to $C([0,T],L^{p'}(\Omega))$, but assumptions of Theorem \ref{thm:main2} do not guarantee the continuity of displacement obtained by the removing volume function and Dirichlet boundary condition.  
\end{uwaga}

\section{Proof of Theorem  \ref{thm:main2}}\label{sec:proof}
\subsection{Approximate solutions}
\label{sec:4}
Let $k\in{\mathbb N}$ and $\mathcal{T}_k(\cdot)$ be a standard truncation operator
\begin{equation}
\mathcal{T}_k(x)=\left\{
\begin{split}
k \qquad & x> k \\
x \qquad & |x|\leq k \\
-k \qquad & x <-k.
\end{split}
\right.
\label{Tk}
\end{equation}

%\marginpar{poprawic ten akapit}

%The main reason  for this procedure is the 

We are facing the problem of low regularity  of the right hand side of the heat equation and the initial condition. Both functions are only integrable what enforces using  some delicate methods, such as the approach of  Boccardo and Gallou\"{e}t, cf. \cite{Boccardo}, for showing  the existence of solutions. An essential step is testing the equation with the truncation of solution. However, this truncation need not to be a linear combination of basis functions. This is the reason why  we use two level approximation, i.e. independent parameters of approximation in the displacement and temperature.
%
%
%We need to have the sequence of approximative solutions, which would not be {\color{blue} a combination of basis functions}. Truncation of function from finite dimension space may not belong to this space, 
We pass to the limit, firstly with parameter $l$ corresponding to the dimension of the Galerkin basis for the temperature to get the sequence of infinite dimensional approximate solutions.  Passing to the limit with parameter $k$ corresponding to the dimension of the Galerkin basis for the displacement requires closer attention.

 We construct the approximated system using the Galerkin method. Consider the space $L^2(\Omega,\mathcal{S}^3)$ with a scalar product defined
 \begin{equation}
(\ten{\xi},\ten{\eta})_{\ten{D}}:=  \int_\Omega {\ten{D}}^\frac{1}{2}\ten{\xi}\cdot {\ten{D}}^\frac{1}{2}\ten{\eta} \dx %( {\ten{D}}^\frac{1}{2}\ten{\xi}, {\ten{D}}^\frac{1}{2}\ten{\eta})
\quad\mbox{for }\ten{\xi},\ten{\eta}\in L^2(\Omega,\mathcal{S}^3)
 \end{equation}
 where ${\ten{D}}^\frac{1}{2}\circ{\ten{D}}^\frac{1}{2}=\ten{D}$. %and $\big(\cdot,\cdot\big)$ denotes the inner product in $L^2(\Omega,\mathcal{S}^3)$.
Let $\{\vc{w}_i\}_{i=1}^{\infty}$ be the set of eigenfunctions of the operator $-\div\ten{D}\ten{\varepsilon}(\cdot)$ with the domain $W_0^{1,2}(\Omega,\mathbb{R}^3)$ and  $\{ \lambda_i \}$ be the corresponding eigenvalues   such that $\{\vc{w}_i\}$ is orthogonal in $W^{1,2}_0(\Omega,\mathbb{R}^3)$ with the inner product
\begin{equation}
( \vc{w}, \vc{v})_{W^{1,2}_0(\Omega)}=( \ten{\varepsilon}(\vc{w}), \ten{\varepsilon}(\vc{v}))_{\ten{D}}
\end{equation}
%\begin{equation}
%\|\ten{\varepsilon}(\vc{w})\|^2_{\ten{D}}=
%%\|\vc{w}\|_{L^2(\Omega)}^2 + 
%\|\ten{D}^{\frac{1}{2}}\ten{\varepsilon}(\vc{w})\|_{L^2(\Omega)}^2,
%\end{equation}
and orthonormal in $L^2(\Omega,\mathbb{R}^3)$. 
Hence
\begin{equation}
\|\ten{\varepsilon}(\vc{w})\|^2_{\ten{D}}=( \ten{\varepsilon}(\vc{w}), \ten{\varepsilon}(\vc{v}))_{\ten{D}}.
%\|\vc{w}\|_{L^2(\Omega)}^2 + 
%\|\ten{D}^{\frac{1}{2}}\ten{\varepsilon}(\vc{w})\|_{L^2(\Omega)}^2,
\end{equation}
Using the eigenvalue problem for the operator $-\div\ten{D}\ten{\varepsilon}(\cdot)$ we obtain
\begin{equation}
\int_{\Omega}\ten{D}\ten{\varepsilon}(\vc{w}_i):\ten{\varepsilon}(\vc{w}_j) \dx = \lambda_i \int_{\Omega}\vc{w}_i\cdot\vc{w}_j \dx = 0
\end{equation} 
Moreover, let $\{v_i\}_{i=1}^\infty$ be the set of  eigenfunctions of the  Laplace operator with the domain $W^{1,2}_n(\Omega)=\{ v\in W^{1,2}(\Omega):\quad \frac{\partial v}{\partial\vc{n}} = 0 \}$, let $\{\mu _i \}$ be the set of corresponding eigenvalues, let  $\{v_i\}$ be orthogonal in $W^{1,2}_n(\Omega)$ and orthonormal in $L^2(\Omega)$. These two families of vectors shall be used to construct  the finite dimensional approximations of the displacement and the temperature.  To construct the basis for approximating the visco-elastic strain tensor we will proceed as follows. 

 Let us consider the symmetric gradients of first 
$k$ functions from the basis $\{\ten{w}_i\}_{i=1}^{\infty}$. Due to the regularity of the eigenfunctions we observe that
$\ten{\varepsilon}(\ten{w}_i)$ are elements of $H^s(\Omega,\mathcal{S}^3)$, namely the fractional Sobolev space with a scalar product denoted by $\braket{\cdot,\cdot}_s$ and $s>\frac{3}{2}$.
Define now
\begin{equation}\label{Vk}
V_k:= (\mbox{span}\{\ten{\varepsilon}(\ten{w}_1),...,\ten{\varepsilon}(\ten{w}_k)\})^\bot,
\end{equation}
which is the orthogonal complement in $L^2(\Omega,\mathcal{S}^3)$  taken with respect to the scalar product $(\cdot,\cdot)_{\ten{D}}$ and also 
\begin{equation}\label{Vks}
V_k^s:=V_k\cap H^s(\Omega,\mathcal{S}^3)
\end{equation}
Let $\{\ten{\zeta}^k_n\}_{n=1}^{\infty}$ denote the orthonormal basis of  $V_k$,  which is also an  orthogonal basis of $V_k^s$, for more details see Appendix~\ref{B}.
For $k,l\in\mathbb{N}$, % $l\ge k+1$ 
we are ready to define
\begin{equation}
\begin{split}
\vc{u}_{k,l} & = \sum_{n=1}^k\alpha_{k,l}^n(t) \vc{w}_n,
 \\
\theta_{k,l} & = \sum_{m=1}^l\beta_{k,l}^m(t) v_m,
 \\
\ten{\varepsilon}^{\bf p}_{k,l} & = \sum_{n=1}^k\gamma_{k,l}^n(t) \ten{\varepsilon}(\vc{w}_n) + 
\sum_{m=1}^l\delta_{k,l}^m(t) \ten{\zeta}_m^k,
\end{split}
\label{eq:postac}
\end{equation}
%First index on the approximative solutions $\vc{u}_{k,l}$ and $\theta_{k,l}$ corresponds to the number of base elements used to construct approximate solutions of the displacement whereas the second one corresponds to the number of base elements which we use to construct an approximate solution of temperatures. Using two level approximation we pass to the limit with $k$ and $l$ independently. 
% Furthermore, we use Boccardo and Gallou\"{e}t approach to low regular right hand side of the heat equation.

such that $\vc{u}_{k,l}$, $\ten{\varepsilon}^{\bf p}_{k,l}$ and $\theta_{k,l}$ solve the system of equations
%such that the coefficients $\alpha_{k,l}^n(t)$, $\beta_{k,l}^n(t)$, $\gamma_{k,l}^n(t)$ and $\delta_{k,l}^n(t)$ solve the
%Then we present the approximate 
%system of equations:
%\begin{equation}
%\left\{
%\begin{array}{rcll}
%\left( \ten{T}_{k,l} , \nabla^{sym} \vc{w}_n \right) &=& \langle\vc{f},\vc{w}_n\rangle
%& n=1,..,k,
%\\
%\ten{T}_{k,l} &=& \ten{D}(\ten{\varepsilon}_{k,l} - \ten{\varepsilon}^{\bf p}_{k,l} ),
%\\
%{\color{red} \left( (\ten{\varepsilon}^{\bf p}_{k,l})_t, \nabla^{sym}\vc{w}_n\right) }&=& 
%\left( \ten{G}(\theta_{k,l},\ten{T}^d_{k,l}), \nabla^{sym}\vc{w}_n\right) 
%& n=1,..,k,
%\\
%\left((\theta_{k,l})_t,v_m\right) + \left(\nabla\theta_{k,l},\nabla v_m\right) &=& \langle \mathcal{T}_k( \ten{T}^d_{k,l}:(\ten{\varepsilon}^{\bf p}_{k,l})_t ), v_m \rangle 
%& m=1,..,l.
%\end{array}
%\right.
%\label{app_system}
%\end{equation}

\begin{equation}
\begin{array}{rll}
\int_{\Omega}  \ten{T}_{k,l} : \ten{\varepsilon}(\vc{w}_n) \dx &= 0
& n=1,...,k ,
\\[1ex]
\ten{T}_{k,l} &= \ten{D}(\ten{\varepsilon}(\vc{u}_{k,l}) - \ten{\varepsilon}^{\bf p}_{k,l} ),
\\[1ex]
\int_{\Omega}(\ten{\varepsilon}^{\bf p}_{k,l})_t : \ten{D}\ten{\varepsilon}(\vc{w}_n) \dx &= 
\int_{\Omega}\ten{G}(\theta_{k,l} + \tilde{\theta},  \ten{T}^d_{k,l} + \tilde{\ten{T}}^d  ) : \ten{D}\ten{\varepsilon}(\vc{w}_n) \dx 
& n=1,...,k ,
\\[1ex]
\int_{\Omega}(\ten{\varepsilon}^{\bf p}_{k,l})_t : \ten{D}\ten{\zeta}^k_m \dx &= 
\int_{\Omega}\ten{G}(\theta_{k,l} + \tilde{\theta},  \ten{T}^d_{k,l} + \tilde{\ten{T}}^d  ) : \ten{D}\ten{\zeta}^k_m \dx 
& m=1,...,l ,
\\[1ex]
\int_{\Omega}(\theta_{k,l})_t v_m\dx  + \int_{\Omega}\nabla\theta_{k,l}\cdot\nabla v_m \dx &
\\[1ex]
= \int_{\Omega} \mathcal{T}_k(  (\ten{T}_{k,l}^d + \tilde{\ten{T}}^d  ): & \ten{G}(\theta_{k,l} + \tilde{\theta},  \ten{T}^d_{k,l} + \tilde{\ten{T}}^d  ) ) v_m \dx & m=1,...,l .
\end{array}
\label{app_system}
\end{equation}
%for each $n=1,..,k$ and $m=1,..,l$ and for the test function $\ten{\Phi}$ equal to $\ten{\varepsilon}(\vc{w}_n)$ or $\ten{\zeta}_n$.
for a.a. $t\in(0,T)$.
%{\color{red} Pewnie trzebaby dodać jeszcze, że to dla prawie każdego czasu czy dodatkowo dodać całkę po czasie?}
%and for later use we introduce the notation $\ten{T}^d=\ten{T} -\frac{1}{3}tr (\ten{T})\ten{I}$.  % and $\ten{T}^d_{k,l}=\left(\ten{D}(\ten{\varepsilon}_{k,l} - \ten{\varepsilon}^{\bf p}_{k,l}) \right) ^d $
%where $\vc{f}_k$ and $\ten{G}_k$ are the projections on the subspace spanned by the first $k$ eigenfunctions from $\{\vc{w}_n\}_{n=1}^{\infty}$ and on the subspace spanned by first $k$ eigenfunctions from $\{\nabla\vc{w}_n+\nabla^T\vc{w}_n\}_{n=1}^{\infty}$, respectively. %such that the coefficients $\alpha_{k,l}^n(t)$, $\beta_{k,l}^n(t)$, $\gamma_{k,l}^n(t)$ and $\delta_{k,l}^n(t)$ solve the
%system of equations:
For each approximate equation we  have the initial conditions in the following  form 
\begin{equation}
\left\{
\begin{array}{rclc}
%\vc{u}_{k,l} & = & 0 & \mbox{in } \partial\Omega, \\[1ex]
%\frac{\partial \theta_{k,l}}{\partial \vc{n}} & = & 0 &  \mbox{in } \partial\Omega,\\[1ex]
%\vc{u}_{k,l}(x,0)&=&\sum_{n=1}^k \big(\vc{u}_0,\vc{w}_n\big)\vc{w}_n(x) & \forall k,l\in\mathbb{N},\\[1ex]
\left( \theta_{k,l}(x,0), v_m\right) &=& \left( \mathcal{T}_k(\theta_0),v_m \right) & m=1,..,l, \\[1ex]
\left( \ten{\varepsilon}^{\bf p}_{k,l}(x,0), \ten{\varepsilon}(\vc{w}_n) \right)_{\ten{D}} &=& \left(\ten{\varepsilon}^{\bf p}_0, \ten{\varepsilon}(\vc{w}_n) \right)_{\ten{D}}
& n=1,..,k,
\\[1ex]
\left( \ten{\varepsilon}^{\bf p}_{k,l}(x,0), \ten{\zeta}_m^k) \right)_{\ten{D}} &=& \left(\ten{\varepsilon}^{\bf p}_0, \ten{\zeta}^k_m \right)_{\ten{D}}
& m=1,..,l,
\end{array}
\right.
\label{eq:warunki_pocz_app}
\end{equation}
where $\big(\cdot,\cdot\big)$ denotes the inner product in $L^2(\Omega)$ and $\big(\cdot,\cdot\big)_{\ten{D}}$ the inner product in $L^2(\Omega,\mathcal{S}^3)$.

Let us define 
\begin{equation}
\begin{split}
\vc{\xi}_1(t) &=(\alpha_{k,l}^1(t),..., \alpha_{k,l}^k(t))^T, \\
\vc{\xi}_2(t) &= (\beta_{k,l}^1(t),...,\beta_{k,l}^l(t),\gamma_{k,l}^1(t),..., \gamma_{k,l}^k(t),\delta_{k,l}^1(t),...,\delta_{k,l}^l(t) )^T .
\end{split}
\nonumber
\end{equation} 
The selection of the Galerkin bases and representation of the approximate solution \eqref{eq:postac} allows to notice that 
\begin{equation}
\alpha_{k,l}^n(t) =  \frac{1}{\lambda_n} %\Big(
\gamma_{k,l}^n(t)\int_{\Omega}\ten{D}\ten{\varepsilon}(\vc{w}_n):\ten{\varepsilon}(\vc{w}_n) \dx = \gamma_{k,l}^n(t)
%+ \delta_{k,l}^j(t)\int_{\Omega}\ten{D}\ten{\zeta}_j:\ten{\varepsilon}(\vc{w}_j)\dx \Big),
\\
\end{equation}
and hence we obtain
\begin{equation}
\left\{
\begin{split}
(\gamma_{k,l}^n(t))_t  &= 
\frac{1}{\lambda_n} 
\int_{\Omega}\tilde{\ten{G}}(x,t,\vc{\xi}_1(t),\vc{\xi}_2(t)) : \ten{D}\ten{\varepsilon}(\vc{w}_n) \dx  ,
\\
(\delta_{k,l}^m(t))_t  &=
\int_{\Omega}\tilde{\ten{G}}(x,t,\vc{\xi}_1(t),\vc{\xi}_2(t)) : \ten{D}\ten{\zeta}_m^k\dx ,
\\
(\beta_{k,l}^m(t))_t &= \int_{\Omega} \mathcal{T}_k\Big(  \big(( \ten{D}\sum_{n=1}^k\alpha_{k,l}^n\ten{\varepsilon}(\vc{w}_n) - \ten{D}(\sum_{n=1}^l\gamma_{k,l}^n(t) \ten{\varepsilon}(\vc{w}_n) +
\delta_{k,l}^n(t) \ten{\zeta}_n ))^d + \tilde{\ten{T}}^d  \big)
\\
&\quad :\tilde{\ten{G}}(x,t,\vc{\xi}_1(t),\vc{\xi}_2(t)) \Big) v_m \dx  + \mu_m \beta_{k,l}^m(t),
\end{split}
\right.
\label{app_system20}
\end{equation}
for $n=1,...,k$ and $m=1,...,l$, where
\begin{equation}
\begin{split}
& \quad \tilde{\ten{G}}(x,t,\vc{\xi}_1(t),\vc{\xi}_2(t))
\\
& := \ten{G}(\theta_{k,l} +\tilde{\theta},\ten{T}_{k,l}^d + \tilde{\ten{T}}^d)
\\
&=\ten{G}\Big(\sum_{j=1}^l \beta_{k,l}^j(t) v_j(x) + \tilde{\theta},  \Big(\ten{D}\sum_{j=1}^k \alpha_{k,l}^j(t)\ten{\varepsilon}(\vc{w}_j) - \ten{D}\sum_{j=1}^l \big(\gamma_{k,l}^j(t) \ten{\varepsilon}(\vc{w}_j) + \delta_{k,l}^j(t) \ten{\zeta}_j  \big) \Big)^d + \tilde{\ten{T}}^d  \Big)
\end{split}
\nonumber
\end{equation}
Hence
\begin{equation}
\left\{
\begin{split}
(\gamma_{k,l}^n(t))_t  &= 
\frac{1}{\lambda_n} 
\int_{\Omega}\tilde{\ten{G}}(x,t,\vc{\xi}_1(t),\vc{\xi}_2(t)) : \ten{D}\ten{\varepsilon}(\vc{w}_n) \dx  ,
\\
(\delta_{k,l}^m(t))_t  &=
\int_{\Omega}\tilde{\ten{G}}(x,t,\vc{\xi}_1(t),\vc{\xi}_2(t)) : \ten{D}\ten{\zeta}_m^k\dx ,
\\
(\beta_{k,l}^m(t))_t &= \int_{\Omega} \mathcal{T}_k\Big(  \big(( \ten{D}\sum_{n=1}^k\alpha_{k,l}^n\ten{\varepsilon}(\vc{w}_n) - \ten{D}(\sum_{n=1}^l\gamma_{k,l}^n(t) \ten{\varepsilon}(\vc{w}_n) +
\delta_{k,l}^n(t) \ten{\zeta}_n ))^d + \tilde{\ten{T}}^d  \big)
\\
&\quad :\tilde{\ten{G}}(x,t,\vc{\xi}_1(t),\vc{\xi}_2(t)) \Big) v_m \dx  + \mu_m \beta_{k,l}^m(t),
\end{split}
\right.
\label{app_system2}
\end{equation}
System \eqref{app_system2} with initial conditions \eqref{eq:warunki_pocz_app} can be equivalently written  as the initial value problem 
\begin{equation}\label{47}
\begin{split}
&\frac{d\vc{\xi}_2 }{dt}  = \vc{F}(\vc{\xi}_1(t),\vc{\xi}_2(t),t),
\qquad
t\in [0,T),
\\
&\vc{\xi}_2(0) =\vc{\xi}_{2,0},
\end{split}
\end{equation}
where $\vc{\xi}_{2,0}$ is a vector of initial conditions obtained from \eqref{eq:warunki_pocz_app}. For $n\leq k$, we get $\alpha_{k,l}^n=\gamma_{k,l}^n$, hence $\vc{F}(\vc{\xi}_1(t),\vc{\xi}_2(t),t)$ can be treated as a function only of $\vc{\xi}_2(t)$, i.e. $\vc{F}(\vc{\xi}_1(t),\vc{\xi}_2(t),t)=\tilde{\vc{F}}(\vc{\xi}_2(t),t)$.
%Using the H\"older inequality $|\vc{F}(t)|$ can be estimate by product of $\|\ten{G}(\theta_{k,l} + \tilde{\theta},  \ten{T}^d_{k,l} + \tilde{\ten{T}}^d  )\|_{L^{p'}(\Omega)}$ and  $\|\ten{T}^d_{k,l} + \tilde{\ten{T}}^d  \|_{L^p(\Omega)}$, $\|\ten{\zeta}_n\|_{L^p(\Omega)}$ or $\|\ten{\varepsilon}(\vc{w}_n) \|_{L^{p}(\Omega)}$. In finite dimensional spaces all of the norm are equivalent, hence function $|\vc{F}(t)|$ is bounded. 
\begin{lemat}{(Existence of approximate solution)}

For initial condition satisfying $\ten{\varepsilon}^{\bf p}_0\in L^2(\Omega,\mathcal{S}^3_d)$ and $\theta_0\in L^1(\Omega)$ there exists an absolutely continuous in time solution to \eqref{47}.

\label{istnienie_przyblizone}
\end{lemat}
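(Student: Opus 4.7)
The plan is to cast \eqref{47} as a Carath\'eodory system, obtain local existence via Peano's theorem, and then rule out finite-time blow-up through an a priori bound drawn from the monotone--coercive structure of $\ten G$. Continuity of the right-hand side $\tilde{\vc F}(\vc\xi_2,t)$ in $\vc\xi_2$ follows from continuity of $\ten G$ and $\mathcal T_k$ together with the linear dependence of $\theta_{k,l}$, $\ten\varepsilon(\vc u_{k,l})$ and $\ten\varepsilon^{\bf p}_{k,l}$ on $\vc\xi_2$, while measurability and local integrability in $t$ are inherited from $\tilde\theta$ and $\tilde{\ten T}$ through Lemma~\ref{wyrzucenie_war_brzeg}. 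Using the identification $\alpha^n_{k,l}=\gamma^n_{k,l}$ already noted in the excerpt, the system genuinely depends only on $\vc\xi_2$ and $t$, so Peano's theorem furnishes an absolutely continuous solution on some maximal subinterval $[0,t^*)\subseteq[0,T)$.

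To show that $t^*=T$, I would establish an a priori bound on $|\vc\xi_2(t)|$ uniform in $t\in[0,t^*)$. The identity $\alpha^n_{k,l}=\gamma^n_{k,l}$ yields the clean representation $\ten T_{k,l}=-\ten D\sum_{m=1}^l \delta^m_{k,l}\ten\zeta^k_m$, so $\ten T_{k,l}$ is controlled by the vector $\vc\delta_{k,l}:=(\delta^1_{k,l},\dots,\delta^l_{k,l})$ alone. Multiplying the fourth equation of \eqref{app_system} by $\delta^m_{k,l}$, summing over $m$, and using orthonormality of $\{\ten\zeta^k_m\}$ in $(\cdot,\cdot)_{\ten D}$ together with the fact that $\tilde{\ten G}$ is deviatoric, I obtain
\begin{equation*}
\frac{1}{2}\frac{d}{dt}|\vc\delta_{k,l}|^2=-\int_\Omega \tilde{\ten G}(\theta_{k,l}+\tilde\theta,\ten T^d_{k,l}+\tilde{\ten T}^d):\ten T^d_{k,l}\dx.
\end{equation*}
Decomposing $\tilde{\ten G}:\ten T^d_{k,l}=\tilde{\ten G}:(\ten T^d_{k,l}+\tilde{\ten T}^d)-\tilde{\ten G}:\tilde{\ten T}^d$, applying coercivity~(c) of Assumption~\ref{ass_G} to the first term and the growth bound~(b) combined with Young's inequality to the second yields
\begin{equation*}
\frac{d}{dt}|\vc\delta_{k,l}(t)|^2+\beta\int_\Omega|\ten T^d_{k,l}+\tilde{\ten T}^d|^p\dx\leq C\left(1+\|\tilde{\ten T}^d(t)\|_{L^p}^p\right).
\end{equation*}
Integrating in time and using $\tilde{\ten T}^d\in L^p(0,T;L^p(\Omega))$ from Lemma~\ref{wyrzucenie_war_brzeg} bounds $|\vc\delta_{k,l}|$ on $[0,t^*)$. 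With $\vc\delta_{k,l}$ bounded, the growth bound~(b) controls $\|\tilde{\ten G}\|_{L^{p'}(\Omega)}$ pointwise in time; the $\gamma^n_{k,l}$-equations then yield a bound on $|(\gamma^n_{k,l})_t|$ via Cauchy--Schwarz against $\ten D\ten\varepsilon(\vc w_n)$, and the $\beta^m_{k,l}$-equations inherit a bound from the pointwise estimate $|\mathcal T_k(\cdot)|\leq k$ together with the linear term $\mu_m\beta^m_{k,l}$ and Gronwall's lemma. The standard continuation principle then extends the solution to all of $[0,T)$.

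The main obstacle is the polynomial growth of order $p-1$ of $\ten G$, which enters the $\delta^m$-equations and could a priori produce finite-time blow-up. This is absorbed precisely by the coercivity~(c) of $\ten G$, exploited through the clean representation $\ten T_{k,l}=-\ten D\sum_m \delta^m_{k,l}\ten\zeta^k_m$ made possible by the orthogonality relation $\alpha^n_{k,l}=\gamma^n_{k,l}$ built into the Galerkin basis of Appendix~\ref{B}. Without that identification, cross terms of the form $\ten D\sum_n(\alpha^n_{k,l}-\gamma^n_{k,l})\ten\varepsilon(\vc w_n)$ would obstruct extracting dissipation from the $\delta$-equations alone, and the $p$-th order growth could not be closed.
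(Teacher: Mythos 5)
Your proposal is correct and follows essentially the same route as the paper: local existence via the Carath\'eodory theorem for ODEs (the paper's reference; your invocation of ``Peano'' is a slip, since the right-hand side depends on $\tilde\theta(\cdot,t)$ and $\tilde{\ten{T}}^d(\cdot,t)$ only measurably in $t$, so Peano's continuity hypothesis does not apply and one genuinely needs Carath\'eodory), followed by an a priori bound drawn from the coercivity of $\ten{G}$ to preclude blow-up. The paper defers the global extension to Lemma~\ref{pom_2} and the remark after Lemma~\ref{lm:7}, whereas you present it inline; but the identity $\mathcal{E}(\ten{\varepsilon}(\vc{u}_{k,l}),\ten{\varepsilon}^{\bf p}_{k,l})=\frac{1}{2}|\vc{\delta}_{k,l}|^2$, which your representation $\ten{T}_{k,l}=-\ten{D}\sum_{m=1}^l\delta^m_{k,l}\ten{\zeta}^k_m$ makes transparent, shows that your bound on $|\vc{\delta}_{k,l}|^2$ and the paper's potential-energy estimate are precisely the same computation.
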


% http://en.wikipedia.org/wiki/Carath%C3%A9odory's_existence_theorem

\begin{proof}

According to Carath\'eodory Theorem, see \cite[Theorem 3.4]{maleknecas} or \cite[Appendix $(61)$]{zeidlerB}, there exist  unique absolutely continuous functions $\beta_{k,l}^m(t)$, $\gamma_{k,l}^n(t)$ and $\delta_{k,l}^m(t)$ for every $n \leq k$ and $m \leq l$ on some time interval $[0,t^*]$. 
%But due to continuity of $\gamma_{k,l}^n$, $\delta_{k,l}^n$ and $\beta_{k,l}^n$ and boundedness of $|\vc{F}(t)|$ we can shift $t^*$ to $T$. 
Moreover for every $n \leq k$ there exists a unique absolutely continuous function $\alpha_{k,l}^n(t)$.% as a rescaling of the function $\gamma_{k,l}^n(t)$.

%\begin{equation}
%\begin{split}
%(\gamma_{k,l}^n(t))_t  &= 
%\frac{1}{\|\ten{\varepsilon}(\vc{w}_n) \|_{L^2(\Omega)}^2} 
%\int_{\Omega}\ten{G}(\theta_{k,l} +\tilde{\theta},\ten{T}_{k,l}^d + \tilde{\ten{T}}^d) : \ten{\varepsilon}(\vc{w}_n) \dx  ,
%\\
%(\delta_{k,l}^n(t))_t  &=
%\int_{\Omega}\ten{G}(\theta_{k,l} + \tilde{\theta},  \ten{T}^d_{k,l} + \tilde{\ten{T}}^d  ) : \ten{\zeta}_n\dx ,
%\\
%(\beta_{k,l}^m(t))_t &= 
%\int_{\Omega} \mathcal{T}_k(  (\ten{T}_{k,l}^d + \tilde{\ten{T}}^d  ):\ten{G}(\theta_{k,l} + \tilde{\theta},  \ten{T}^d_{k,l} + %\tilde{\ten{T}}^d  ) ) v_m \dx  + \mu_m \beta_{k,l}^m(t),
%\end{split}
%\label{app_system}
%\end{equation}

%{\color{red} Dlaczego możemy przedłożyć do całego czasu? Książka Josefa  }
% Nie jestem pewny jednoznaczności tego rozwiązania, ale to chyba jest stąd, że mamy niezlaeżne szacowania, ze względu na parametry aproksymacji.

\end{proof}
% o Twierdzeniu Caratheodorego napisane jest w miarę jasno na wikipedii http://en.wikipedia.org/wiki/Carath%C3%A9odory's_existence_theorem

%{\em Dlatego teraz możemy zakończyć dowód???}

\subsection{Boundedness of approximate solutions}
\label{sec:5}

In this section we show the uniform boundedness of approximate solutions. As the considered model describes the physical phenomena, then it is obvious that the total energy should be finite. The total energy of the system consists of potential energy and thermal energy.

\begin{defi}%{Potential energy}
We say that $\mathcal{E}$
is the  potential energy if
\begin{equation}
\mathcal{E}(\ten{\varepsilon}(\vc{u}),\ten{\varepsilon}^{\bf p}): = \frac{1}{2}\int_{\Omega}\ten{D}(\ten{\varepsilon}(\vc{u}) - \ten{\varepsilon}^{\bf p}):(\ten{\varepsilon}(\vc{u}) - \ten{\varepsilon}^{\bf p} ) \dx .
\nonumber
\end{equation}
%where $\ten{\varepsilon}$ is a symmetric gradient of the displacement and $\ten{\varepsilon}^{\bf p}$ is inelastic strain tensor.
\label{energia}
\end{defi}

\begin{lemat}
There exists a constant $C$ which
%, which depends only on $T$,  and 
is uniform with respect to  $k$ and $l$ such that
\begin{equation}
\sup_{t\in [0,T]} \mathcal{E}(\ten{\varepsilon}(\vc{u}_{k,l}) , \ten{\varepsilon}^{\bf p}_{k,l}) (t)
+ c \|\ten{T}_{k,l}^d + \tilde{\ten{T}}^d\|^p_{L^p(0,T,L^p(\Omega))}
\leq
C.
\end{equation}
\label{pom_2}
\end{lemat}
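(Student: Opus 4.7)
The idea is to derive an energy identity from the Galerkin system \eqref{app_system} by isolating the contributions of $\ten{\varepsilon}(\vc{u}_{k,l})_t$ and $(\ten{\varepsilon}^{\bf p}_{k,l})_t$, and then to use Assumption~\ref{ass_G} to absorb the nonlinear term into $\mathcal{E}$ and into a power of $\ten{T}_{k,l}^d+\tilde{\ten{T}}^d$. Writing $\mathcal{E}(t)=\tfrac{1}{2}\int_\Omega \ten{T}_{k,l}:(\ten{\varepsilon}(\vc{u}_{k,l})-\ten{\varepsilon}^{\bf p}_{k,l}) \dx$, differentiating in time and exploiting the symmetry of $\ten{D}$, I would obtain
$$\frac{d}{dt}\mathcal{E} = \int_\Omega \ten{T}_{k,l}:\ten{\varepsilon}((\vc{u}_{k,l})_t) \dx - \int_\Omega \ten{T}_{k,l}:(\ten{\varepsilon}^{\bf p}_{k,l})_t \dx.$$
The first integral vanishes identically by the first line of \eqref{app_system}, since $(\vc{u}_{k,l})_t\in\mathrm{span}\{\vc{w}_1,\dots,\vc{w}_k\}$.

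For the second integral, the crucial observation is that $\ten{\varepsilon}(\vc{u}_{k,l})-\ten{\varepsilon}^{\bf p}_{k,l}$ lies in $\mathrm{span}\{\ten{\varepsilon}(\vc{w}_1),\dots,\ten{\varepsilon}(\vc{w}_k),\ten{\zeta}^k_1,\dots,\ten{\zeta}^k_l\}$, so an appropriate linear combination of the third and fourth lines of \eqref{app_system} produces
$$\int_\Omega (\ten{\varepsilon}^{\bf p}_{k,l})_t:\ten{T}_{k,l} \dx = \int_\Omega \ten{G}(\theta_{k,l}+\tilde{\theta},\ten{T}^d_{k,l}+\tilde{\ten{T}}^d):\ten{T}^d_{k,l} \dx,$$
where I have also used that $\ten{G}$ is traceless-valued. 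Splitting $\ten{T}^d_{k,l}=(\ten{T}^d_{k,l}+\tilde{\ten{T}}^d)-\tilde{\ten{T}}^d$, applying the coercivity bound~\ref{ass_G}(c) to the first part and the growth bound~\ref{ass_G}(b) together with Young's inequality to the cross term, I get for any $\epsilon>0$
$$\frac{d}{dt}\mathcal{E} + (\beta-\epsilon)\int_\Omega |\ten{T}^d_{k,l}+\tilde{\ten{T}}^d|^p \dx \leq C_\epsilon\bigl(1+\|\tilde{\ten{T}}^d\|_{L^p(\Omega)}^p\bigr).$$

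Choosing $\epsilon=\beta/2$ and integrating from $0$ to $t$, I would then use the $L^p$-regularity of $\tilde{\ten{T}}$ supplied by Lemma~\ref{wyrzucenie_war_brzeg} together with the estimate $\mathcal{E}(0)\leq C\|\ten{\varepsilon}^{\bf p}_0\|_{L^2(\Omega)}^2$---which follows because \eqref{eq:warunki_pocz_app} forces $\ten{\varepsilon}(\vc{u}_{k,l})(0)-\ten{\varepsilon}^{\bf p}_{k,l}(0)$ to be (up to sign) the $\ten{D}$-orthogonal projection of $\ten{\varepsilon}^{\bf p}_0$ onto $V_k$---to conclude the asserted inequality on the maximal interval of existence $[0,t^*]$. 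Since the resulting bound is independent of $t^*$, a standard continuation argument extends the approximate solution to $[0,T]$. The principal technical subtlety is not the analytic estimates but the algebraic fact that $\ten{\varepsilon}(\vc{u}_{k,l})-\ten{\varepsilon}^{\bf p}_{k,l}$ is an admissible combination of test functions for the third and fourth Galerkin equations in \eqref{app_system}; this is precisely the motivation for constructing the $\ten{D}$-orthogonal basis $\{\ten{\zeta}^k_m\}$ of $V_k$ in Appendix~\ref{B}, and without it the energy method would not close.
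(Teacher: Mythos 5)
Your proposal is correct and follows essentially the same route as the paper: differentiate $\mathcal{E}$ in time, kill the $\ten{T}_{k,l}:\ten{\varepsilon}((\vc{u}_{k,l})_t)$ term via the Galerkin momentum balance, use the fact that $\ten{\varepsilon}(\vc{u}_{k,l})-\ten{\varepsilon}^{\bf p}_{k,l}$ is a span of admissible test elements (indeed, since $\alpha^n_{k,l}=\gamma^n_{k,l}$, it lies in $\mathrm{span}\{\ten{\zeta}^k_m\}$ alone, so line four of \eqref{app_system} suffices) to turn the $\ten{\varepsilon}^{\bf p}$-term into $\int_\Omega \ten{G}:\ten{T}^d_{k,l}\dx$, then split $\ten{T}^d_{k,l}=(\ten{T}^d_{k,l}+\tilde{\ten{T}}^d)-\tilde{\ten{T}}^d$ and apply the coercivity and growth of $\ten{G}$ together with Young's inequality. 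Your observation that $\mathcal{E}(0)$ is uniformly bounded because the initial datum is a $\ten{D}$-orthogonal projection of $\ten{\varepsilon}^{\bf p}_0$ (more precisely, onto the finite-dimensional subspace $\mathrm{span}\{\ten{\zeta}^k_1,\dots,\ten{\zeta}^k_l\}\subset V_k$, whence Bessel's inequality) makes explicit a point the paper leaves implicit, as does the remark about continuation past $t^*$; neither changes the argument.
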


\begin{proof}
The potential energy is an absolutely continuous function and calculating the time derivative of  $\mathcal{E}(t)$ we get for a.a. $t\in[0,T]$
\begin{equation}
\begin{split}
\frac{d}{dt} \mathcal{E}(\ten{\varepsilon}(\vc{u}_{k,l}) , \ten{\varepsilon}^{\bf p}_{k,l}) 
%& =
%\int_{\Omega}\ten{D}(\ten{\varepsilon}(\vc{u}_{k,l}) - \ten{\varepsilon}^{\bf p}_{k,l}):((\ten{\varepsilon}(\vc{u}_{k,l}))_t - (\ten{\varepsilon}^{\bf p}_{k,l})_t)\dx 
% \\
& = 
\int_{\Omega}\ten{D}(\ten{\varepsilon}(\vc{u}_{k,l}) - \ten{\varepsilon}^{\bf p}_{k,l}):(\ten{\varepsilon}(\vc{u}_{k,l}))_t 
\dx 
\\
& \quad
-
\int_{\Omega}\ten{D}(\ten{\varepsilon}(\vc{u}_{k,l}) - \ten{\varepsilon}^{\bf p}_{k,l}): (\ten{\varepsilon}^{\bf p}_{k,l})_t \dx.
%\\
%&
%=\int_{\Omega}\ten{D}(\ten{\varepsilon}(\vc{u}_{k,l}) - \ten{\varepsilon}^{\bf p}_{k,l}):(\nabla (\vc{u}_{k,l})_t-\nabla (\vc{u}^*_k)_t) \dx
%\\
%&\quad- \int_{\Omega}\ten{D}\ten{\varepsilon}(\vc{u}^*_k) :(\nabla (\vc{u}_{k,l})_t - (\nabla(\vc{u}_{k}^*)_t) \dx
%\\
%& \quad
%- \int_{\Omega}\ten{D}(\ten{\varepsilon}(\vc{u}_{k,l}) - \ten{\varepsilon}(\vc{u}^*_k) - \ten{\varepsilon}^{\bf p}_{k,l}): (\ten{\varepsilon}^{\bf p}_{k,l})_t \dx .
\end{split}
\label{pochodna}\end{equation}
%Let us discuss  the regularity of the  time derivative $\{(\vc{u}_{k,l})_t\}$.
%Recall that  $\vc{u}_{k,l}(x,t)=\sum_{n=1}^k \alpha^n_{k,l}(t)\vc{w}_n(x)$ with  $\alpha^n_{k,l}(t) = \frac{1}{\lambda_n}\int_{\Omega}\ten{D}\ten{\varepsilon^{\bf p}_{k,l}}:\ten{\varepsilon}(\vc{w}_n)\dx $. Hence Lemma~\ref{wsp_org_ep} provides that 
%$\frac{d}{dt}\alpha^n_{k,l}\in L^{p'}(0,T)$ and the approximate sequence $\{(\vc{u}_{k,l})_t\}$ is bounded in $L^{p'}(0,T,L^2(\Omega))$. 
In the first step we  multiply \eqref{app_system}$_{(1)}$ by $\{(\alpha_{k,l}^n)_t\}$ for each $n\leq k$.
%, however since it is not sufficiently regular we multiply \eqref{app_system}$_{(1)}$ by 
%%
%%Time derivative of $(\vc{u}_{k,l})_t$ is equal $\sum_{n=1}^k \int_{\Omega}\ten{D}\ten{\varepsilon}^{\bf p}_t:\ten{\varepsilon}(\vc{w}_n)\dy \vc{w}_n(x)$. From the uniform boundedness of the sequence $\{\ten{\varepsilon}^{\bf p}_{k,l}\}$ the function $(\vc{u}_{k,l})_t$ belongs to {\color{red}$L^{p'}(0,T,L^2(\Omega))$}. Then the function $(\vc{u}_{k,l})_t$ is not a proper test function for \eqref{pierwsze}, because of the low regularity with respect to time. Hence, we use the standard mollifier $\eta_{\epsilon}$ with respect to time to get enough regularity, similar tools was used in \cite{GSGW} {\color{red} Gdzies jeszcze?}. 
%%
%\begin{equation}
%((\alpha_{k,l}^n)_t*\eta_{\epsilon}\mathbf{1}_{(t_1,t_2)})*\eta_{\epsilon},
%\end{equation}
%where $\mathbf{1}_{(t_1,t_2)}$ is a characteristic function of the time interval $(t_1,t_2)$ and $\eta_{\epsilon}$ is the standard mollifier and integrate over time interval $(0,T)$. 
Summing over $n=1,...,k$ %and using the properties of the convolution  %$(\ten{\varepsilon}_{k,l})_t*\eta_{\epsilon}=(\ten{\varepsilon}_{k,l}*\eta_{\epsilon})_t$ and from the boundary condition $\vc{u}=0$ 
we obtain
\begin{equation}
 \int_{\Omega}\ten{D}(\ten{\varepsilon}(\vc{u}_{k,l}) - \ten{\varepsilon}^{\bf p}_{k,l}): (\ten{\varepsilon}(\vc{u}_{k,l})_t \dx =
0.
\label{pierwsze_r}
\end{equation}
In the second step we  multiply   \eqref{app_system}$_{(4)}$ by  
%We use constitutive equation for evolution of visco-elastic strain tensor to receive the second part of required equation. %term with the plastic strain tensor. 
%We want to dispose the time derivative of symmetric gradient $\ten{\varepsilon}_{k,l}$. To do this, we use the same trick as was applied with regards to the test function. %Using the integrating over time interval and derivative over time, we can remove the function $\vc{u}_t$ (or $\varepsilon'$).
%Using as a test function
 $\delta^m_{k,l}$ and 
%  Note that $\ten{T}_{k,l}$ are bounded in $L^\infty(0,T;L^2(\Omega,\mathcal{S}^3))$ for each $k,l\in\mathbb{N}$. Moreover, for finite dimensional spaces all norms are equivalent, hence $\ten{T}_{k,l}$ belongs to $L^{p'}(0,T;L^{p'}(\Omega,\mathcal{S}^3))$, thus 
 summing over 
 %$n=1,...,k$ and 
 $m=1,...,l$, we obtain the identity, which is equivalent to
\begin{equation}
\int_{\Omega}(\ten{\varepsilon}^{\bf p}_{k,l})_t:\ten{T}_{k,l} \dx=
\int_{\Omega}\ten{G}(\tilde{\theta} + \theta_{k,l},\tilde{\ten{T}}^d + \ten{T}^d_{k,l}):\ten{T}_{k,l} \dx.
\label{drugie_r}
\end{equation}
%and these products are well defined.
%Function $\ten{T}_{k,l}^d$ is regular enough to be a test function for this equation. We mollify it to use the information that $\ten{\varepsilon}(\vc{u}_{k,l}) - \ten{\varepsilon}^{\bf p}_{k,l}$ belongs to $L^{\infty}(0,T,L^2(\Omega))$ for each $k,l\in\mathbb{N}$. 
%where the last equality outcomes from the dependency $\ten{A}^d:\ten{B}=\ten{A}^d:\ten{B}^d$. 
%Subtracting \eqref{drugie_r} from \eqref{pierwsze_r} yields
Thus
\begin{equation}\label{ene}
\begin{split}
\frac{d}{dt} \mathcal{E}(\ten{\varepsilon}(\vc{u}_{k,l}) , \ten{\varepsilon}^{\bf p}_{k,l}) 
%\int_{\Omega}\ten{D}\big(\ten{\varepsilon}(\vc{u}_{k,l}) - \ten{\varepsilon}^{\bf p}_{k,l}\big)&:\big((\ten{\varepsilon}(\vc{u}_{k,l}) - \ten{\varepsilon}^{\bf p}_{k,l})\big)_t \dx
%\\
 = 
-
\int_{\Omega}\ten{G}(\tilde{\theta} + \theta_{k,l},\tilde{\ten{T}}^d + \ten{T}^d_{k,l}):\ten{T}^d_{k,l}\dx.
\end{split}
\end{equation}
%Integrating the first integral by parts (functions $\vc{u}_{k,l}$ have zero boundary conditions),  using the expression on $(\ten{\varepsilon}^{\bf p}_{k,l})_t$, \eqref{app_system}$_{(1,2)}$, 
Using Assumption 1c and the Young inequality we get
\begin{equation}
\begin{split}
\frac{d}{dt} \mathcal{E}(\ten{\varepsilon}(\vc{u}_{k,l}) , \ten{\varepsilon}^{\bf p}_{k,l}) 
& = %\int_{\Omega}  \ten{D}( \ten{\varepsilon}(\vc{u}_{k,l}) - \ten{\varepsilon}^{\bf p}_{k,l}): (\ten{\varepsilon}(\vc{u}_{k,l}))_t \dx 
%\\
%& \quad  - \int_{\Omega}\ten{T}_{k,l} : \ten{G}(\theta_{k,l}+\tilde{\theta},\ten{T}_{k,l}^d +\tilde{\ten{T}}^d) \dx  
%\\ 
- \int_{\Omega} (\ten{T}_{k,l}^d+\tilde{\ten{T}}^d) : \ten{G}(\theta_{k,l}+\tilde{\theta},\ten{T}_{k,l}^d +\tilde{\ten{T}}^d) \dx
\\
& \quad 
+ \int_{\Omega} \tilde{\ten{T}}^d : \ten{G}(\theta_{k,l}+\tilde{\theta},\ten{T}_{k,l}^d +\tilde{\ten{T}}^d) \dx    
\\
& 
\leq
- \beta \|\ten{T}_{k,l}^d + \tilde{\ten{T}}^d\|^p_{L^p(\Omega)} 
+ \|\tilde{\ten{T}}^d\|_{L^p(\Omega)}\|\ten{G}(\theta_{k,l}+\tilde{\theta},\ten{T}_{k,l}^d +\tilde{\ten{T}}^d)\|_{L^{p'}(\Omega)} 
\\
& \leq
- \beta \|\ten{T}_{k,l}^d + \tilde{\ten{T}}^d\|^p_{L^p(\Omega)} 
+ c(\epsilon)\|\tilde{\ten{T}}^d\|_{L^p(\Omega)}^p + \epsilon\|\ten{G}(\theta_{k,l}+\tilde{\theta},\ten{T}_{k,l}^d +\tilde{\ten{T}}^d)\|_{L^{p'}(\Omega)}^{p'} 
\nonumber
\end{split}\label{osz1}
\end{equation}
where  $\epsilon=\frac{\beta}{2^{p+1}C}$, with a constant $C$ coming from Assumption 1b.  Hence we estimate the last term as follows
\begin{equation}
\epsilon\|\ten{G}(\theta_{k,l}+\tilde{\theta},\ten{T}_{k,l}^d +\tilde{\ten{T}}^d)\|_{L^{p'}(\Omega)}^{p'} \le
\frac{\beta}{2}|\Omega|+\frac{\beta}{2} \|\ten{T}_{k,l}^d + \tilde{\ten{T}}^d\|^p_{L^p(\Omega)}.
\end{equation}
Finally, integrating over $(0,t)$, with $0\le t\le T$ we obtain
\begin{equation}\label{osz2}
\begin{split}
 \mathcal{E}(\ten{\varepsilon}(\vc{u}_{k,l}) , \ten{\varepsilon}^{\bf p}_{k,l}) (t)
&+ \frac{\beta}{2} \|\ten{T}_{k,l}^d + \tilde{\ten{T}}^d\|^p_{L^p(0,T,L^p(\Omega))}
\\&\leq
c(\epsilon)\|\tilde{\ten{T}}^d\|^p_{L^p(0,T,L^p(\Omega))}
+  \mathcal{E}(\ten{\varepsilon}(\vc{u}_{k,l}) , \ten{\varepsilon}^{\bf p}_{k,l})(0) +\frac{\beta}{2} |\Omega|.
\end{split}\end{equation}

\end{proof}

\begin{uwaga} %{Uniformly bounded of the sequence $\{\ten{T}_{k,l}\}$}
From \eqref{osz2} we immediately observe that the sequence $\{\ten{T}_{k,l}^d\}$ is uniformly bounded in the space $L^p(0,T,L^p(\Omega,\mathcal{S}^3))$ with respect to $k$ and $l$.
Additionally, combining \eqref{osz1} and \eqref{osz2} we conclude the uniform boundedness of the sequence 
$\{\ten{G}(\theta_{k,l}+\tilde{\theta},\ten{T}_{k,l}^d +\tilde{\ten{T}}^d)\}$ in the space $L^{p'}(0,T,L^{p'}(\Omega,\mathcal{S}^3))$ and hence the uniform boundedness of the sequence $\{(\ten{T}_{k,l}^d +\tilde{\ten{T}}^d):\ten{G}(\theta_{k,l}+\tilde{\theta},\ten{T}_{k,l}^d +\tilde{\ten{T}}^d)\}$ in $L^1(0,T,L^1(\Omega))$.
\label{wsp_ogr_T}
\end{uwaga}

\begin{uwaga}
The uniform boundedness of the potential energy implies that the sequence $\{\ten{T}_{k,l}\}$ is uniformly bounded in $L^{\infty}(0,T,L^2(\Omega,\mathcal{S}^3))$ and in particular in $L^2(0,T,L^2(\Omega,\mathcal{S}^3))$.  
\end{uwaga}

%\begin{uwaga}
%In previous Lemma the bounded is independent of the index $k$ and $l$.
%\end{uwaga}
%\begin{uwaga}

\begin{lemat}
The sequence $\{(\ten{\varepsilon}^{\bf p}_{k,l})_t\}$ is uniformly bounded in $L^{p'}(0,T,(H^{s}(\Omega,\mathcal{S}^3))')$ with respect to $k$ and $l$. %{\em Potrzebujemy dodatkowego założenia na funkcję $G$ względem pierwszej zmiennej aby mieć jednostajną ograniczoność z względu na $k$}
\label{wsp_org_epa}
\end{lemat}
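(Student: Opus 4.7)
The plan is to test $(\ten{\varepsilon}^{\bf p}_{k,l})_t$ against an arbitrary $\phi \in H^s(\Omega,\mathcal{S}^3)$, use the Galerkin equations to replace $(\ten{\varepsilon}^{\bf p}_{k,l})_t$ by $\ten{G}(\theta_{k,l}+\tilde{\theta},\ten{T}^d_{k,l}+\tilde{\ten{T}}^d)$ (which is already controlled in $L^{p'}(0,T;L^{p'}(\Omega,\mathcal{S}^3))$ by the Remark after Lemma~\ref{pom_2}), and then transfer the estimate to the $H^s$-norm of $\phi$ by means of a projection that is uniformly bounded in $k,l$. Set $X_{k,l} := \mathrm{span}\{\ten{\varepsilon}(\vc{w}_1),\dots,\ten{\varepsilon}(\vc{w}_k),\ten{\zeta}^k_1,\dots,\ten{\zeta}^k_l\}$ and let $P_{k,l}$ be the orthogonal projection onto $X_{k,l}$ in the inner product $(\cdot,\cdot)_{\ten{D}}$. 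From the representation \eqref{eq:postac} we have $(\ten{\varepsilon}^{\bf p}_{k,l})_t \in X_{k,l}$ for a.e. $t$, and by linearity of the Galerkin identities $\eqref{app_system}_{(3)}$--$\eqref{app_system}_{(4)}$ applied to every basis element of $X_{k,l}$, this identifies
\[
(\ten{\varepsilon}^{\bf p}_{k,l})_t = P_{k,l}\bigl(\ten{G}(\theta_{k,l}+\tilde{\theta},\ten{T}^d_{k,l}+\tilde{\ten{T}}^d)\bigr)
\qquad\text{in } L^2(\Omega,\mathcal{S}^3).
\]

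The next step is to compute the duality pairing. For $\phi \in H^s(\Omega,\mathcal{S}^3)$ set $\psi := \ten{D}^{-1}\phi$, which still belongs to $H^s$ because $\ten{D}$ is constant and invertible. Since $(\ten{\varepsilon}^{\bf p}_{k,l})_t \in X_{k,l}$, the $\ten{D}$-orthogonality of $\psi - P_{k,l}\psi$ to $X_{k,l}$ together with the identity above yields
\[
\int_{\Omega} (\ten{\varepsilon}^{\bf p}_{k,l})_t : \phi \dx
= \int_{\Omega} \ten{D}(\ten{\varepsilon}^{\bf p}_{k,l})_t : \psi \dx
= \int_{\Omega} \ten{D}(\ten{\varepsilon}^{\bf p}_{k,l})_t : P_{k,l}\psi \dx
= \int_{\Omega} \ten{G}(\theta_{k,l}+\tilde{\theta},\ten{T}^d_{k,l}+\tilde{\ten{T}}^d) : \ten{D}\, P_{k,l}\psi \dx .
\]
Applying Hölder's inequality with exponents $p',p$ and using boundedness of $\ten{D}$ one obtains
\[
\left| \int_{\Omega} (\ten{\varepsilon}^{\bf p}_{k,l})_t : \phi \dx \right|
\leq C\, \|\ten{G}(\theta_{k,l}+\tilde{\theta},\ten{T}^d_{k,l}+\tilde{\ten{T}}^d)\|_{L^{p'}(\Omega)} \, \|P_{k,l}\psi\|_{L^{p}(\Omega)}.
\]

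The crux is now to bound $\|P_{k,l}\psi\|_{L^p(\Omega)} \leq C\|\phi\|_{H^s(\Omega)}$ with a constant independent of $k,l$. This is precisely the purpose of the Appendix~\ref{B} construction: the basis $\{\ten{\zeta}^k_m\}$ is chosen to be $(\cdot,\cdot)_{\ten{D}}$-orthonormal and simultaneously orthogonal in $\braket{\cdot,\cdot}_{s}$, which (together with the regularity of the eigenfunctions $\ten{\varepsilon}(\vc{w}_n)$) makes $P_{k,l}$ uniformly bounded as an operator $H^s(\Omega,\mathcal{S}^3)\to H^s(\Omega,\mathcal{S}^3)$. Since $s>\tfrac{3}{2}$ the Sobolev embedding $H^s(\Omega)\hookrightarrow L^{\infty}(\Omega)\hookrightarrow L^p(\Omega)$ together with boundedness of $\ten{D}^{-1}$ then delivers the required estimate. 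Taking the supremum over $\phi$ with $\|\phi\|_{H^s}\le 1$ we conclude
\[
\|(\ten{\varepsilon}^{\bf p}_{k,l})_t\|_{(H^s(\Omega,\mathcal{S}^3))'}
\leq C\, \|\ten{G}(\theta_{k,l}+\tilde{\theta},\ten{T}^d_{k,l}+\tilde{\ten{T}}^d)\|_{L^{p'}(\Omega)} ,
\]
and raising to the $p'$-th power, integrating over $(0,T)$, and invoking the uniform $L^{p'}(0,T;L^{p'}(\Omega,\mathcal{S}^3))$ bound on the right-hand side from the remark after Lemma~\ref{pom_2} completes the proof.

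The main obstacle is the uniform (in $k,l$) boundedness of the projection $P_{k,l}$ between $H^s$-type spaces; everything else is routine Hölder estimation combined with the bounds already established. This is exactly why the special basis of $V_k^s$ is constructed in Appendix~\ref{B} rather than choosing an arbitrary orthonormal basis of $V_k$ in $L^2$: a generic $L^2$-basis would not give a projection operator that is continuous from $H^s$ to $H^s$ with a constant independent of the truncation level.
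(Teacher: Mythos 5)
Your strategy is the same as the paper's: identify $(\ten{\varepsilon}^{\bf p}_{k,l})_t$ with the $\ten{D}$-orthogonal projection of $\ten{G}(\theta_{k,l}+\tilde\theta,\ten{T}^d_{k,l}+\tilde{\ten{T}}^d)$ onto the Galerkin span, shift the projection onto the test function using $\ten{D}$-orthogonality, apply H\"older, and then appeal to $H^s$-boundedness of the projection. The algebraic steps (the $\ten{D}^{-1}\phi$ substitution, the equality $\int (\ten{\varepsilon}^{\bf p}_{k,l})_t:\phi\dx=\int \ten{G}:\ten{D}P_{k,l}\psi\dx$) are correct.

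The gap is in the final and crucial assertion that $P_{k,l}$ is uniformly bounded as an operator $H^s\to H^s$ \emph{in both $k$ and $l$}. This is not justified by the Appendix~\ref{B} construction. The basis $\{\ten{\zeta}^k_m\}$ is indeed simultaneously $(\cdot,\cdot)_{\ten{D}}$-orthonormal and $\braket{\cdot,\cdot}_s$-orthogonal, which is precisely why the projection onto $\mathrm{span}\{\ten{\zeta}^k_1,\ldots,\ten{\zeta}^k_l\}$ has an $H^s\to H^s$ operator norm independent of $l$. But the remaining part of $X_{k,l}$, namely $\mathrm{span}\{\ten{\varepsilon}(\vc{w}_1),\ldots,\ten{\varepsilon}(\vc{w}_k)\}$, is only $\ten{D}$-orthogonal: the $\ten{\varepsilon}(\vc{w}_n)$ are \emph{not} orthogonal with respect to $\braket{\cdot,\cdot}_s$, so the $\ten{D}$-orthogonal projection onto this span has no a priori control on its $H^s$ norm beyond the trivial finite-dimensional one, whose constant may grow with $k$. ``Regularity of the eigenfunctions'' gives membership in $H^s$ but says nothing about such a uniform-in-$k$ operator bound. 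The paper handles this by splitting $P_{k,l}=P^k+P^l$ and explicitly notes that the constant for $P^k$ is merely independent of $l$ (since $\mathrm{span}\{\ten{\varepsilon}(\vc{w}_n)\}_{n\le k}$ does not change with $l$), which is all that is needed because this lemma is only used to pass to the limit $l\to\infty$ at fixed $k$; the subsequent $k\to\infty$ passage relies instead on the $L^{p'}(0,T,L^{p'})$ bound of Lemma~\ref{wsp_org_ep}. You should either restrict the claim to uniformity in $l$, or, if you want uniformity in $k$ as well, supply a genuine argument that the $\ten{D}$-orthogonal projection onto $\mathrm{span}\{\ten{\varepsilon}(\vc{w}_n)\}_{n\le k}$ is $H^s$-bounded uniformly in $k$, which the current proof does not contain.
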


\begin{proof}
Let $P^l$ be a projection on ${\rm lin}\{\ten{\zeta}_1,\ldots,\ten{\zeta}_l\}$, $P^l(\ten{v}):=\sum_{i=1}^{l}(\ten{v},\ten{\zeta}_i)_{\ten{D}}\ten{\zeta}_i$, then $\|P^l\varphi\|_{H^s}\le\|\varphi\|_{H^s}$. 
Let $P^k$ be a projection on ${\rm lin}\{\ten{\varepsilon}(\vc{w}_1),\ldots,\ten{\varepsilon}(\vc{w}_1)\}$, 
$P^k(\ten{v}):=\sum_{i=1}^{k}(\ten{v},\ten{\varepsilon}(\vc{w}_i))_{\ten{D}}\ten{\varepsilon}(\vc{w}_i)$.
Since $P^k$ is the projection of a finite dimensional space, and the dimension of the space is independent of $l$, there exists a constant, also independent of $l$ such that 
% and let $P^l$ be a projection on ${\rm lin}\{\ten{\zeta}_1,\ldots,\ten{\zeta}_l\}$. Furthermore, 
 $\|P^k\varphi\|_{H^s}\le c\|\varphi\|_{H^s}$ %and $\|P^l\varphi\|_{H^s}\le\|\varphi\|_{H^s}$, 
%$H^s(\Omega,\mathcal{S}^3)\subset L^p(\Omega,\mathcal{S}^3)$.
 Let $\varphi\in L^p(0,T,H^{s}(\Omega,\mathcal{S}^3))$ and we may estimate as follows
\begin{equation}
\begin{split}
\int_0^T |\langle (\ten{\varepsilon}^{\bf p}_{k,l})_t, \varphi\rangle |\dt &=
\int_0^T |\langle (\ten{\varepsilon}^{\bf p}_{k,l})_t, (P^k + P^l)\varphi\rangle |\dt
\\ &
\le\int_0^T |\langle (\ten{\varepsilon}^{\bf p}_{k,l})_t, P^k\varphi\rangle |\dt
+\int_0^T |\langle (\ten{\varepsilon}^{\bf p}_{k,l})_t, P^l\varphi\rangle |\dt ,
\end{split}
\end{equation}
where the equality results from orthogonality of subspaces $\mbox{lin}\{\ten{\varepsilon}(\vc{w}_1),\ldots,\ten{\varepsilon}(\vc{w}_k)\}$ and $\mbox{lin}\{\ten{\zeta}_1,\ldots, \ten{\zeta}_l\}$. Then
\begin{equation}
\begin{split}
\int_0^T |\langle (\ten{\varepsilon}^{\bf p}_{k,l})_t, \varphi\rangle |\dt &\le
\int_0^T |\int_\Omega
\ten{G}(\theta_{k,l}+\tilde{\theta},\ten{T}_{k,l}^d +\tilde{\ten{T}}^d) P^k\varphi\dx |\dt
\\ &
\quad +
\int_0^T |\int_\Omega
\ten{G}(\theta_{k,l}+\tilde{\theta},\ten{T}_{k,l}^d +\tilde{\ten{T}}^d) P^l\varphi\dx |\dt
\\ &\le 
\int_0^T\|\ten{G}(\theta_{k,l}+\tilde{\theta},\ten{T}_{k,l}^d +\tilde{\ten{T}}^d)\|_{L^{p'}(\Omega)}
\|P^k\varphi\|_{L^{p}(\Omega)}\dt\\
&
\quad + \int_0^T\|\ten{G}(\theta_{k,l}+\tilde{\theta},\ten{T}_{k,l}^d +\tilde{\ten{T}}^d)\|_{L^{p'}(\Omega)}
\|P^{l}\varphi\|_{L^{p}(\Omega)}\dt
\\ &
\le  \tilde c\int_0^T\|\ten{G}(\theta_{k,l}+\tilde{\theta},\ten{T}_{k,l}^d +\tilde{\ten{T}}^d)\|_{L^{p'}(\Omega)}
\|P^k\varphi\|_{H^{s}(\Omega)}\dt
\\ &
\quad + \tilde c \int_0^T\|\ten{G}(\theta_{k,l}+\tilde{\theta},\ten{T}_{k,l}^d +\tilde{\ten{T}}^d)\|_{L^{p'}(\Omega)}
\|P^{l}\varphi\|_{H^{s}(\Omega)}\dt
\\ &
\le c\tilde c\int_0^T\|\ten{G}(\theta_{k,l}+\tilde{\theta},\ten{T}_{k,l}^d +\tilde{\ten{T}}^d)\|_{L^{p'}(\Omega)}
\|\varphi\|_{H^{s}(\Omega)}\dt
\\ &
\quad + \tilde c\int_0^T\|\ten{G}(\theta_{k,l}+\tilde{\theta},\ten{T}_{k,l}^d +\tilde{\ten{T}}^d)\|_{L^{p'}(\Omega)}
\|\varphi\|_{H^{s}(\Omega)}\dt\\
&\le (1+c)\tilde c\|\ten{G}(\theta_{k,l}+\tilde{\theta},\ten{T}_{k,l}^d +\tilde{\ten{T}}^d)\|_{L^{p'}(0,T,L^{p'}(\Omega))}
\|\varphi\|_{L^p(0,T,H^{s}(\Omega))},\\
\end{split}\end{equation}
where  $\tilde c$ is an optimal embedding  constant of $H^s(\Omega,\mathcal{S}^3)\subset L^2(\Omega,\mathcal{S}^3)$. %comes from the embedding 
Consequently, there exists $C>0$ such that 
\begin{equation}
\sup_{\varphi\in L^p(0,T,H^{s}(\Omega))\atop
 \|\varphi\|_{L^p(0,T,H^{s}(\Omega))}\le 1 }\int_0^T
|\langle (\ten{\varepsilon}^{\bf p}_{k,l})_t, \varphi\rangle |\dt\le C
\end{equation}
and hence sequence $\{(\ten{\varepsilon}^{\bf p}_{k,l})_t\}$ is uniformly bounded in 
${L^{p'}(0,T,(H^{s}(\Omega,\mathcal{S}^3))')}$
\end{proof}

\begin{lemat}\label{LinftyL1}
The sequence $\{\theta_{k,l}\}$ is uniformly bounded in $L^\infty(0,T;L^1(\Omega))$ with respect to $k$ and $l$.
\end{lemat}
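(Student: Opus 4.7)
The plan is to exploit the observation that the constant function is an eigenfunction of the Neumann Laplacian. Specifically, the first element of the basis $\{v_m\}$ can be taken as $v_1 \equiv |\Omega|^{-1/2}$ with eigenvalue $\mu_1 = 0$, so $\nabla v_1 = 0$. Taking $m=1$ in the last equation of \eqref{app_system} eliminates the diffusive term and yields (up to the constant $|\Omega|^{-1/2}$ on both sides)
\begin{equation*}
\frac{d}{dt}\int_\Omega \theta_{k,l}\,\dx = \int_\Omega \mathcal{T}_k\bigl((\ten{T}_{k,l}^d+\tilde{\ten{T}}^d):\ten{G}(\theta_{k,l}+\tilde{\theta},\ten{T}_{k,l}^d+\tilde{\ten{T}}^d)\bigr)\,\dx.
\end{equation*}
By Assumption \ref{ass_G}c), the argument of $\mathcal{T}_k$ is pointwise nonnegative, so $0\le \mathcal{T}_k(\cdot)\le (\cdot)$.

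Next I would invoke Remark \ref{wsp_ogr_T}, which provides a uniform (in $k,l$) $L^1(0,T;L^1(\Omega))$ bound on the integrand before truncation. Integrating the identity above on $[0,t]$ then gives
\begin{equation*}
\int_\Omega \theta_{k,l}(t)\,\dx \le \int_\Omega \theta_{k,l}(0)\,\dx + C,
\end{equation*}
with $C$ independent of $k,l$. The initial term is controlled by $\|\theta_0\|_{L^1(\Omega)}$, since $L^2$-orthogonality of $v_m$ to $v_1$ for $m\ge 2$ forces $\int_\Omega v_m\,\dx = 0$, so that only the $\beta^1_{k,l}(0)$ coefficient contributes and $\int_\Omega \theta_{k,l}(0)\,\dx = \int_\Omega \mathcal{T}_k(\theta_0)\,\dx \le \|\theta_0\|_{L^1(\Omega)}$.

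The main obstacle is passing from this bound on the integral $\int_\Omega \theta_{k,l}\,\dx$ to a bound on the norm $\|\theta_{k,l}\|_{L^1(\Omega)}$, which amounts to establishing nonnegativity of the approximate temperature. In the continuous setting this follows directly from the maximum principle, since both the initial datum $\mathcal{T}_k(\theta_0)$ and the source $\mathcal{T}_k(\cdot)$ are nonnegative. At the Galerkin level, however, the $L^2$-projection onto $\mathrm{span}\{v_1,\ldots,v_l\}$ generally does not preserve sign, so one must argue more carefully: a natural route is to note that for fixed $k$ the truncated source is bounded (by $k$), so after passing to the limit $l\to\infty$ one obtains a genuine parabolic problem in which the classical maximum principle applies, yielding $\theta_k \ge 0$ and hence an $L^\infty(0,T;L^1(\Omega))$ bound surviving the second limit $k\to\infty$.
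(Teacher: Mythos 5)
Your derivation of the identity
\[
\frac{d}{dt}\int_\Omega \theta_{k,l}\,\dx = \int_\Omega \mathcal{T}_k\bigl((\ten{T}_{k,l}^d+\tilde{\ten{T}}^d):\ten{G}(\theta_{k,l}+\tilde{\theta},\ten{T}_{k,l}^d+\tilde{\ten{T}}^d)\bigr)\,\dx
\]
by testing with the constant eigenfunction is exactly what underlies the one-line estimate the paper records (which it declares ``immediately observed'' and does not prove). You have also put your finger on the genuine subtlety that the paper glosses over: this identity controls $\int_\Omega\theta_{k,l}$, not $\|\theta_{k,l}\|_{L^1(\Omega)}$, and the Galerkin projection destroys sign information.

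The remaining difficulty is in the fix you propose. Passing to the limit $l\to\infty$ and invoking the parabolic maximum principle yields $\theta_k\ge 0$ only if the initial datum $\mathcal{T}_k(\theta_0)$ is nonnegative. But after the reduction of Section~\ref{sec:3}, the function $\theta_0 = \widehat\theta_0 - \tilde\theta_0$ governing the homogeneous problem is a \emph{difference} of two temperatures and may change sign (and Theorem~\ref{thm:main2} only assumes $\theta_0\in L^1(\Omega)$, not $\theta_0\ge 0$). So nonnegativity of $\theta_k$ cannot be deduced this way, and the conclusion $\int_\Omega\theta_k = \|\theta_k\|_{L^1}$ does not follow. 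The clean repair, once one is at the level of the actual heat equation for $\theta_k$ (with the truncated, hence bounded, right-hand side), is the $L^1$ contraction estimate for Neumann parabolic problems: testing with a smoothed $\operatorname{sgn}(\theta_k)$ and letting the smoothing parameter tend to zero (Kato's inequality) gives
\[
\|\theta_k(t)\|_{L^1(\Omega)}\le \|\mathcal{T}_k(\theta_0)\|_{L^1(\Omega)}+\int_0^t\|\mathcal{T}_k(\cdots)(s)\|_{L^1(\Omega)}\,\ds,
\]
with no sign restriction on $\theta_0$. Equivalently, one may split $\theta_k=\theta_k^{(1)}-\theta_k^{(2)}$, where $\theta_k^{(1)}$ carries the nonnegative data $\mathcal{T}_k(\theta_0)^+$ together with the nonnegative source, and $\theta_k^{(2)}$ carries $\mathcal{T}_k(\theta_0)^-$ with zero source; the maximum principle then applies to each piece and the constant test function controls $\|\theta_k^{(1)}\|_{L^1}+\|\theta_k^{(2)}\|_{L^1}\ge \|\theta_k\|_{L^1}$. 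Note finally that either way you obtain the bound only for $\theta_k$, not for $\theta_{k,l}$ as the lemma literally states; this is what is actually used downstream (the appendix invokes the $L^1$ bound at the level of $\theta_k$), but it does mean the statement of the lemma is not proved at the two-parameter Galerkin level.
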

Since it can be immediately observed that 
\begin{equation}
\sup_{0\leq t\leq T}\|\theta_{k,l}(t)\|_{L^1(\Omega)}\leq
C(1+\|\ten{T}_{k,l}^d + \tilde{\ten{T}}^d\|_{L^p(0,T,L^p(\Omega))})+
%\|(\ten{T}_{k,l}^d + \tilde{\ten{T}}^d  ):\ten{G}(\theta_{k,l}+\tilde{\theta},\ten{T}_{k,l}^d+\tilde{\ten{T}}^d)\|_{L^1(0,T,L^1(\Omega))}+
\|\theta_0\|_{L^1(\Omega)}
\nonumber
\end{equation}
%\end{lemat}
and Lemma~\ref{pom_2} holds, we omit the details of the proof. 
 The lemma provides that  the internal energy of $\Omega$ is finite at any time $t\in [0,T]$. It is possible to prove  better estimates for the temperature, however they are uniform only with respect to $l$ and not with respect to $k$. We provide the details in the proceeding lemma.

\begin{lemat}%{Energy estimate for the temperature equation}
There exists a constant $C$, depending on the domain $\Omega$ and the time interval $(0,T)$, such that for every $k\in\mathbb{N}$
\begin{equation}
\begin{split}
\sup_{0\leq t\leq T}&\|\theta_{k,l}(t)\|^2_{L^2(\Omega)} +
\|\theta_{k,l}\|^2_{L^2(0,T,W^{1,2}(\Omega))} +
\|(\theta_{k,l})_t\|^2_{L^2(0,T,W^{-1,2}(\Omega))}
\\
& \leq C\Big(\|\mathcal{T}_k\Big( (\ten{T}_{k,l}^d + \tilde{\ten{T}}^d  ):\ten{G}(\theta_{k,l}+\tilde{\theta},\ten{T}_{k,l}^d+\tilde{\ten{T}}^d) \Big)\|^2_{L^2(0,T,L^2(\Omega))}+
\|\mathcal{T}_k(\theta_0)\|_{L^2(\Omega)}^2\Big).
\label{numer}
\end{split}
\end{equation}
%where the function $\theta_0$ is an initial condition for temperatures.
\label{lm:7}
\end{lemat}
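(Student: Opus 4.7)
The plan is a standard energy estimate performed at the Galerkin level, namely to multiply the fifth equation of \eqref{app_system} by $\beta_{k,l}^m(t)$ and sum over $m=1,\dots,l$. Since $\theta_{k,l}=\sum_{m=1}^l \beta_{k,l}^m v_m$ and $\{v_m\}$ is orthonormal in $L^2(\Omega)$ and orthogonal in $W^{1,2}_n(\Omega)$, this produces the identity
\begin{equation}
\frac{1}{2}\frac{d}{dt}\|\theta_{k,l}\|_{L^2(\Omega)}^2+\|\nabla\theta_{k,l}\|_{L^2(\Omega)}^2=\int_\Omega \mathcal{T}_k\bigl((\ten{T}_{k,l}^d+\tilde{\ten{T}}^d):\ten{G}(\theta_{k,l}+\tilde{\theta},\ten{T}_{k,l}^d+\tilde{\ten{T}}^d)\bigr)\,\theta_{k,l}\dx,
\end{equation}
valid for a.e. $t\in(0,T)$. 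Integrating over $(0,t)$ and using the Cauchy--Schwarz and Young inequalities on the right-hand side (splitting $\|\theta_{k,l}\|_{L^2}$ absorbed into the left and the truncated source term) gives, together with the initial condition $\theta_{k,l}(0)=\sum_{m=1}^l(\mathcal{T}_k(\theta_0),v_m)v_m$ and $\|\theta_{k,l}(0)\|_{L^2}\le \|\mathcal{T}_k(\theta_0)\|_{L^2}$, the first two terms of the claimed estimate after a short application of the Gronwall lemma.

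For the estimate on $(\theta_{k,l})_t$ in $L^2(0,T;W^{-1,2}(\Omega))$ the plan is to argue by duality. Let $P^l$ be the $L^2$-orthogonal projection onto $\mathrm{lin}\{v_1,\dots,v_l\}$. Since the $v_m$ are Neumann Laplace eigenfunctions and are orthogonal in $W^{1,2}_n(\Omega)$ with the inner product induced by the Dirichlet form, $P^l$ is also a contraction in $W^{1,2}(\Omega)$. For any $\varphi\in W^{1,2}(\Omega)$, since $(\theta_{k,l})_t$ lives in the finite dimensional subspace, the equation \eqref{app_system} yields
\begin{equation}
\langle(\theta_{k,l})_t,\varphi\rangle=\langle(\theta_{k,l})_t,P^l\varphi\rangle=-\int_\Omega\nabla\theta_{k,l}\cdot\nabla P^l\varphi\dx+\int_\Omega\mathcal{T}_k(\cdots)\,P^l\varphi\dx,
\end{equation}
which, combined with the $W^{1,2}$ contraction property of $P^l$ and the Cauchy--Schwarz inequality, gives the pointwise estimate $\|(\theta_{k,l})_t\|_{W^{-1,2}(\Omega)}\le \|\nabla\theta_{k,l}\|_{L^2(\Omega)}+\|\mathcal{T}_k(\cdots)\|_{L^2(\Omega)}$. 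Squaring and integrating in time then furnishes the third term of the estimate, using the bound on $\|\nabla\theta_{k,l}\|_{L^2(L^2)}$ obtained in the first step.

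The only mildly delicate point is the duality argument for the time derivative: one needs that the projection on the Galerkin basis of the temperature is uniformly bounded in $W^{1,2}(\Omega)$, which requires the orthogonality of the Neumann eigenfunctions both in $L^2(\Omega)$ and in $W^{1,2}_n(\Omega)$ (the space where they are defined). Once this is in place, the rest of the proof is a direct energy computation. Note that the estimate is uniform in $l$ but not in $k$, because the right-hand side of the heat equation is truncated at level $k$ and its $L^2(0,T;L^2(\Omega))$ norm blows up as $k\to\infty$; this is precisely why the passage $l\to\infty$ can be carried out via standard compactness, whereas the passage $k\to\infty$ later requires the Boccardo--Gallou\"et truncation technique.
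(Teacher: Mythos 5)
Your proposal is correct and is precisely the standard Galerkin energy estimate that the paper invokes by citing Evans rather than writing out; in particular the duality argument for $(\theta_{k,l})_t$ via the $W^{1,2}$-contractivity of the projection $P^l$ (which holds because the $v_m$ are simultaneously $L^2$- and $W^{1,2}_n$-orthogonal Neumann eigenfunctions) is exactly the step Evans uses. The paper supplies no proof beyond this reference, so there is nothing to compare against beyond noting that your Gronwall step is indeed needed (Poincar\'e is unavailable under Neumann conditions) and that your closing remark on $l$- versus $k$-uniformity correctly identifies why this bound suffices for the limit $l\to\infty$ but not for $k\to\infty$.
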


%\begin{proof}
The proof follows from  the standard tools for  parabolic equations, see e.g. Evans \cite{Evans}.

\begin{uwaga}
The uniform boundedness of solutions (Lemma \ref{pom_2} and Lemma \ref{lm:7}) implies the global existence of approximate solutions, i.e. existence of solutions $\{\beta_{k,l}^m(t),\gamma_{k,l}^n(t),\delta_{k,l}^m(t)\}$ on the whole time interval $[0,T]$ for each $n=1,...,k$ and $m=1,...,l$. Moreover, there exist global solutions $\{\alpha_{k,l}^n(t)\}$ for all $n=1,...,k$.
\end{uwaga}

\subsection{Limit passage $l\to\infty$ and uniform estimates.%Passing to the limit with Galerkin approximation of temperature
}
\label{sec:7}
Before we pass to the limit let us multiply the system \eqref{app_system} by  smooth time-dependent functions, integrate over $[0,T]$ and then %using the density arguments 
rewrite the system as follows
\begin{equation}\label{58}
\begin{split}
\int_0^T\int_{\Omega}\ten{T}_{k,l}:\nabla\vc{w}_n  \varphi_1(t) \dxdt 
&= 0, \quad n=1,\dots, k %\int_0^T\int_{\Omega}\vc{f}\cdot \vc{\varphi} \dxdt ,
%\\
%\ten{T} = \ten{D}(\ten{\varepsilon} -\ten{\varepsilon}^{\bf p} ),
%\\
%- \int_0^T\int_{\Omega} \ten{\varepsilon}^{\bf p}:\ten{\Phi}_t  \dxdt -
%\int_{\Omega} \ten{\varepsilon}^{\bf p}_0(x):\ten{\Phi}(x,0) \dx &=
%\int_0^T\int_{\Omega} \ten{G}(\theta,\ten{T}^d):\ten{\Phi} \dxdt ,
\end{split}
\end{equation}
\begin{equation}\label{59}\begin{split}
\int_0^T\int_{\Omega}(\ten{\varepsilon}^{\bf p}_{k,l})_t &: \ten{D}\ten{\varepsilon}(\vc{w}_n) \varphi_2(t)\dxdt \\&= 
\int_0^T\int_{\Omega}\ten{G}(\theta_{k,l} + \tilde{\theta},  \ten{T}^d_{k,l} + \tilde{\ten{T}}^d  ) : \ten{D}\ten{\varepsilon}(\vc{w}_n) 
\varphi_2(t)\dxdt, 
\quad n=1,...,k ,
\\
\int_0^T\int_{\Omega}(\ten{\varepsilon}^{\bf p}_{k,l})_t &: \ten{D}\ten{\zeta}^k_m \varphi_3(t)\dxdt \\&= 
\int_0^T\int_{\Omega}\ten{G}(\theta_{k,l} + \tilde{\theta},  \ten{T}^d_{k,l} + \tilde{\ten{T}}^d  ) : \ten{D}\ten{\zeta}^k_m \varphi_3(t)\dxdt, 
\quad  m=1,...,l ,
\end{split}\end{equation}
%where 
%\begin{equation}
%\ten{T}=\ten{D}(\ten{\varepsilon}(\vc{u}) - \ten{\varepsilon}^{\bf p}),
%\end{equation}
and for $m=1,\ldots,l$
\begin{equation}\label{60}
\begin{split}
&-\int_0^T\int_{\Omega} \theta_{k,l}\varphi_4'(t) v_m \dxdt -
\int_{\Omega} \theta_0(x)\varphi_4(0) v_m \dx   + 
\int_0^T\int_{\Omega}  \nabla\theta_{k,l}\cdot \varphi_4(t)\nabla v_m  \dxdt\\ & = 
\int_0^T\int_{\Omega} \mathcal{T}_k\left((\ten{T}^d_{k,l}+\tilde{\ten{T}}^d):\ten{G}(\theta_{k,l}+\tilde{\theta},\ten{T}^d_{k,l}+\tilde{\ten{T}}^d)\right)\varphi_4(t)v_m \dxdt,
%\\
%-\int_0^T \int_{\Omega} S(\theta) \phi_t \dxdt -
%\int_{\Omega} S(\theta_0(x))\phi(0,x) \dx &
%+ \int_0^T \int_{\Omega} S'(\theta)\nabla \theta\cdot\nabla \phi \dxdt 
%\\
%{\color{red} - \int_0^T \int_{\partial\Omega} S'(\theta) g_{\theta}\phi \dxdt}
%+ \int_0^T \int_{\Omega} S''(\theta) |\nabla\theta|^2 \phi \dxdt
%& =
%\int_0^T\int_{\Omega} \ten{T}^d:\ten{G}(\theta,\ten{T}^d) S'(\theta) \phi \dxdt,
\end{split}
\end{equation}
holds for every test functions ${\varphi}_1, {\varphi}_2, {\varphi}_3\in C^{\infty}([0,T])$ and $\varphi_4\in C^{\infty}_c([-\infty,T))$. 

Firstly, we pass to the limit with $l \rightarrow \infty$ - the Galerkin approximation of temperature. From the previous section we get uniform boundedness with respect to $l$ for appropriate sequences. Then at least for a subsequence, but still denoted by the index $l$, we get the following convergences

\begin{equation}
\begin{array}{cl}
%\vc{u}_{k,l}\rightharpoonup \vc{u}_k & \mbox{weakly in } L^{p'}(0,T,W^{1,p'}_0(\Omega,\mathbb{R}^3)),\\
\ten{T}_{k,l}\rightharpoonup \ten{T}_k  &  \mbox{weakly in }   L^2(0,T,L^2(\Omega,\mathcal{S}^3)),\\
\ten{T}^d_{k,l}\rightharpoonup \ten{T}^d_k  &  \mbox{weakly in }   L^p(0,T,L^p(\Omega,\mathcal{S}^3_d)),\\
\ten{G}(\tilde{\theta} + \theta_{k,l},\tilde{\ten{T}}^d + \ten{T}_{k,l}^d)\rightharpoonup \ten{\chi}_k  & \mbox{weakly in } L^{p'}(0,T,L^{p'}(\Omega,\mathcal{S}^3_d)), \\
\theta_{k,l}\rightharpoonup \theta_k  &  \mbox{weakly in }   L^2(0,T,W^{1,2}(\Omega)),\\
\theta_{k,l}\rightarrow \theta_k  &  \mbox{a.e. in } \Omega \times (0,T),\\
(\ten{\varepsilon}^{\bf p}_{k,l})_t\rightharpoonup(\ten{\varepsilon}^{\bf p}_{k})_t&  \mbox{weakly in }   
L^{p'}(0,T,(H^s(\Omega,\mathcal{S}^3))').
\end{array}
\end{equation}
Passing now to the limit in \eqref{58}-\eqref{59} yields
\begin{equation}\label{limit1}
\begin{split}
\int_0^T\int_{\Omega}\ten{T}_{k}:\nabla\vc{w}_n  \varphi_1(t) \dxdt 
&= 0, \quad n=1,\dots, k %\int_0^T\int_{\Omega}\vc{f}\cdot \vc{\varphi} \dxdt ,
%\\
%\ten{T} = \ten{D}(\ten{\varepsilon} -\ten{\varepsilon}^{\bf p} ),
%\\
%- \int_0^T\int_{\Omega} \ten{\varepsilon}^{\bf p}:\ten{\Phi}_t  \dxdt -
%\int_{\Omega} \ten{\varepsilon}^{\bf p}_0(x):\ten{\Phi}(x,0) \dx &=
%\int_0^T\int_{\Omega} \ten{G}(\theta,\ten{T}^d):\ten{\Phi} \dxdt ,
\end{split}
\end{equation}
\begin{equation}\label{limit2}\begin{split}
\int_0^T\int_{\Omega}(\ten{\varepsilon}^{\bf p}_{k})_t : \ten{D}\ten{\varepsilon}(\vc{w}_n) \varphi_2(t)\dxdt = 
\int_0^T\int_{\Omega}\ten{\chi}_{k}: \ten{D}\ten{\varepsilon}(\vc{w}_n) 
\varphi_2(t)\dxdt, 
\quad n=1,...,k ,
\\
\int_0^T\int_{\Omega}(\ten{\varepsilon}^{\bf p}_{k})_t : \ten{D}\ten{\zeta}^k_m \varphi_3(t)\dxdt = 
\int_0^T\int_{\Omega}\ten{\chi}_{k}  : \ten{D}\ten{\zeta}^k_m \varphi_3(t)\dxdt, 
\quad  m\in\mathbb{N},
\end{split}\end{equation}
%where 
%\begin{equation}
%\ten{T}=\ten{D}(\ten{\varepsilon}(\vc{u}) - \ten{\varepsilon}^{\bf p}),
%\end{equation}
%and for $m=1,\ldots,l$
%\begin{equation}
%\begin{split}
%-\int_0^T\int_{\Omega} \theta_{k}\varphi_4'(t) v_m \dxdt -
%\int_{\Omega} \theta_0(x)\varphi_4(0) v_m \dx   + 
%\int_0^T\int_{\Omega}  \nabla\theta_{k}\cdot \varphi_4(t)\nabla v_m  \dxdt\\ -
%\int_0^T\int_{\partial\Omega}g_{\theta}\varphi_4(t)v_m  \dxdt = 
%\int_0^T\int_{\Omega} \mathcal{T}_k\left((\ten{T}^d_{k}+\tilde{\ten{T}}^d):\ten{G}(\theta_{k}+\tilde{\theta},\ten{T}^d_{k})\right)\varphi_4(t)v_m \dxdt,
%%\\
%%-\int_0^T \int_{\Omega} S(\theta) \phi_t \dxdt -
%%\int_{\Omega} S(\theta_0(x))\phi(0,x) \dx &
%%+ \int_0^T \int_{\Omega} S'(\theta)\nabla \theta\cdot\nabla \phi \dxdt 
%%\\
%%{\color{red} - \int_0^T \int_{\partial\Omega} S'(\theta) g_{\theta}\phi \dxdt}
%%+ \int_0^T \int_{\Omega} S''(\theta) |\nabla\theta|^2 \phi \dxdt
%%& =
%%\int_0^T\int_{\Omega} \ten{T}^d:\ten{G}(\theta,\ten{T}^d) S'(\theta) \phi \dxdt,
%\end{split}
%\end{equation}
holds for every test functions ${\varphi}_1, {\varphi}_2, {\varphi}_3\in C^{\infty}([0,T])$.
% and $\varphi_4\in C^{\infty}_c([-\infty,T))$. 
By the density of $\mbox{lin}\{\ten{\zeta}^k_m\}_{m=1}^\infty$ in $L^{p}(\Omega,\mathcal{S}^3)$ we conclude that 
\begin{equation}\label{65}
\int_0^T\int_{\Omega}(\ten{\varepsilon}^{\bf p}_{k})_t : \ten{\varphi}\dxdt = 
\int_0^T\int_{\Omega}\ten{\chi}_{k}  :  \ten{\varphi}\dxdt
 \end{equation}
holds for all $ \ten{\varphi}\in C^\infty([0,T],L^{p}(\Omega,\mathcal{S}^3))$ and then also for all 
$ \ten{\varphi}\in L^{p}(0,T;L^{p}(\Omega,\mathcal{S}^3))$.
In the rest of this section we identify the weak limit of the nonlinear term $\ten{\chi}_k$ and then show the convergence of  
$$\int_0^T\int_\Omega\mathcal{T}_k\left(\ten{G}(\tilde{\theta} + \theta_{k,l},\tilde{\ten{T}}^d + \ten{T}_{k,l}^d):(\tilde{\ten{T}}^d+\ten{T}^d_{k,l})\right)\dxdt$$ what shall allow to pass to the limit in \eqref{60}.
%This product is the right hand side of the heat equation, hence its convergence  is important to pass to the limit with temperature approximation and further to get the existence of a limit of approximate solution.

%\subsection{Uniform estimates with respect to $l$}

\begin{lemat}
The sequence $\{\ten{\varepsilon}^{\bf p}_{k}\}$ is uniformly bounded in $W^{1,p'}(0,T,L^{p'}(\Omega,\mathcal{S}^3))$ with respect to $k$. %{\em Potrzebujemy dodatkowego założenia na funkcję $G$ względem pierwszej zmiennej aby mieć jednostajną ograniczoność z względu na $k$}
\label{wsp_org_ep}
\end{lemat}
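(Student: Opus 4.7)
The plan is to read off the bound on the time derivative directly from relation \eqref{65} and the uniform $L^{p'}$-bound on $\ten{G}$, and then recover the bound on $\ten{\varepsilon}^{\bf p}_k$ itself by integrating in time against bounded initial data.

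First I would observe that Remark \ref{wsp_ogr_T} already gives the uniform bound
\[
\|\ten{G}(\theta_{k,l}+\tilde{\theta},\ten{T}_{k,l}^d+\tilde{\ten{T}}^d)\|_{L^{p'}(0,T,L^{p'}(\Omega,\mathcal{S}^3))}\le C
\]
with $C$ independent of $k$ and $l$. Passing to the limit $l\to\infty$ and using weak lower semicontinuity of the $L^{p'}$-norm, the weak limit $\ten{\chi}_k$ satisfies the same bound. Identity \eqref{65} then asserts that, as elements of the dual of $L^p(0,T;L^p(\Omega,\mathcal{S}^3))$, the distributional time derivative $(\ten{\varepsilon}^{\bf p}_k)_t$ coincides with $\ten{\chi}_k$. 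Hence $(\ten{\varepsilon}^{\bf p}_k)_t\in L^{p'}(0,T,L^{p'}(\Omega,\mathcal{S}^3))$ with
\[
\|(\ten{\varepsilon}^{\bf p}_k)_t\|_{L^{p'}(0,T,L^{p'}(\Omega,\mathcal{S}^3))}=\|\ten{\chi}_k\|_{L^{p'}(0,T,L^{p'}(\Omega,\mathcal{S}^3))}\le C.
\]

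Next, for the bound on $\ten{\varepsilon}^{\bf p}_k$ itself, the initial condition $\ten{\varepsilon}^{\bf p}_0\in L^2(\Omega,\mathcal{S}^3_d)$ is, after projection onto the Galerkin subspace, uniformly bounded in $L^2(\Omega)$; since $p\ge 2$ implies $p'\le 2$ and $\Omega$ is bounded, continuous embedding gives a uniform bound in $L^{p'}(\Omega)$ as well. Passing to the $l$-limit preserves this, so $\|\ten{\varepsilon}^{\bf p}_k(0)\|_{L^{p'}(\Omega)}\le C$ uniformly in $k$. Writing
\[
\ten{\varepsilon}^{\bf p}_k(t)=\ten{\varepsilon}^{\bf p}_k(0)+\int_0^t (\ten{\varepsilon}^{\bf p}_k)_s\,\mathrm{d}s,
\]
Minkowski's and Hölder's inequalities yield
\[
\|\ten{\varepsilon}^{\bf p}_k(t)\|_{L^{p'}(\Omega)}\le \|\ten{\varepsilon}^{\bf p}_k(0)\|_{L^{p'}(\Omega)}+T^{1/p}\|(\ten{\varepsilon}^{\bf p}_k)_t\|_{L^{p'}(0,T,L^{p'}(\Omega))}\le C,
\]
uniformly in $k$ and $t\in[0,T]$, and therefore $\ten{\varepsilon}^{\bf p}_k\in L^\infty(0,T,L^{p'}(\Omega,\mathcal{S}^3))\hookrightarrow L^{p'}(0,T,L^{p'}(\Omega,\mathcal{S}^3))$ with a uniform bound. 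Combining this with the derivative bound concludes the proof.

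The only delicate point is the identification step: interpreting \eqref{65}, which a priori tests against $L^p$-functions, as an equality between $(\ten{\varepsilon}^{\bf p}_k)_t$ (previously only known to lie in the larger space $L^{p'}(0,T,(H^s)')$ from Lemma \ref{wsp_org_epa}) and the $L^{p'}$-function $\ten{\chi}_k$. However, since $H^s(\Omega,\mathcal{S}^3)$ is dense in $L^p(\Omega,\mathcal{S}^3)$ and the right-hand side of \eqref{65} is a continuous functional on $L^p(0,T,L^p(\Omega,\mathcal{S}^3))$, the extension is automatic, so there is no real obstacle and the statement follows.
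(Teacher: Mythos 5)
Your proof is correct and follows essentially the same route as the paper: bound $(\ten{\varepsilon}^{\bf p}_k)_t$ in $L^{p'}(0,T,L^{p'}(\Omega,\mathcal{S}^3))$ using the identification from \eqref{65} together with the uniform $L^{p'}$-bound on the nonlinearity (the paper writes $\ten{G}(\theta_k+\tilde{\theta},\ten{T}_k^d+\tilde{\ten{T}}^d)$ where, strictly at this point, it should be $\ten{\chi}_k$ controlled by weak lower semicontinuity, so your explicit invocation of weak lower semicontinuity is in fact the cleaner reading), and then recover the bound on $\ten{\varepsilon}^{\bf p}_k$ itself by integrating in time and using the $L^2(\Omega)\hookrightarrow L^{p'}(\Omega)$ embedding for the projected initial datum. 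No gaps.
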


\begin{proof}
%Using the ordinary equation for the visco-elastic strain tensor we show the regularity of $\ten{\varepsilon}^{\bf p}_{k,l}$. 
%Let us consider the equation for the evolution of the visco-elastic strain tensor
%\begin{equation}
%(\ten{\varepsilon}^{\bf p}_{k})_t= \ten{G}(\theta_{k} + \tilde{\theta},\ten{T}_{k}^d+\tilde{\ten{T}}^d).
%\nonumber
%\end{equation}
By Assumption 1b %From the assumptions regarding the function $\ten{G}(\cdot,\cdot)$, namely $|\ten{G}(\cdot,\ten{T}^d)|\leq C(1+|\ten{T}^d|^{p-1})$ 
and the fact that the constant $C$ is independent of temperature, we get

%{\color{red} Czy nie powinno być poniżej $(...)_s$, żeby wszystko formalnie się zgadzało???}
\begin{equation}
\ten{\varepsilon}^{\bf p}_{k}(x,t) = \ten{\varepsilon}^{\bf p}_{k}(x,0) + \int_0^t(\ten{\varepsilon}^{\bf p}_{k}(x,s))_s \ds .
\nonumber
\end{equation}
Hence
\begin{equation}
|\ten{\varepsilon}^{\bf p}_{k}|^{p'}(x,t) \leq c|\ten{\varepsilon}^{\bf p}_{k}|^{p'}(x,0) + ct^{1/p} \int_0^t|(\ten{\varepsilon}^{\bf p}_{k})_s|^{p'}(x,s) \ds
\nonumber
\end{equation}
and consequently
\begin{equation}
\begin{split}
\int_0^T\int_{\Omega}|\ten{\varepsilon}^{\bf p}_{k}|^{p'}(x,t) \dxdt &\leq
c\int_0^T\int_{\Omega}|\ten{\varepsilon}^{\bf p}_{k}|^{p'}(x,0) \dxdt  + ct^{1/p} \int_0^T\int_{\Omega}\int_0^t|(\ten{\varepsilon}^{\bf p}_{k})_s|^{p'}(x,s)\ds \dxdt
\\
&\leq C(T) (1 + \int_{\Omega}\int_0^T|\ten{G}(\theta_{k}+\tilde{\theta},\ten{T}_{k}^d +\tilde{\ten{T}}^d )|^{p'}) \ds \dx
 \\
&\leq C(T)(1 + \int_0^T\int_{\Omega}|\ten{T}_{k}^d+\tilde{\ten{T}}^d|^p) \dxdt .
\nonumber
\end{split}
\end{equation}
It follows from the previous lemma that the right hand side is uniformly bounded.
\end{proof}
\begin{lemat}
The sequence $\{\vc{u}_{k}\}$ is uniformly bounded in $L^{p'}(0,T,W^{1,p'}_0(\Omega,\mathbb{R}^3))$ with respect to $k$.
\label{wsp_org_u}
\end{lemat}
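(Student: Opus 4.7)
The plan is to recover $\vc{u}_k$ from the algebraic decomposition of the Cauchy stress tensor and then to invoke Korn's inequality on $W^{1,p'}_0(\Omega,\mathbb{R}^3)$. First I would note that, by construction, each Galerkin approximant $\vc{u}_{k,l}=\sum_{n=1}^k\alpha^n_{k,l}\vc{w}_n$ is a linear combination of the eigenfunctions $\vc{w}_n$ of $-\div\ten{D}\ten{\varepsilon}(\cdot)$, all of which lie in $W^{1,2}_0(\Omega,\mathbb{R}^3)$. Hence every $\vc{u}_{k,l}$ satisfies the homogeneous Dirichlet condition, and its weak $l$-limit $\vc{u}_k$ inherits it, so Korn's inequality is applicable:
$$\|\vc{u}_k(t)\|_{W^{1,p'}(\Omega)} \le C\|\ten{\varepsilon}(\vc{u}_k)(t)\|_{L^{p'}(\Omega)}\quad\text{for a.e. }t\in(0,T).$$

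Next, using that $\ten{D}$ is bounded, linear and positively definite, hence invertible, I would rewrite the constitutive relation as
$$\ten{\varepsilon}(\vc{u}_k)=\ten{D}^{-1}\ten{T}_k + \ten{\varepsilon}^{\bf p}_k,$$
which yields the pointwise-in-time estimate
$$\|\ten{\varepsilon}(\vc{u}_k)(t)\|_{L^{p'}(\Omega)} \le C\|\ten{T}_k(t)\|_{L^{p'}(\Omega)} + \|\ten{\varepsilon}^{\bf p}_k(t)\|_{L^{p'}(\Omega)}.$$
Since $p\ge 2$ forces $p'\le 2$ and $\Omega$ is bounded, the continuous embedding $L^2(\Omega)\hookrightarrow L^{p'}(\Omega)$ is in force. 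The Remark following Lemma \ref{pom_2} provides $\{\ten{T}_k\}$ uniformly bounded in $L^\infty(0,T;L^2(\Omega,\mathcal{S}^3))$, hence \emph{a fortiori} in $L^{p'}(0,T;L^{p'}(\Omega,\mathcal{S}^3))$, while Lemma \ref{wsp_org_ep} controls $\{\ten{\varepsilon}^{\bf p}_k\}$ uniformly in $L^{p'}(0,T;L^{p'}(\Omega,\mathcal{S}^3))$.

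Taking the $L^{p'}$-norm in time and chaining the Korn bound with the previous inequality, I expect to arrive at
$$\|\vc{u}_k\|_{L^{p'}(0,T;W^{1,p'}_0(\Omega))}\le C\bigl(\|\ten{T}_k\|_{L^{p'}(0,T;L^2(\Omega))}+\|\ten{\varepsilon}^{\bf p}_k\|_{L^{p'}(0,T;L^{p'}(\Omega))}\bigr)\le C,$$
with $C$ independent of $k$. I do not anticipate a real obstacle here: the argument is an algebraic rearrangement of the constitutive law combined with Korn's inequality and a priori bounds already established in the preceding lemmas. The only point requiring attention is the persistence of the homogeneous boundary condition under the weak $l\to\infty$ passage, which is immediate from the structure of the Galerkin basis $\{\vc{w}_n\}\subset W^{1,2}_0(\Omega,\mathbb{R}^3)$.
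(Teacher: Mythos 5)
Your proof is correct and follows essentially the same route as the paper: decompose $\ten{\varepsilon}(\vc{u}_k)$ via the constitutive law, control $\ten{T}_k$ and $\ten{\varepsilon}^{\bf p}_k$ using the a priori bounds of Lemma~\ref{pom_2} (and its Remark) and Lemma~\ref{wsp_org_ep}, pass from $L^2$ to $L^{p'}$ on the bounded cylinder, and conclude with Korn's inequality. The only difference is cosmetic: you invoke invertibility of $\ten{D}$ to write $\ten{\varepsilon}(\vc{u}_k)=\ten{D}^{-1}\ten{T}_k+\ten{\varepsilon}^{\bf p}_k$, while the paper uses the triangle inequality with the positive definiteness of $\ten{D}$ directly; you also make explicit the (correct and worthwhile) observation that the homogeneous Dirichlet condition survives the $l\to\infty$ weak limit so that Korn's inequality in $W^{1,p'}_0$ applies.
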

\begin{proof}
In view of Lemma \ref{pom_2} the sequence $\{\ten{T}_{k}\}$ is uniformly bounded in $L^2(0,T,L^2(\Omega,\mathcal{S}^3))$. Using the triangle inequality and boundedness of the operator $\ten{D}$ we obtain
%the potential energy of the system is bounded in each time step. Hence, we obtain that the sequence $\{\ten{T}_{k}\}$ is uniformly bounded in $L^2(0,T,L^2(\Omega,\mathcal{S}^3))$.
%\marginpar{to jeszcze rozwinac. lepiej?} 
%Using the triangle inequality, we obtain}
\begin{equation}
|\ten{\varepsilon}(\vc{u}_{k})|^{p'}\leq c|\ten{\varepsilon}(\vc{u}_{k}) - \ten{\varepsilon}^{\bf p}_{k}|^{p'} + c|\ten{\varepsilon}^{\bf p}_{k}|^{p'}
\leq c|\ten{T}_{k}|^{p'} + c|\ten{\varepsilon}^{\bf p}_{k}|^{p'}.
\end{equation}
Integrating over $\Omega\times (0,T)$ and using that $1<p'\leq 2$ we get
\begin{equation}
\begin{split}
\int_0^T\int_{\Omega} |\ten{\varepsilon}(\vc{u}_{k})|^{p'} \dxdt
& \leq c \int_0^T\int_{\Omega} |\ten{T}_{k}|^{p'} \dxdt
+ c\int_0^T\int_{\Omega} |\ten{\varepsilon}^{\bf p}_{k}|^{p'} \dxdt
\\
& \leq c \int_0^T\int_{\Omega} |\ten{T}_{k}|^{2} \dxdt
+ c\int_0^T\int_{\Omega} |\ten{\varepsilon}^{\bf p}_{k}|^{p'} \dxdt
\\
& \leq c \|\ten{T}_{k}\|^2_{L^2(0,T,L^2(\Omega))}
+ c\|\ten{\varepsilon}^{\bf p}_{k}\|^{p'}_{L^{p'}(0,T,L^{p'}(\Omega))}  .
\end{split}
\end{equation}
%which completes the proof. 
%It follows from Lemma \ref{wsp_ogr_T} and Lemma \ref{wsp_org_ep} that the sequence $\{\ten{\varepsilon}_{k}\}$ is uniformly bounded in $L^2(0,T,L^2(\Omega,\mathcal{S}^3))$. 
The tensor $\ten{\varepsilon}(\vc{u}_{k})$ is the symmetric gradient of the displacement, thus using the  Korn inequality 
(cf.~\cite[Theorem 1.10]{maleknecas}) we conclude that the sequence $\{\vc{u}_{k}\}$ is uniformly bounded in $L^{p'}(0,T,W^{1,p'}_0(\Omega,\mathbb{R}^3))$. 
% Poincar\'{e} inequality Evans page 271 Theorem 3
% Korn inequality w książce Josefa strona 192 ale brzeg klasy C^{0,1} (strona 17 Valent) , czyli jest lipshitzowski
\end{proof}

\begin{lemat}
The following inequality holds for the solution of approximate system
\begin{equation}
\limsup_{l\rightarrow\infty}\int_{0}^{t}\int_{\Omega}\ten{G}(\tilde{\theta} + \theta_{k,l},\tilde{\ten{T}}^d + \ten{T}^d_{k,l}):\ten{T}^d_{k,l} \dxdt \leq
\int_{0}^{t}\int_{\Omega}\ten{\chi}_k:\ten{T}^d_k \dxdt .
\label{teza-8}
\end{equation}
\label{lm:8}
\end{lemat}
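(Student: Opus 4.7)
Plan. The identity \eqref{ene} established during the proof of Lemma \ref{pom_2}, once integrated in time, gives
\begin{equation*}
\int_0^t\!\int_\Omega \ten{G}(\tilde{\theta}+\theta_{k,l},\tilde{\ten{T}}^d+\ten{T}^d_{k,l}):\ten{T}^d_{k,l}\dxdt = \mathcal{E}_{k,l}(0)-\mathcal{E}_{k,l}(t),
\end{equation*}
where $\mathcal{E}_{k,l}(t):=\mathcal{E}(\ten{\varepsilon}(\vc{u}_{k,l}),\ten{\varepsilon}^{\bf p}_{k,l})(t)$. Passing to $\limsup_{l\to\infty}$, the assertion reduces to three ingredients: (i) $\mathcal{E}_{k,l}(0)\to\mathcal{E}_k(0)$, (ii) $\liminf_{l\to\infty}\mathcal{E}_{k,l}(t)\ge\mathcal{E}_k(t)$ for every $t\in[0,T]$, and (iii) the corresponding energy identity at the $l$-limit level,
$\int_0^t\!\int_\Omega\ten{\chi}_k:\ten{T}^d_k\dxdt=\mathcal{E}_k(0)-\mathcal{E}_k(t).$

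To obtain (iii) I would test \eqref{65} against $\ten{T}^d_k\mathbf{1}_{[0,t]}$, which is admissible since $\ten{T}^d_k\in L^p(0,T;L^p(\Omega,\mathcal{S}^3_d))$ by Remark \ref{wsp_ogr_T}; tracelessness of $\ten{\chi}_k$ and $\ten{\varepsilon}^{\bf p}_k$ lets one replace $\ten{T}^d_k$ by $\ten{T}_k$ on both sides, and \eqref{limit1} combined with $(\vc{u}_k)_t\in\operatorname{span}\{\vc{w}_1,\dots,\vc{w}_k\}$ yields $\int_\Omega\ten{T}_k:\ten{\varepsilon}((\vc{u}_k)_t)\dx=0$, which rewrites $\int_\Omega(\ten{\varepsilon}^{\bf p}_k)_t:\ten{T}_k\dx$ as $-\tfrac{d}{dt}\mathcal{E}_k$. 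Point (i) is immediate from the initial data \eqref{eq:warunki_pocz_app}: in the representation $\ten{T}_{k,l}(0)=-\ten{D}\sum_m\delta^m_{k,l}(0)\ten{\zeta}^k_m$ the coefficients $\delta^m_{k,l}(0)=(\ten{\varepsilon}^{\bf p}_0,\ten{\zeta}^k_m)_{\ten{D}}$ converge in $\ell^2$, so $\ten{T}_{k,l}(0)\to\ten{T}_k(0)$ strongly in $L^2(\Omega,\mathcal{S}^3)$ and $\mathcal{E}_{k,l}(0)$ converges by continuity of $\tfrac12\|\cdot\|^2_{\ten{D}^{-1}}$.

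The main obstacle is (ii): the weak convergence $\ten{T}_{k,l}\rightharpoonup\ten{T}_k$ in $L^2(0,T;L^2(\Omega))$ only delivers the time-integrated form of the inequality, whereas we need it at each fixed $t$. My plan is to upgrade this to $\ten{T}_{k,l}(t)\rightharpoonup \ten{T}_k(t)$ weakly in $L^2(\Omega,\mathcal{S}^3)$ for every $t\in[0,T]$; then the weak lower semicontinuity of $\tfrac12\|\cdot\|^2_{\ten{D}^{-1}}$ yields (ii). For the upgrade I would combine the $L^\infty(0,T;L^2)$ bound on $\ten{T}_{k,l}$ from the Remark after Lemma \ref{pom_2} with a uniform bound on $(\ten{T}_{k,l})_t=\ten{D}\bigl((\ten{\varepsilon}(\vc{u}_{k,l}))_t-(\ten{\varepsilon}^{\bf p}_{k,l})_t\bigr)$ in $L^{p'}(0,T;(H^s(\Omega,\mathcal{S}^3))')$: the first summand lives in the fixed finite-dimensional space $\operatorname{span}\{\ten{\varepsilon}(\vc{w}_n)\}_{n=1}^k$ with coefficients controlled uniformly in $l$ via the ODE \eqref{app_system2} and the uniform $L^{p'}$-bound of $\ten{G}_{k,l}$ from Remark \ref{wsp_ogr_T}, while the second is handled by Lemma \ref{wsp_org_epa}. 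An Aubin–Lions compactness argument then provides strong convergence in $C([0,T];(H^s(\Omega,\mathcal{S}^3))')$, and the uniform $L^2$ bound promotes this to the desired pointwise weak convergence in $L^2(\Omega,\mathcal{S}^3)$, closing the proof.
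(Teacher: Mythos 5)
Your proposal is essentially correct but takes a genuinely different route from the paper at the key step, namely proving pointwise-in-time lower semicontinuity of the energy. The paper deliberately avoids needing $\mathcal{E}_{k,l}(t)\to\mathcal{E}_k(t)$-type information at each fixed $t$: it multiplies the energy balance by the piecewise-linear cutoff $\psi_{\mu,t_2}$ from \eqref{psi-mu}, integrates in time, and observes (via Fubini, \eqref{71}--\eqref{psi}) that the resulting time-average $\frac{1}{\mu}\int_{t_2}^{t_2+\mu}\mathcal{E}_{k,l}\,\mathrm{d}t$ is lower semicontinuous under the weak space--time $L^2((0,T)\times\Omega)$ convergence of $\ten{T}_{k,l}$, which is already available; the final $\mu\to0$ recovers the pointwise statement. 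You instead propose to upgrade to $\ten{T}_{k,l}(t)\rightharpoonup\ten{T}_k(t)$ in $L^2(\Omega)$ for every $t$ by combining the $L^\infty(0,T;L^2)$ bound with a uniform $L^{p'}(0,T;(H^s)')$ bound on $(\ten{T}_{k,l})_t$ (sound: the $(\ten{\varepsilon}(\vc{u}_{k,l}))_t$ piece lives in a fixed finite-dimensional space with coefficients controlled by the uniform $L^{p'}$ bound on $\ten{G}$, and $(\ten{\varepsilon}^{\bf p}_{k,l})_t$ is handled by Lemma \ref{wsp_org_epa}), then apply an Aubin--Lions/Simon argument to get $C([0,T];H^{-s'})$ compactness, and finally use the $L^\infty L^2$ bound to promote this to pointwise weak $L^2(\Omega)$ convergence. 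This buys a stronger convergence statement but requires extra bookkeeping that the $\psi_\mu$ trick sidesteps. Your parts (i) and (iii) coincide with the paper's argument; note, though, that your sketch of (iii) passes over the same mollification-in-time step that the paper makes explicit around \eqref{pierwsze_r1}--\eqref{gran}: because $(\ten{\varepsilon}^{\bf p}_k)_t$ has so little spatial regularity, testing \eqref{limit1} directly with $(\alpha^n_k)_t\mathbf{1}_{[0,t]}$ and then ``rewriting as $-\tfrac{d}{dt}\mathcal{E}_k$'' needs the convolution $\eta_\epsilon$ in time to be rigorous, and the $C_w([0,T];L^2)$ regularity of $\ten{\varepsilon}(\vc{u}_k),\ten{\varepsilon}^{\bf p}_k$ to give meaning to $\mathcal{E}_k(t)$ pointwise. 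With that detail supplied, your plan closes the proof.
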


\begin{proof}
For each $\mu>0, t_2\le T-\mu, s\ge0, $ let $\psi_\mu:\mathbb{R}_+\to\mathbb{R}_+$ be defined as follows
\begin{equation}\label{psi-mu}
\psi_{\mu,t_2}(s)=\left\{
\begin{array}{lcl}
1&{\rm for}&s\in[0,t_2),\\
-\frac{1}{\mu}s+\frac{1}{\mu}t_2+1&{\rm for}&s\in[t_2, t_2+\mu),\\
0&{\rm for}&s\ge t_2+\mu.
\end{array}\right.
\end{equation}

%Initial conditions of approximative solutions converge to the initial conditions of original system. 
%We multiply  \eqref{ene} by $\psi_{\mu,t_2}(t)$ and integrate over $(0,T)$
%We multiply \eqref{ene} by $\frac{1}{\mu}$ and integrate first  over $(0,\tau)$ and  then  over $(t_2, t_2+\mu)$
%\begin{equation}\label{mu}
%\begin{split}
%\frac{1}{\mu}\int_{t_2}^{t_2+\mu}\int_0^\tau&
%\frac{d}{d t} \mathcal{E}(\ten{\varepsilon}(\vc{u}_{k,l}) , \ten{\varepsilon}^{\bf p}_{k,l}) \dt\dtau
%%\int_{\Omega}\ten{D}\big(\ten{\varepsilon}(\vc{u}_{k,l}) - \ten{\varepsilon}^{\bf p}_{k,l}\big)&:\big((\ten{\varepsilon}(\vc{u}_{k,l}) - \ten{\varepsilon}^{\bf p}_{k,l})\big)_t \dx
%%\\
%\\
%& = 
%-
%\frac{1}{\mu}\int_{t_2}^{t_2+\mu}\int_0^\tau\int_{\Omega}\ten{G}(\tilde{\theta} + \theta_{k,l},\tilde{\ten{T}}^d + \ten{T}^d_{k,l}):\ten{T}^d_{k,l}\dx \dt\dtau.
%\end{split}
%\end{equation}
%Let us first concentrate on the right hand side. 
Next we shall use \eqref{ene}  and  multiply it  by $\psi_{\mu,t_2}(t)$ and integrate over $(0,T)$
\begin{equation}\label{mu2}
\begin{split}
\int_{0}^{T}
\frac{d}{d\tau} \mathcal{E}(\ten{\varepsilon}(\vc{u}_{k,l}) , \ten{\varepsilon}^{\bf p}_{k,l}) \,\psi_{\mu,t_2}\dt
%\int_{\Omega}\ten{D}\big(\ten{\varepsilon}(\vc{u}_{k,l}) - \ten{\varepsilon}^{\bf p}_{k,l}\big)&:\big((\ten{\varepsilon}(\vc{u}_{k,l}) - \ten{\varepsilon}^{\bf p}_{k,l})\big)_t \dx
%\\
\
 = 
-
\int_0^T\int_{\Omega}\ten{G}(\tilde{\theta} + \theta_{k,l},\tilde{\ten{T}}^d + \ten{T}^d_{k,l}):\ten{T}^d_{k,l} \,\psi_{\mu,t_2}\dxdt.
\end{split}
\end{equation}
 Let us now integrate by parts the left hand side of \eqref{mu2}
\begin{equation}\label{mu3}
\begin{split}
\int_{0}^{T}
\frac{d}{d\tau} \mathcal{E}(\ten{\varepsilon}(\vc{u}_{k,l}) , \ten{\varepsilon}^{\bf p}_{k,l}) \,\psi_{\mu,t_2}\dt
=\frac{1}{\mu}\int_{t_2}^{t_2+\mu}\mathcal{E}(\ten{\varepsilon}(\vc{u}_{k,l}(t)) , \ten{\varepsilon}^{\bf p}_{k,l}(t)) \dt-
\mathcal{E}(\ten{\varepsilon}(\vc{u}_{k,l}(0)) , \ten{\varepsilon}^{\bf p}_{k,l}(0)).
\end{split}\end{equation}
Passing to the limit in \eqref{mu3} with $l\to\infty$ we obtain
\begin{equation}\label{mu4}
\begin{split}
\liminf\limits_{l\to\infty}\int_{0}^{T}
\frac{d}{d\tau}& \mathcal{E}(\ten{\varepsilon}(\vc{u}_{k,l}) , \ten{\varepsilon}^{\bf p}_{k,l}) \,\psi_{\mu,t_2}\dt\\
&=\liminf\limits_{l\to\infty}\frac{1}{\mu}\int_{t_2}^{t_2+\mu}\mathcal{E}(\ten{\varepsilon}(\vc{u}_{k,l}) , \ten{\varepsilon}^{\bf p}_{k,l}) \dt-
\lim\limits_{l\to\infty}\mathcal{E}(\ten{\varepsilon}(\vc{u}_{k,l}(0)) , \ten{\varepsilon}^{\bf p}_{k,l}(0))\\
&\ge \frac{1}{\mu}\int_{t_2}^{t_2+\mu}\mathcal{E}(\ten{\varepsilon}(\vc{u}_{k}(t)) , \ten{\varepsilon}^{\bf p}_{k}(t)) \dt-
\mathcal{E}(\ten{\varepsilon}(\vc{u}_{k}(0)) , \ten{\varepsilon}^{\bf p}_{k}(0))
\end{split}\end{equation}

Note that the last inequality holds due to the weak lower semicontinuity in 
$L^2(0,T,L^2(\Omega;\mathcal{S}^3))$.
% of $\mathcal{E}$, which is  strongly continuous in $L^2(\Omega)$ convex. 
To complete the proof  we choose in \eqref{limit1} the test functions
 $\varphi_1(t)=((\alpha^n_k)_t*\eta_{\epsilon}\mathbf{1}_{(t_1,t_2)})*\eta_{\epsilon}$,  
 %$\varphi_2(t)=(\gamma^n_{k}*\eta_{\epsilon}\mathbf{1}_{(t_1,t_2)})*\eta_{\epsilon}$ and 
 %$\varphi_3(t)=(\delta^m_{k}*\eta_{\epsilon}\mathbf{1}_{(t_1,t_2)})*\eta_{\epsilon}$ 
and  in \eqref{65} $\ten{\varphi}=(\ten{T}_k^d*\eta_{\epsilon}\mathbf{1}_{(t_1,t_2)})*\eta_{\epsilon}$,
where $\eta_\epsilon$ is a standard mollifier and we mollify with respect to time. Thus we  obtain
\begin{equation}
\begin{split}
\int_0^T\int_{\Omega}  \ten{T}_{k} : \ten{\varepsilon}(((\alpha^n_k)_t*\eta_{\epsilon}\mathbf{1}_{(t_1,t_2)})*\eta_{\epsilon}\vc{w}_n) \dx &= 0,
%\\
%\int_0^T\int_{\Omega}(\ten{\varepsilon}^{\bf p}_{k})_t : \ten{D}\ten{\varepsilon}((\gamma^n_{k}*\eta_{\epsilon}\mathbf{1}_{(t_1,t_2)})*\eta_{\epsilon}\vc{w}_n) \dx &
%\\
%= 
%\int_0^T\int_{\Omega}\ten{\chi}_k : &\ten{D}\ten{\varepsilon}((\gamma^n_{k}*\eta_{\epsilon}\mathbf{1}_{(t_1,t_2)})*\eta_{\epsilon}\vc{w}_n) \dx ,
\\
\int_0^T\int_{\Omega}(\ten{\varepsilon}^{\bf p}_{k})_t : (\ten{T}_k^d*\eta_{\epsilon}\mathbf{1}_{(t_1,t_2)})*\eta_{\epsilon} \dx 
= 
\int_0^T\int_{\Omega}\ten{\chi}_{k} : &(\ten{T}_k^d*\eta_{\epsilon}\mathbf{1}_{(t_1,t_2)})*\eta_{\epsilon}\dx ,
\end{split}
\label{app_system_n}
\end{equation}
%for each $n=1,..,k$ and $m=1,..,l$ and for the test function $\ten{\Phi}$ equal to $\ten{\varepsilon}(\vc{w}_n)$ or $\ten{\zeta}_n$.
for  $n=1,...,k $. Summing \eqref{app_system_n}$_{(1)}$ over $n=1,...,k$ we obtain
\begin{equation}
\int_{t_1}^{t_2} \int_{\Omega}\ten{D}\left(\ten{\varepsilon}(\vc{u}_{k}) - \ten{\varepsilon}^{\bf p}_{k}\right)*\eta_{\epsilon}: (\ten{\varepsilon}(\vc{u}_{k})*\eta_{\epsilon})_t \dxdt =
0.
\label{pierwsze_r1}
\end{equation}
and
%Summing \eqref{app_system_n}$_{(2)}$ and \eqref{app_system_n}$_{(3)}$ over $n=1,...,k$ and $m\in\mathbb{N}$ we get
\begin{equation}
\int_{t_1}^{t_2}\int_{\Omega}(\ten{\varepsilon}^{\bf p}_{k}*\eta_{\epsilon})_t:\ten{T}_{k}*\eta_{\epsilon} \dxdt =
\int_{t_1}^{t_2}\int_{\Omega}\ten{\chi}_{k}*\eta_{\epsilon}:\ten{T}_{k}*\eta_{\epsilon} \dxdt .
\label{drugie_r2}
\end{equation}
Products in \eqref{drugie_r2} are well defined, since for the matrices $\ten{A}\in\mathcal{S}^3_d$ and $\ten{B}\in\mathcal{S}^3$ the equivalence $\ten{A}:\ten{B}^d=\ten{A}:\ten{B}$ holds and the sequence $\{\ten{T}^d_{k}\}$ is uniformly bounded in $L^{p'}(0,T,L^{p'}(\Omega,\mathcal{S}^3_d))$.
\noindent
Passing with $\epsilon\to0$ %and $t_1\to0$
%Using the same argumentation as in the proof of previous lemma 
we obtain the equality
\begin{equation}
\frac{1}{2}\int_{\Omega}\ten{D}(\ten{\varepsilon}(\vc{u}_k) - \ten{\varepsilon}^{\bf p}_k):(\ten{\varepsilon}(\vc{u}_k) - \ten{\varepsilon}^{\bf p}_k) \dx \Big|_{t_1}^{t_2}
 = - 
\int_{t_1}^{t_2}\int_{\Omega}\ten{\chi}_k:\ten{T}^d_k \dxdt .
\label{granica_l}
\end{equation}
Since $\ten\varepsilon({\vc{u}_k}),  \ten{\varepsilon}^{\bf p}_k\in C_{w}([0,T],L^2(\Omega,\mathcal{S}^3))$, then we may pass
with $t_1\to 0$ and conclude
\begin{equation}\label{gran}
\mathcal{E}(\ten{\varepsilon}(\vc{u}_{k}(t_2)) , \ten{\varepsilon}^{\bf p}_{k}(t_2)) -
\mathcal{E}(\ten{\varepsilon}(\vc{u}_{k}(0)) , \ten{\varepsilon}^{\bf p}_{k}(0))=- 
\int_{0}^{t_2}\int_{\Omega}\ten{\chi}_k:\ten{T}^d_k \dxdt .
\end{equation}
Multiplying \eqref{gran} by  $\frac{1}{\mu}$ and integrating     over the interval $(t_2, t_2+\mu)$ we get
\begin{equation}
\begin{split}
\frac{1}{\mu}\int_{t_2}^{t_2+\mu}\mathcal{E}(\ten{\varepsilon}(\vc{u}_{k}(t)) , \ten{\varepsilon}^{\bf p}_{k}(t)) \dt-
\mathcal{E}(\ten{\varepsilon}(\vc{u}_{k}(0)) , \ten{\varepsilon}^{\bf p}_{k}(0))=-
\frac{1}{\mu}\int_{t_2}^{t_2+\mu}\int_{0}^{\tau}\int_{\Omega}\ten{\chi}_k:\ten{T}^d_k \dxdt \dtau.
\end{split}\end{equation}
For brevity we denote $$F(s):=\int_{\Omega}\ten{\chi}_k:\ten{T}^d_k\dx$$ which is obviously in $L^1(0,T)$. Then we may apply the Fubini theorem
%the right hand side of \eqref{70} 
%can be rewritten as follows
\begin{equation}\label{71}\begin{split}
\frac{1}{\mu}\int_{t_2}^{t_2+\mu}\int_0^\tau F(s) \ds\dtau&=\frac{1}{\mu}\int_{\mathbb{R}^2} %\int_\mathbb{R}
\mathbf{1}_{\{0\le s\le \tau\}}(s)\mathbf{1}_{\{t_2\le \tau\le t_2+\mu\}}(\tau) F(s)\ds\dtau\\
&=\frac{1}{\mu}\int_\mathbb{R} \left(\int_\mathbb{R}
\mathbf{1}_{\{0\le s\le\tau\}}(s)\mathbf{1}_{\{t_2\le \tau\le t_2+\mu\}} (\tau)\dtau\right)F(s)\ds.
\end{split}
\end{equation}
The crucial  observation is that
\begin{equation}\label{psi}
\psi_{\mu,t_2}(s)=\frac{1}{\mu}\int_\mathbb{R}
\mathbf{1}_{\{0\le t\le\tau\}}(t)\mathbf{1}_{\{t_2\le \tau\le t_2+\mu\}} (\tau)\dtau.
\end{equation}
Hence  using \eqref{mu2} and \eqref{mu4} we conclude
\begin{equation}
-\int_{0}^{T}\int_{\Omega}\ten{\chi}_k:\ten{T}^d_k \, \psi_{\mu,t_2}\dxdt \le
\liminf\limits_{l\to\infty}\left( 
-
\int_0^T\int_{\Omega}\ten{G}(\tilde{\theta} + \theta_{k,l},\tilde{\ten{T}}^d + \ten{T}^d_{k,l}):\ten{T}^d_{k,l} \,\psi_{\mu,t_2}\dxdt \right)
\end{equation}
which is nothing else than %\eqref{jedna_nierownosc}. 
\begin{equation}
\limsup_{l\rightarrow\infty}\int_{0}^{T}\int_{\Omega}\ten{G}(\tilde{\theta} + \theta_{k,l},\tilde{\ten{T}}^d + \ten{T}^d_{k,l}):\ten{T}^d_{k,l} \ \psi_{\mu,t_2}\dxdt \leq
\int_{0}^{T}\int_{\Omega}\ten{\chi}_k:\ten{T}^d_k \ \psi_{\mu,t_2}\dxdt .
\label{jedna_nierownosc}
\end{equation}
Observe now that
\begin{equation}
\begin{split}
\limsup\limits_{l\to\infty}&\int_{0}^{t_2}\int_{\Omega}\ten{G}(\tilde{\theta} + \theta_{k,l},\tilde{\ten{T}}^d + \ten{T}^d_{k,l}):\ten{T}^d_{k,l} \dxdt\\
%&=
%\limsup\limits_{l\to\infty}\left(\int_{0}^{t_2}\int_{\Omega}\ten{G}(\tilde{\theta} + \theta_{k,l},\tilde{\ten{T}}^d + \ten{T}^d_{k,l}):\ten{T}^d_{k,l} \dxdt+\int_{t_2}^{t_2+\mu}\int_{\Omega}\ten{G}(\tilde{\theta} + \theta_{k,l},\tilde{\ten{T}}^d + \ten{T}^d_{k,l}):\ten{T}^d_{k,l} \dxdt\right)\\
&\le
\limsup\limits_{l\to\infty}\int_{0}^{t_2}\int_{\Omega}\ten{G}(\tilde{\theta} + \theta_{k,l},\tilde{\ten{T}}^d + \ten{T}^d_{k,l}):(\tilde{\ten{T}}^d + \ten{T}^d_{k,l}) \dxdt\\
&-\lim\limits_{l\to\infty}\int_{0}^{t_2}\int_{\Omega}\ten{G}(\tilde{\theta} + \theta_{k,l},\tilde{\ten{T}}^d + \ten{T}^d_{k,l}):\tilde{\ten{T}}^d  \dxdt\\
%&+\limsup\limits_{l\to\infty}\int_{t_2}^{t_2+\mu}\int_{\Omega}\ten{G}(\tilde{\theta} + \theta_{k,l},\tilde{\ten{T}}^d + \ten{T}^d_{k,l}):\ten{T}^d_{k,l} \dxdt\\
&\le
\limsup\limits_{l\to\infty}\int_{0}^{t_2+\mu}\int_{\Omega}\ten{G}(\tilde{\theta} + \theta_{k,l},\tilde{\ten{T}}^d + \ten{T}^d_{k,l}):(\tilde{\ten{T}}^d + \ten{T}^d_{k,l})\psi_{\mu,t_2} \dxdt\\
&-\lim\limits_{l\to\infty}\int_{0}^{t_2}\int_{\Omega}\ten{G}(\tilde{\theta} + \theta_{k,l},\tilde{\ten{T}}^d + \ten{T}^d_{k,l}):\tilde{\ten{T}}^d  \dxdt\\
%&+\limsup\limits_{l\to\infty}\int_{t_2}^{t_2+\mu}\int_{\Omega}\ten{G}(\tilde{\theta} + \theta_{k,l},\tilde{\ten{T}}^d + \ten{T}^d_{k,l}):\ten{T}^d_{k,l} \dxdt\\
&\le
\limsup_{l\rightarrow\infty}\int_{0}^{t_2+\mu}\int_{\Omega}\ten{G}(\tilde{\theta} + \theta_{k,l},\tilde{\ten{T}}^d + \ten{T}^d_{k,l}):\ten{T}^d_{k,l} \ \psi_{\mu,t_2}\dxdt\\
&+\lim\limits_{l\to\infty}\int_{0}^{t_2+\mu}\int_{\Omega}\ten{G}(\tilde{\theta} + \theta_{k,l},\tilde{\ten{T}}^d + \ten{T}^d_{k,l}):\tilde{\ten{T}}^d \ \psi_{\mu,t_2}\dxdt\\
&-\lim\limits_{l\to\infty}\int_{0}^{t_2}\int_{\Omega}\ten{G}(\tilde{\theta} + \theta_{k,l},\tilde{\ten{T}}^d + \ten{T}^d_{k,l}):\tilde{\ten{T}}^d  \dxdt\\
& \leq
\int_{0}^{t_2+\mu}\int_{\Omega}\ten{\chi}_k:\ten{T}^d_k \ \psi_{\mu,t_2}\dxdt\\
&+\lim\limits_{l\to\infty}
\int_{t_2}^{t_2+\mu}\int_{\Omega}\ten{G}(\tilde{\theta} + \theta_{k,l},\tilde{\ten{T}}^d + \ten{T}^d_{k,l}):\tilde{\ten{T}}^d  \psi_{\mu,t_2}\dxdt\\
&=\int_{0}^{t_2+\mu}\int_{\Omega}\ten{\chi}_k:\ten{T}^d_k \ \psi_{\mu,t_2}\dxdt +\lim\limits_{l\to\infty}\int_{t_2}^{t_2+\mu}\int_{\Omega}
\ten{G}(\tilde{\theta} + \theta_{k,l},\tilde{\ten{T}}^d + \ten{T}^d_{k,l}):\tilde{\ten{T}}^d_k\dxdt
\end{split}\end{equation}
Passing with $\mu\to0$ yields \eqref{teza-8}. The proof is complete.
%And the limit of the last term $\int_{0}^{t}\int_{\Omega}G(\theta_{k,l},\ten{T}^d_{k,l}):\ten{T}^d_{k,l}$:
%\begin{equation}
%\int_{0}^{t}\int_{\Omega}G(\theta_{k,l},\ten{T}^d_{k,l}):\ten{T}^d_{k,l}
%\nonumber
%\end{equation}
%Combining  \eqref{eq:lower_semi_1} and \eqref{granica_l} we complete the proof. 
%obtain
%\begin{equation}
%\limsup_{l\rightarrow\infty}\int_{0}^{t_2}\int_{\Omega}\ten{G}(\tilde{\theta} + \theta_{k,l},\tilde{\ten{T}}^d + \ten{T}^d_{k,l}):\ten{T}^d_{k,l} \dxdt \leq
%\int_{0}^{t_2}\int_{\Omega}\ten{\chi}_k:\ten{T}^d_k \dxdt ,
%\nonumber
%\end{equation}
%which completes the proof.
\end{proof}

%\subsection*{Minty-Browder trick}
%Using the next Theorem we improve some of the convergences and identify the weak limits ~$\ten{\chi}_k$. 
To identify the weak limit~$\ten{\chi}_k$ we use the Minty-Browder trick. From the monotonicity of the function $\ten{G}(\cdot,\cdot)$ we obtain
\begin{equation}
\begin{split}
\int_{\Omega}\left(\ten{G}(\tilde{\theta} + \theta_{k,l},\tilde{\ten{T}}^d + \ten{T}^d_{k,l})  - \ten{G}(\tilde{\theta} + \theta_{k,l},\tilde{\ten{T}}^d + \ten{W}^d)\right) : &(\ten{T}_{k,l}^d - \ten{W}^d) \dx\geq 0 
\\
&
\forall \ \ten{W}^d\in L^p(0,T,L^p(\Omega,\mathcal{S}^3)).
\label{eq:zal_G1}
\end{split}
\end{equation}
Hence
\begin{equation}
\begin{split}
\int_0^T\int_{\Omega} \ten{G}(\tilde{\theta} + \theta_{k,l},\tilde{\ten{T}}^d + \ten{T}^d_{k,l}):\ten{T}_{k,l}^d \dxdt
- &\int_0^T\int_{\Omega} \ten{G}(\tilde{\theta} + \theta_{k,l},\tilde{\ten{T}}^d + \ten{T}^d_{k,l}):\ten{W}^d \dxdt
\\
- \int_0^T\int_{\Omega} \ten{G}(\tilde{\theta} + \theta_{k,l},\tilde{\ten{T}}^d + \ten{W}^d) :\ten{T}_{k,l}^d \dxdt
+ &\int_0^T\int_{\Omega} \ten{G}(\tilde{\theta} + \theta_{k,l},\tilde{\ten{T}}^d + \ten{W}^d) : \ten{W}^d \dxdt\geq 0 .
\end{split}
\label{eq:44}
\end{equation}
The pointwise convergence of $\{\theta_{k,l}\}$ implies the pointwise convergence of $\{\ten{G}(\tilde{\theta} + \theta_{k,l},\tilde{\ten{T}}^d + \ten{W}^d)\}$. The function $|\tilde{\ten{T}}^d + \ten{W}^d|^{p-1}$ belongs to $L^{p'}(0,T,L^{p'}(\Omega))$, hence the sequence $\{\ten{G}(\tilde{\theta} + \theta_{k,l},\tilde{\ten{T}}^d +\ten{W}^d)\}$ is uniformly bounded in $L^{p'}(0,T,L^{p'}(\Omega,\mathcal{S}^3))$. Then, using the Lebesgue dominated convergence theorem
%Vitali lemma \cite[Lemma 2.11]{maleknecas}
% stosujemy lemat Vitaliego do |\cdot|^p i mamy zbiezność tego członu a zatem zbieżność normy. Ze słabej zbieżność i zbieżności normy otrzymujemy zbiezność całości. 
%({\color{red}Tylko czy nie lepeij by tu było skożystać z tw Lebesguea o zbieżnosci zmajoryzowanej? Zbieżność punktowa + wpólne ograniczenie. nie musimy się martwić jednostajną całkowalnościa, chyba że to na jedno tu wychodzi.})
we obtain that $\ten{G}(\tilde{\theta} + \theta_{k,l},\tilde{\ten{T}}^d + \ten{W}^d)\rightarrow \ten{G}(\tilde{\theta} + \theta_k,\tilde{\ten{T}}^d +\ten{W}^d)$ in $L^{p'}(0,T, L^{p'}(\Omega,\mathcal{S}^3))$ for every $\ten{W}^d\in L^p(0,T,L^p(\Omega,\mathcal{S}^3))$. 
Letting  $l\to\infty$ in \eqref{eq:44}, we get
%\begin{equation}
%\begin{split}
%\limsup_{l\rightarrow \infty}\int_0^T\left(\ten{G}(\theta_{k,l},\ten{T}^d_{k,l}),\ten{T}_{k,l}\right)\dt
%& \leq 
%\int_0^T\left( \ten{\chi}_k,\ten{T}_k \right)\dt ,
%\\
%\lim_{l\rightarrow \infty} \int_0^T\left(\ten{G}(\theta_{k,l},\ten{T}^d_{k,l}),\ten{W} \right)\dt
%& =
%\int_0^T\left( \ten{\chi}_k,\ten{W} \right)\dt .
%\label{eq:zb_pier}
%\end{split}
%\end{equation}
%Then 
%\begin{equation}
%\begin{split}
%\lim_{l\rightarrow \infty} \int_0^T \left(\ten{G}(\theta_{k,l},\ten{W}^d) ,\ten{T}_{k,l}  \right)\dt
%& =
%\int_0^T \left(\ten{G}(\theta_{k},\ten{W}^d) ,\ten{T}_{k}  \right)\dt ,
%\\
%\lim_{l\rightarrow \infty} \int_0^T \left(\ten{G}(\theta_{k,l},\ten{W}^d) , \ten{W} \right)\dt
%& =
%\int_0^T \left(\ten{G}(\theta_{k},\ten{W}^d) ,\ten{W}  \right)\dt .
%\end{split}
%\label{eq:zb_druga}
%\end{equation}
%Combining \eqref{eq:zal_G1}, \eqref{eq:zb_pier} and \eqref{eq:zb_druga} we obtain
\begin{equation}
\int_0^T \int_{\Omega} \left(\ten{\chi}_k -\ten{G}(\tilde{\theta} + \theta_{k},\tilde{\ten{T}}^d + \ten{W}^d)\right):( \ten{T}_k^d - \ten{W}^d) \dxdt \geq 0
\qquad \forall \ \ten{W}^d\in L^p(0,T,L^p(\Omega,\mathcal{S}^3)),
\end{equation}
and taking $\ten{W}^d = \ten{T}^d_k - \lambda \ten{U}^d$, where $\ten{U}^d\in L^p(0,T,L^p(\Omega,\mathcal{S}^3))$ and $\lambda>0$, then
\begin{equation}
\begin{split}
\int_0^T \int_{\Omega}\left( \ten{\chi}_k -\ten{G}(\tilde{\theta} + \theta_{k},\tilde{\ten{T}}^d + \ten{T}_k^d - \lambda \ten{U}^d)\right):(  \lambda \ten{U}^d) \dxdt \geq 0 \quad \forall \ \ten{U}^d\in L^p(0,T,L^p(\Omega,\mathcal{S}^3))
\end{split}
\end{equation}
hence
\begin{equation}
\begin{split}
\int_0^T \int_{\Omega} \left(\ten{\chi}_k -\ten{G}(\tilde{\theta} + \theta_{k},\tilde{\ten{T}}^d + \ten{T}_k^d - \lambda \ten{U}^d)\right): \ten{U}^d \dxdt \geq 0 \quad \forall \ \ten{U}^d\in L^p(0,T,L^p(\Omega,\mathcal{S}^3)).
\end{split}
\end{equation}
Letting  $\lambda\to0$ we obtain 
%{\color{red}   dlaczego to możemy? z tego samo powodu co poprzednio - mamy ograniczoność ciągu i zbieżność punktową. }
\begin{equation}
\int_0^T \int_{\Omega} \left(\ten{\chi}_k -\ten{G}(\tilde{\theta} + \theta_{k},\tilde{\ten{T}}^d + \ten{T}_k^d )\right): \ten{U}^d \dxdt \geq 0 \qquad \forall\  \ten{U}^d\in L^p(0,T,L^p(\Omega,\mathcal{S}^3)).
\end{equation}
Choosing now $\lambda<0$  we obtain the opposite inequality and hence
\begin{equation}
\int_0^T \int_{\Omega} \left(\ten{\chi}_k -\ten{G}(\tilde{\theta} + \theta_{k},\tilde{\ten{T}}^d + \ten{T}_k^d )\right): \ten{U}^d \dxdt = 0 \qquad \forall\ \ten{U}^d\in L^p(0,T,L^p(\Omega,\mathcal{S}^3)).
\end{equation}
Thus
\begin{equation}
\ten{\chi}_k =\ten{G}(\tilde{\theta} + \theta_{k},\tilde{\ten{T}}^d + \ten{T}_k^d )\quad \mbox{a.e. in}\ (0,T)\times\Omega.
\label{82}\end{equation}
Consequently for every $k\in\mathbb{N}$ 
$$\ten{G}(\tilde{\theta} + \theta_{k,l},\tilde{\ten{T}}^d + \ten{T}_{k,l}^d)\rightharpoonup \ten{G}(\tilde{\theta} + \theta_k,\tilde{\ten{T}}^d + \ten{T}_k^d)\quad \mbox{in} \ L^{p'}(0,T,L^{p'}(\Omega,\mathcal{S}^3))$$ 
as $l\rightarrow \infty$. 
%with $l\rightarrow \infty$. 

\begin{lemat}
%The sequence 
For each $k\in\mathbb{N}$ it holds
\begin{equation}\begin{split}
\lim\limits_{l\to\infty}\int_0^T\int_\Omega&\ten{G}(\tilde{\theta} + \theta_{k,l},\tilde{\ten{T}}^d + \ten{T}_{k,l}^d):(\tilde{\ten{T}}^d + \ten{T}_{k,l}^d )\dxdt\\&
= \int_0^T\int_\Omega\ten{G}(\tilde{\theta} + \theta_k,\tilde{\ten{T}}^d + \ten{T}_k^d):(\tilde{\ten{T}}^d + \ten{T}_k^d)\dxdt.
\end{split}\end{equation}
%in $L^1(0,T,L^1(\Omega))$.
\end{lemat}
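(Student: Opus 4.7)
The plan is to split the integrand as
\begin{equation*}
\ten{G}(\tilde\theta+\theta_{k,l},\tilde{\ten{T}}^d+\ten{T}^d_{k,l}):(\tilde{\ten{T}}^d+\ten{T}^d_{k,l}) = \ten{G}(\tilde\theta+\theta_{k,l},\tilde{\ten{T}}^d+\ten{T}^d_{k,l}):\tilde{\ten{T}}^d + \ten{G}(\tilde\theta+\theta_{k,l},\tilde{\ten{T}}^d+\ten{T}^d_{k,l}):\ten{T}^d_{k,l}
\end{equation*}
and handle the two pieces separately. The first piece converges to $\int_0^T\int_\Omega \ten{G}(\tilde\theta+\theta_k,\tilde{\ten{T}}^d+\ten{T}^d_k):\tilde{\ten{T}}^d \dxdt$ directly by weak--strong duality, because the identification \eqref{82} gives the weak convergence of $\ten{G}(\tilde\theta+\theta_{k,l},\tilde{\ten{T}}^d+\ten{T}^d_{k,l})$ to $\ten{G}(\tilde\theta+\theta_k,\tilde{\ten{T}}^d+\ten{T}^d_k)$ in $L^{p'}(0,T,L^{p'}(\Omega,\mathcal{S}^3))$, while the fixed $\tilde{\ten{T}}^d$ lies in $L^p(0,T,L^p(\Omega,\mathcal{S}^3))$.

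For the second piece I intend to squeeze $\limsup$ from above against $\liminf$ from below. The upper bound
\begin{equation*}
\limsup_{l\to\infty}\int_0^T\int_\Omega \ten{G}(\tilde\theta+\theta_{k,l},\tilde{\ten{T}}^d+\ten{T}^d_{k,l}):\ten{T}^d_{k,l}\dxdt \leq \int_0^T\int_\Omega \ten{G}(\tilde\theta+\theta_k,\tilde{\ten{T}}^d+\ten{T}^d_k):\ten{T}^d_k\dxdt
\end{equation*}
is exactly Lemma \ref{lm:8} combined with the identification \eqref{82} (the endpoint $t=T$ is reached by sending $t_2\to T$ in the bound already established for $t_2\le T-\mu$). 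For the matching lower bound I invoke the monotonicity from Assumption 1a in the second variable with test field $\ten{W}^d=\ten{T}^d_k$, which after rearrangement gives
\begin{align*}
\int_0^T\int_\Omega \ten{G}(\tilde\theta+\theta_{k,l},\tilde{\ten{T}}^d+\ten{T}^d_{k,l}):\ten{T}^d_{k,l}\dxdt &\geq \int_0^T\int_\Omega \ten{G}(\tilde\theta+\theta_{k,l},\tilde{\ten{T}}^d+\ten{T}^d_{k,l}):\ten{T}^d_k\dxdt\\
&\quad + \int_0^T\int_\Omega \ten{G}(\tilde\theta+\theta_{k,l},\tilde{\ten{T}}^d+\ten{T}^d_k):(\ten{T}^d_{k,l}-\ten{T}^d_k)\dxdt.
\end{align*}
Passing $l\to\infty$, the first integral on the right converges by weak--strong duality against the fixed $\ten{T}^d_k\in L^p$, while the second vanishes because $\ten{G}(\tilde\theta+\theta_{k,l},\tilde{\ten{T}}^d+\ten{T}^d_k)\to \ten{G}(\tilde\theta+\theta_k,\tilde{\ten{T}}^d+\ten{T}^d_k)$ strongly in $L^{p'}$ (a.e.\ convergence of $\theta_{k,l}$ together with the growth bound from Assumption 1b and the Lebesgue dominated convergence theorem), while $\ten{T}^d_{k,l}-\ten{T}^d_k\rightharpoonup 0$ weakly in $L^p$.

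Combining the two one-sided bounds yields the required equality for the second piece; adding back the first piece completes the proof. The delicate point is the strong $L^{p'}$-convergence of $\ten{G}$ evaluated at the ``frozen'' second argument $\tilde{\ten{T}}^d+\ten{T}^d_k$, which is precisely what makes the Minty-type monotonicity trick close, and this step relies essentially on the polynomial growth from Assumption 1b combined with the pointwise a.e.\ convergence of $\theta_{k,l}$.
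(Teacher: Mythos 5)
Your proof is correct and uses the same essential ingredients as the paper's: the monotonicity inequality tested against $\ten{T}^d_k$, the $\limsup$ bound from Lemma~\ref{lm:8} combined with the identification \eqref{82}, weak--strong duality for the terms involving $\ten{T}^d_k$ and $\tilde{\ten{T}}^d$, and the strong $L^{p'}$-convergence of $\ten{G}$ evaluated at the frozen argument $\tilde{\ten{T}}^d+\ten{T}^d_k$. The only difference is cosmetic: you split the integrand and squeeze $\limsup$ against $\liminf$, whereas the paper shows directly that the monotonicity quantity $\int_0^T\int_\Omega\bigl(\ten{G}(\ldots,\ten{T}^d_{k,l})-\ten{G}(\ldots,\ten{T}^d_k)\bigr):(\ten{T}^d_{k,l}-\ten{T}^d_k)\,\dxdt$ tends to zero, which is an equivalent rearrangement of the same inequalities.
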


\begin{proof}
Using monotonicity of the function $\ten{G}(\cdot,\cdot)$
\begin{equation}
\begin{split}
0 &\leq 
\int_0^T\int_{\Omega}
\left(
\ten{G}(\tilde{\theta} + \theta_{k,l},\tilde{\ten{T}}^d + \ten{T}_{k,l}^d) - \ten{G}(\tilde{\theta} + \theta_{k,l},\tilde{\ten{T}}^d + \ten{T}_{k}^d)\right):(\ten{T}_{k,l}^d - \ten{T}_k^d) \dxdt
\\
&=\int_0^T\int_{\Omega}
\ten{G}(\tilde{\theta} + \theta_{k,l},\tilde{\ten{T}}^d + \ten{T}_{k,l}^d):(\ten{T}_{k,l}^d - \ten{T}_k^d) - \ten{G}(\tilde{\theta} + \theta_{k,l},\tilde{\ten{T}}^d + \ten{T}_{k}^d):(\ten{T}_{k,l}^d - \ten{T}_k^d) \dxdt .
\end{split}
\label{eq:50}
\end{equation}
Passing with $l$ to $\infty$ we get that the second term from \eqref{eq:50} converges to zero. Furthermore, using Lemma \ref{lm:8}
\begin{equation}
\begin{split}
0 &\leq 
\limsup_{l\rightarrow \infty}
\int_0^T\int_{\Omega}
\ten{G}(\tilde{\theta} + \theta_{k,l},\tilde{\ten{T}}^d + \ten{T}_{k,l}^d):(\ten{T}_{k,l}^d + \tilde{\ten{T}}^d - \tilde{\ten{T}}^d -\ten{T}_{k}^d) \dxdt
\\
&= 
\limsup_{l\rightarrow \infty}
\int_0^T\int_{\Omega}
\ten{G}(\tilde{\theta} + \theta_{k,l},\tilde{\ten{T}}^d + \ten{T}_{k,l}^d):(\tilde{\ten{T}}^d + \ten{T}_{k,l}^d) \dxdt
\\
& \quad
- \lim_{l\rightarrow \infty}
\int_0^T\int_{\Omega}
\ten{G}(\tilde{\theta} + \theta_{k,l},\tilde{\ten{T}}^d + \ten{T}_{k,l}^d):(\tilde{\ten{T}}^d + \ten{T}_{k}^d) \dxdt 
\le 0 .
%\\
%&
%\leq \int_0^T\int_{\Omega}
% \ten{G}(\tilde{\theta} + \theta_{k},\tilde{\ten{T}}^d + \ten{T}_{k}^d):(\tilde{\ten{T}}^d + \ten{T}_{k}^d)  \dxdt .
\end{split}
\end{equation}
Hence %using \eqref{jedna_nierownosc} we obtain
\begin{equation}
0 = \lim_{l\rightarrow\infty} 
\int_0^T\int_{\Omega}
\left(
\ten{G}(\tilde{\theta} + \theta_{k,l},\tilde{\ten{T}}^d + \ten{T}_{k,l}^d) - \ten{G}(\tilde{\theta} + \theta_{k,l},\tilde{\ten{T}}^d + \ten{T}_{k}^d)\right):(\ten{T}_{k,l}^d - \ten{T}_k^d) \dxdt ,
\end{equation}
and
\begin{equation}
\begin{split}
\lim_{l\rightarrow \infty}
\int_0^T\int_{\Omega}
\ten{G}(\tilde{\theta} + \theta_{k,l},\tilde{\ten{T}}^d + \ten{T}_{k,l}^d):&(\tilde{\ten{T}}^d + \ten{T}_{k,l}^d) \dxdt
\\&= %\lim_{l\rightarrow \infty}
\int_0^T\int_{\Omega}
\ten{G}(\tilde{\theta} + \theta_{k},\tilde{\ten{T}}^d + \ten{T}_{k}^d):(\tilde{\ten{T}}^d + \ten{T}_{k}^d) \dxdt ,
\end{split}
\nonumber
\end{equation}
which completes the proof.
\end{proof}

Hence now we can also pass to the limit in the heat equation, namely we obtain for all 
$\phi\in C^\infty([0,T]\times\Omega)$
\begin{equation}
\begin{split}
-\int_0^T\int_{\Omega} \theta_k\phi_t \dxdt -
\int_{\Omega} \theta_k(x,0)\phi(x,0) \dx    + 
\int_0^T\int_{\Omega}  \nabla\theta_k\cdot\nabla\phi  \dxdt  \\= 
\int_0^T\int_{\Omega} \mathcal{T}_k\left((\ten{T}^d_{k}+\tilde{\ten{T}}^d):\ten{G}(\theta_{k}+\tilde{\theta},\ten{T}^d_{k}+\tilde{\ten{T}}^d)\right)\phi \dxdt,
\\
%-\int_0^T \int_{\Omega} S(\theta) \phi_t \dxdt -
%\int_{\Omega} S(\theta_0(x))\phi(0,x) \dx &
%+ \int_0^T \int_{\Omega} S'(\theta)\nabla \theta\cdot\nabla \phi \dxdt 
%\\
%{\color{red} - \int_0^T \int_{\partial\Omega} S'(\theta) g_{\theta}\phi \dxdt}
%+ \int_0^T \int_{\Omega} S''(\theta) |\nabla\theta|^2 \phi \dxdt
%& =
%\int_0^T\int_{\Omega} \ten{T}^d:\ten{G}(\theta,\ten{T}^d) S'(\theta) \phi \dxdt,
\label{eq:after_limit_l_2}
\end{split}
\end{equation}

\subsection{Limit passage $k\to\infty$ } %Passing to the limit with Galerkin approximation in the displacement}
\label{sec:8}

%In this section we pass to the limit with $k$. 
%Firstly, we show the existence of the renormalized solution of the heat equation. We use the truncation operator to the right hand side of the heat equation. Reason of doing that is the fact that the right side of equation $\ten{T}_k\cdot \ten{G}(\theta_k,\ten{T}_k)$ belongs only to the space $L^1(0,T,L^1(\Omega))$. 
%The sequence $\{\ten{T}_k\}$ is uniformly bounded in $L^2(0,T,L^2(\Omega,\mathcal{S}^3))$, hence the sequence $\{\ten{G}_k(\theta_k,\ten{T}_k^d)\}$ is uniformly bounded in $L^2(0,T,L^2(\Omega,\mathcal{S}^3))$. This implies that the sequence $\{\ten{T}_k:\ten{G}_k(\theta_k,\ten{T}_k^d)\}$ is uniformly bounded in $L^1(0,T,L^1(\Omega))$. 

%We start passing to the limit in the second level of approximation from usage of a results of Boccardo and Galllou\"{e}t \cite{Boccardo} for parabolic equation with integrable data. In Appendix we present their result, but for the Neumann boundary condition, which could be found also \cite{1240/THESES}. 

We start this section with  considerations on the sequence of temperatures. 
We are using the 
 result of Boccardo and Galllou\"{e}t \cite{Boccardo} for parabolic equation with only integrable data and Dirichlet boundary conditions. 
 Since our studies concern the problem with Neumann boundary conditions, we include the modification of their result in the  Appendix A. 
 % or \cite{1240/THESES}, 
 Consequently, we conclude for each $1<q<\frac{5}{4}$
 \begin{equation}
 \theta_k \rightharpoonup \theta  \mbox{ weakly in } L^q(0,T,W^{1,q}(\Omega)). \\
 \end{equation}
 % $\theta_k \rightharpoonup \theta$ in $L^q(0,T,W^{1,q}(\Omega))$ for each $1<q<\frac{5}{4}$. %This implies the convergence $\theta_k \rightarrow \theta$ a.e..
%We get the same result when we have only the uniform boundedness of the right hand side of the heat equations, hence, $\theta_k \rightharpoonup \theta$ in $L^q(0,T,W^{1,q}(\Omega))$ for each $1<q<\frac{5}{4}$ and this implies the convergence $\theta_k \rightarrow \theta$ a.e..
%Additionally from the last section, sequence $\{\mathcal{T}_k(\ten{G}(\tilde{\theta} + \theta_{k,l},\tilde{\ten{T}}^d + \ten{T}_{k,l}^d):(\tilde{\ten{T}}^d + \ten{T}_{k,l}^d)\}$ convergence to $\ten{G}(\tilde{\theta} + \theta_k,\tilde{\ten{T}}^d + \ten{T}_k^d):(\tilde{\ten{T}}^d + \ten{T}_k^d) $ in $L^1(0,T,L^1(\Omega))$.
Moreover, the uniform estimates  from  the previous sections allow to conclude that at least for a subsequence the  following holds
\begin{equation}
\begin{array}{cl}
\theta_k \rightarrow \theta & \mbox{ a.e. in }\Omega\times(0,T),\\
\vc{u}_k \rightharpoonup  \vc{u} & \mbox{ weakly in } L^{p'}(0,T,W^{1,p'}_0(\Omega,\mathbb{R}^3)),\\
\ten{T}_k \rightharpoonup \ten{T} & \mbox{ weakly in }  L^2(0,T,L^2(\Omega,\mathcal{S}^3)),\\
\ten{T}^d_k \rightharpoonup \ten{T}^d & \mbox{ weakly in }  L^p(0,T,L^p(\Omega,\mathcal{S}^3_d)),\\
%\theta_k\rightharpoonup \theta & L^2(0,T,W^{1,2}_0(\Omega)),\\
\ten{G}(\tilde{\theta} + \theta_k,\tilde{\ten{T}}^d +\ten{T}_k^d) \rightharpoonup \ten{\chi} & \mbox{ weakly in }  L^{p'}(0,T,L^{p'}(\Omega,\mathcal{S}^3_d)), \\
(\ten{\varepsilon}^{\bf p}_k)_t \rightharpoonup (\ten{\varepsilon}^{\bf p})_t & \mbox{ weakly in }  L^{p'}(0,T,L^{p'}(\Omega,\mathcal{S}^3_d)).
\end{array}
\end{equation}
%Additionally, $\{\vc{f}_k\}$ converges to $\vc{f}$ in $L^2()$from the construction of the sequence $\{\vc{f}_k\}$.
%To pass to the limit with $k$ we follow the same path as in the previous section. 
Consequently, passing to the limit in \eqref{limit1}, \eqref{65} we obtain
\begin{equation}\label{limit1a}
\begin{split}
\int_0^T\int_{\Omega}\ten{T}:\nabla\vc{\varphi} \dxdt =0
%&= 0, \quad n=1,\dots, k %\int_0^T\int_{\Omega}\vc{f}\cdot \vc{\varphi} \dxdt ,
%\\
%\ten{T} = \ten{D}(\ten{\varepsilon} -\ten{\varepsilon}^{\bf p} ),
%\\
%- \int_0^T\int_{\Omega} \ten{\varepsilon}^{\bf p}:\ten{\Phi}_t  \dxdt -
%\int_{\Omega} \ten{\varepsilon}^{\bf p}_0(x):\ten{\Phi}(x,0) \dx &=
%\int_0^T\int_{\Omega} \ten{G}(\theta,\ten{T}^d):\ten{\Phi} \dxdt ,
\end{split}
\end{equation}
\begin{equation}\label{65a}
\int_0^T\int_{\Omega}(\ten{\varepsilon}^{\bf p})_t : \ten{\psi}\dxdt = 
\int_0^T\int_{\Omega}\ten{\chi}  :  \ten{\psi}\dxdt
 \end{equation}
 for all $ \ten{\varphi}\in C^\infty([0,T],L^2(\Omega,\mathcal{S}^3))$ and then also for all 
$ \ten{\varphi}\in L^2(0,T;L^2(\Omega,\mathcal{S}^3))$ and for all $\ten{\psi}\in L^p(0,T;L^p(\Omega,\mathcal{S}^3))$
To characterize the limit $\ten{\chi}$ and pass to the limit in the heat equation we follow the similar lines as in the limit passage with $l\to\infty$. 
%As in the previous section the following lemma is true.

\begin{lemat}
The following inequality holds for the solution of approximate systems.
\begin{equation}
\limsup_{k\rightarrow\infty}\int_{0}^{t_2}\int_{\Omega}\ten{G}(\tilde{\theta} + \theta_{k},\tilde{\ten{T}}^d + \ten{T}^d_{k}):\ten{T}^d_k \dxdt \leq
\int_{0}^{t_2}\int_{\Omega}\ten{\chi}:\ten{T}^d \dxdt .
\label{jedna_nierownosc_1}
\end{equation}
\end{lemat}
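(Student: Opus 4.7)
The plan is to mirror the proof of Lemma \ref{lm:8} one level higher in the Galerkin hierarchy. The starting point is the $k$-level energy identity \eqref{gran}, which, combined with the identification \eqref{82} $\ten{\chi}_k = \ten{G}(\tilde{\theta}+\theta_k,\tilde{\ten{T}}^d+\ten{T}_k^d)$, can be rewritten as
\begin{equation*}
\int_0^{t_2}\int_\Omega \ten{G}(\tilde{\theta}+\theta_k,\tilde{\ten{T}}^d+\ten{T}_k^d):\ten{T}_k^d \dxdt = \mathcal{E}(\ten{\varepsilon}(\vc{u}_k(0)),\ten{\varepsilon}^{\bf p}_k(0)) - \mathcal{E}(\ten{\varepsilon}(\vc{u}_k(t_2)),\ten{\varepsilon}^{\bf p}_k(t_2)).
\end{equation*}
Since weak convergence $\ten{T}_k \rightharpoonup \ten{T}$ in $L^2(0,T,L^2(\Omega,\mathcal{S}^3))$ does not yield pointwise-in-time information at $t_2$, I would first apply the cut-off $\psi_{\mu,t_2}$ from \eqref{psi-mu} to obtain a time-averaged version of the identity, as in \eqref{mu2}-\eqref{mu3}, and only then pass to the limit in $k$.

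Next I would take $\liminf_{k\to\infty}$. Since $\mathcal{E}(\ten{\varepsilon}(\vc{u}_k),\ten{\varepsilon}^{\bf p}_k) = \frac{1}{2}\int_\Omega \ten{D}^{-1}\ten{T}_k : \ten{T}_k \dx$ is a nonnegative quadratic form in $\ten{T}_k$, weak lower semicontinuity in $L^2(0,T,L^2(\Omega,\mathcal{S}^3))$ gives
\begin{equation*}
\liminf_{k\to\infty}\frac{1}{\mu}\int_{t_2}^{t_2+\mu}\mathcal{E}(\ten{\varepsilon}(\vc{u}_k(t)),\ten{\varepsilon}^{\bf p}_k(t))\dt \geq \frac{1}{\mu}\int_{t_2}^{t_2+\mu}\mathcal{E}(\ten{\varepsilon}(\vc{u}(t)),\ten{\varepsilon}^{\bf p}(t))\dt,
\end{equation*}
while the initial energies converge thanks to the strong convergence of the Galerkin initial data $\ten{\varepsilon}^{\bf p}_k(0)\to\ten{\varepsilon}^{\bf p}_0$ in $L^2(\Omega,\mathcal{S}^3)$ together with continuous dependence of $\vc{u}_k(0)$ on $\ten{\varepsilon}^{\bf p}_k(0)$ through the elliptic equilibrium equation.

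The third step is to derive the limit-level energy identity for the triple $(\vc{u},\ten{\varepsilon}^{\bf p},\ten{T})$, namely
\begin{equation*}
\mathcal{E}(\ten{\varepsilon}(\vc{u}(t_2)),\ten{\varepsilon}^{\bf p}(t_2)) - \mathcal{E}(\ten{\varepsilon}(\vc{u}(0)),\ten{\varepsilon}^{\bf p}(0)) = -\int_0^{t_2}\int_\Omega \ten{\chi}:\ten{T}^d \dxdt.
\end{equation*}
This is obtained by mollifying the limit equations \eqref{limit1a} and \eqref{65a} in time, testing with time-mollified versions of $\vc{u}$ and $\ten{T}^d$ respectively (restricted by $\mathbf{1}_{(t_1,t_2)}$), subtracting and then passing $\epsilon\to 0$ and $t_1\to 0$, in the same spirit as \eqref{app_system_n}-\eqref{granica_l}. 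Averaging this identity over $(t_2,t_2+\mu)$, combining with the weak lower semicontinuity bound obtained in the previous step, and passing $\mu\to 0$ using the $C_w([0,T],L^2)$ time-regularity of $\ten{\varepsilon}(\vc{u})-\ten{\varepsilon}^{\bf p}$ inherited from $(\ten{\varepsilon}^{\bf p})_t\in L^{p'}(0,T,L^{p'}(\Omega,\mathcal{S}^3))$ and Lemma \ref{wsp_org_u}, produces the claimed inequality.

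The main technical obstacle is the admissibility of the mollified test functions in the derivation of the limit-level energy identity: at the infinite-dimensional level we no longer have access to time derivatives of Galerkin coefficients, so one must work with time-mollified versions of $\vc{u}$ and $\ten{T}^d$ and verify that they lie in the spaces against which \eqref{limit1a} and \eqref{65a} are dual. For \eqref{65a} this requires $\ten{T}^d\in L^p(0,T,L^p(\Omega,\mathcal{S}^3_d))$, available by Lemma~\ref{pom_2}; for \eqref{limit1a} one exploits the divergence-free character of $\ten{T}$ (as a distribution in $x$ for a.e.~$t$) together with the $L^{p'}(0,T,W^{1,p'}_0(\Omega))$-regularity of $\vc{u}$ from Lemma \ref{wsp_org_u}, and repeatedly uses the identity $\ten{A}:\ten{B}^d=\ten{A}^d:\ten{B}$ valid whenever $\ten{A}$ is traceless.
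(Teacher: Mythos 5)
Your plan follows the paper's overall architecture -- the cut-off function $\psi_{\mu,t_2}$, the time-averaged version of the energy identity, the weak lower semicontinuity argument at the left endpoint, and the final passage $\mu\to 0$ -- and those parts are essentially identical to the paper's proof. The gap is precisely in the third step, the derivation of the limit-level energy identity, and it is one the paper explicitly flags and avoids.

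You propose to test the limit momentum balance \eqref{limit1a} against a time-mollified version of the limit displacement $\vc{u}$. But \eqref{limit1a} is a pairing of $\ten{T}\in L^\infty(0,T,L^2(\Omega,\mathcal{S}^3))$ against $\nabla\vc{\varphi}$, and you only know $\vc{u}\in L^{p'}(0,T,W^{1,p'}_0(\Omega,\mathbb{R}^3))$ with $p'\le 2$ (Lemma \ref{wsp_org_u}). Time mollification does not improve spatial integrability, so for $p>2$ the quantity $\int\ten{T}:(\ten{\varepsilon}(\vc{u})*\eta_\epsilon)_t$ is not a priori finite; the full tensor $\ten{T}$ (unlike $\ten{T}^d$) is not controlled in $L^p$. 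The divergence-free structure of $\ten{T}$ extends the admissible test class from $C^\infty_c$ to $W^{1,q}_0$ only when $\ten{T}\in L^{q'}$, which here would require $\ten{T}\in L^p(L^p)$ -- exactly what is unavailable. The trace-free identity $\ten{A}:\ten{B}^d=\ten{A}^d:\ten{B}$ does not rescue this either, because neither $\ten{T}$ nor $\ten{\varepsilon}(\vc{u}_t)$ is traceless, so nothing reduces the full pairing to a pairing of $\ten{T}^d\in L^p$ against something in $L^{p'}$.

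The paper circumvents this by \emph{not} testing \eqref{limit1a} with the limit object. Instead it takes the test function to be the time-mollified \emph{approximate} displacement $(\ten{\varepsilon}(\vc{u}_k)*\eta_\epsilon)_t\mathbf{1}_{(t_1,t_2)}*\eta_\epsilon$, which is spatially smooth (a finite linear combination of eigenfunctions of $-\div\ten{D}\ten{\varepsilon}(\cdot)$), hence belongs to every $L^q$ in space, and pairs against $\ten{T}\in L^2$. Simultaneously it uses the $k$-level flow rule \eqref{65} with the limit $\ten{T}^d\in L^p$ as test function. After subtracting, one passes $k\to\infty$ at fixed $\epsilon$ (using uniform $L^2(L^2)$-bounds on $(\ten{\varepsilon}(\vc{u}_k)-\ten{\varepsilon}^{\bf p}_k)_t*\eta_\epsilon$ for fixed $\epsilon$), and only then $\epsilon\to 0$ and $t_1\to 0$. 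This ``cross-testing'' of limit against approximant is the one genuinely new idea relative to Lemma \ref{lm:8} and the one your proposal omits; as written your argument would only be valid in the borderline case $p=2$, whereas the theorem covers all $p\ge 2$.
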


\begin{proof}
Due to \eqref{82} we can rewrite \eqref{gran} as follows
\begin{equation}\label{do-l}
\frac{d}{dt} \mathcal{E}(\ten{\varepsilon}(\vc{u}_{k}) , \ten{\varepsilon}^{\bf p}_{k}) 
%\int_{\Omega}\ten{D}\big(\ten{\varepsilon}(\vc{u}_{k,l}) - \ten{\varepsilon}^{\bf p}_{k,l}\big)&:\big((\ten{\varepsilon}(\vc{u}_{k,l}) - \ten{\varepsilon}^{\bf p}_{k,l})\big)_t \dx
%\\
 = 
-
\int_{\Omega}\ten{G}(\tilde{\theta} + \theta_{k},\tilde{\ten{T}}^d + \ten{T}^d_{k}):\ten{T}^d_{k}\dx.
\end{equation}
We multiply the above identity by $\psi_{\mu,t_2}$ given by formula \eqref{psi-mu} and integrate over $(0,T)$. 
Passing to  the limit $k\to\infty$ we proceed in the same manner as in the proof of Lemma~\ref{lm:8} and obtain
\begin{equation}\label{mu4a}
\begin{split}
\liminf\limits_{k\to\infty}\int_{0}^{T}
\frac{d}{d\tau}& \mathcal{E}(\ten{\varepsilon}(\vc{u}_{k}) , \ten{\varepsilon}^{\bf p}_{k}) \,\psi_{\mu,t_2}\dt\\
&=\liminf\limits_{k\to\infty}\frac{1}{\mu}\int_{t_2}^{t_2+\mu}\mathcal{E}(\ten{\varepsilon}(\vc{u}_{k}) , \ten{\varepsilon}^{\bf p}_{k}) \dt-
\lim\limits_{k\to\infty}\mathcal{E}(\ten{\varepsilon}(\vc{u}_{k}(0)) , \ten{\varepsilon}^{\bf p}_{k}(0))\\
&\ge \frac{1}{\mu}\int_{t_2}^{t_2+\mu}\mathcal{E}(\ten{\varepsilon}(\vc{u}_{k}(t)) , \ten{\varepsilon}^{\bf p}_{k}(t)) \dt-
\mathcal{E}(\ten{\varepsilon}(\vc{u}(0)) , \ten{\varepsilon}^{\bf p}(0)).
\end{split}\end{equation}

%Now we pass to the limit with $k$ in \eqref{granica_l} %Indeed,
%\begin{equation}
%\begin{split}
%\frac{1}{2}\liminf_{k\rightarrow\infty}\int_{\Omega}\ten{D}(\ten{\varepsilon}(\vc{u}_k) - \ten{\varepsilon}^{\bf p}_k)&:(\ten{\varepsilon}(\vc{u}_k) - \ten{\varepsilon}^{\bf p}_k) \dx \Big|_{0}^{t} 
%\\
%& =
%-
%\limsup_{k\rightarrow\infty}\int_{0}^{t}\int_{\Omega}\ten{G}(\tilde{\theta} + \theta_k,\tilde{\ten{T}}^d + \ten{T}^d_k):\ten{T}^d_k \dxdt. 
%\label{gr_sl_k}
%\end{split}
%\end{equation}
%To estimate  the first term on the left hand side, namely  $\int_{\Omega}\ten{D}(\ten{\varepsilon}(\vc{u}_k) - \ten{\varepsilon}^{\bf p}_k):(\ten{\varepsilon}(\vc{u}_k) - \ten{\varepsilon}^{\bf p}_k)(t)$ we shall use the lower semicontinuity of the norm generated by the operator $\ten{D}$. For the second one
%$-\int_{\Omega}\ten{D}(\ten{\varepsilon}(\vc{u}_k) - \ten{\varepsilon}^{\bf p}_k):(\ten{\varepsilon}(\vc{u}_k) - \ten{\varepsilon}^{\bf p}_k)(0)$, we use the initial conditions for approximative equation.
%% {\em nie wiem czy podchodzimy z doły czy lepiej będzie jak będziemy startować z każdego innego przybliżenia czy zaczynamy od takich samych warunków począ%tkowych.}
For the final step of the proof of the lemma we need to show that  the energy equality holds. Contrary to the case of previous section, we cannot use the time derivative of the limit, namely $\vc{\varepsilon}(\vc{u})_t$ as the test function. Although we shall mollifty with respect to time, but  the regularity with respect to space is not sufficient since possibly $p'< 2$. Therefore we proceed differently. We use an approximate sequence as a test function in the limit identity. Indeed, we take in \eqref{limit1a} %and\eqref{65a} 
the test function $\vc{\varphi}=(\ten{\varepsilon}(\vc{u}_k)*\eta_\epsilon)_t\mathbf{1}_{(t_1,t_2)})*\eta_{\epsilon}$, 
where again $\eta_\epsilon$ is a standard mollifier and we mollify with respect to time
\begin{equation}
\int_{t_1}^{t_2} \int_{\Omega}\ten{D}(\ten{\varepsilon}(\vc{u}) - \ten{\varepsilon}^{\bf p})*\eta_{\epsilon}: (\ten{\varepsilon}(\vc{u}_k)*\eta_{\epsilon})_t \dxdt =
0.
\label{pierwsze_r2}
\end{equation}
%Multiplying \eqref{app_system_n2}$_{(2)}$ and \eqref{app_system_n2}$_{(3)}$ by $(\iota^s_{k}*\eta_{\epsilon}\mathbf{1}_{(t_1,t_2)})*\eta_{\epsilon}$ and $(\nu^m_{k}*\eta_{\epsilon}\mathbf{1}_{(t_1,t_2)})*\eta_{\epsilon}$, respectively, we get
%\begin{equation}
%\begin{split}
%\int_{\Omega}(\ten{\varepsilon}^{\bf p}_{k})_t : \ten{D}\ten{\varepsilon}((\iota^s_{k}*\eta_{\epsilon}\mathbf{1}_{(t_1,t_2)})*\eta_{\epsilon}\vc{w}_n) \dx &
%\\
%= 
%\int_{\Omega}\ten{G}(\tilde{\theta}+\theta_k&,\tilde{\ten{T}}^d+\ten{T}^d_k) : \ten{D}\ten{\varepsilon}((\iota^s_{k}*\eta_{\epsilon}\mathbf{1}_{(t_1,t_2)})*\eta_{\epsilon}\vc{w}_n) \dx ,
%\\
%\int_{\Omega}(\ten{\varepsilon}^{\bf p}_{k})_t : \ten{D}((\nu^m_{k}*\eta_{\epsilon}\mathbf{1}_{(t_1,t_2)})*\eta_{\epsilon}\ten{\zeta}^k_m) \dx &
%\\
%= 
%\int_{\Omega}\ten{G}(\tilde{\theta}+\theta_k&,\tilde{\ten{T}}^d+\ten{T}^d_k) : \ten{D}((\nu^m_{k}*\eta_{\epsilon}\mathbf{1}_{(t_1,t_2)})*\eta_{\epsilon}\ten{\zeta}^k_m) \dx ,
%\end{split}
%\label{app_system_na}
%\end{equation}
%for a.a. $t\in(0,T)$, $s=1,...,k $ and $m=1,...,\infty $. Integrating \eqref{app_system_na}$_{(2)}$ and \eqref{app_system_n}$_{(3)}$ over time interval $(0,T)$, summing over $n=1,...,k$ and $m=1,...,\infty$ and using \eqref{eq:representation}, we obtain
Then we use the approximate equation  \eqref{65} with a test function $\ten{\psi}=(\ten{T}_k^d*\eta_{\epsilon}\mathbf{1}_{(t_1,t_2)})*\eta_{\epsilon}$. In a consequence we obtain \eqref{drugie_r2}, which together with \eqref{82} yields
\begin{equation}
\int_{t_1}^{t_2}\int_{\Omega}(\ten{\varepsilon}^{\bf p}_{k}*\eta_{\epsilon})_t:\ten{T}*\eta_{\epsilon} \dxdt =
\int_{t_1}^{t_2}\int_{\Omega}\ten{G}(\tilde{\theta}+\theta_k,\tilde{\ten{T}}^d+\ten{T}^d_k)*\eta_{\epsilon}:\ten{T}*\eta_{\epsilon} \dxdt .
\label{drugie_r2a}
\end{equation}
Products in \eqref{drugie_r2a} are well defined, since for the matrices $\ten{A}\in\mathcal{S}^3_d$ and $\ten{B}\in\mathcal{S}^3$ the equivalence $\ten{A}:\ten{B}^d=\ten{A}:\ten{B}$ holds and tensor $\ten{T}^d$ belongs to $L^{p'}(0,T,L^{p'}(\Omega,\mathcal{S}^3))$.
Subtracting \eqref{drugie_r2a} from \eqref{pierwsze_r2} we get
\begin{equation}
\int_{t_1}^{t_2}\int_{\Omega}\ten{T}*\eta_{\epsilon}:(\ten{\varepsilon}(\vc{u}_k) - \ten{\varepsilon}^{\bf p}_k)_t*\eta_{\epsilon} \dxdt=
- 
\int_{t_1}^{t_2}\int_{\Omega}\ten{G}(\tilde{\theta}+\theta_k,\tilde{\ten{T}}^d + \ten{T}^d_k)*\eta_{\epsilon}:\ten{T}^d*\eta_{\epsilon} \dxdt .
\label{granica_k_ptrzed}
\end{equation}
For every $\epsilon>0$ the sequence $\{(\ten{\varepsilon}(\vc{u}_k) - \ten{\varepsilon}^{\bf p}_k)_t*\eta_{\epsilon}\}$ belongs to $L^2(0,T,L^2(\Omega,\mathcal{S}^3))$ and is uniformly bounded in $L^2(0,T,L^2(\Omega,\mathcal{S}^3))$, hence we pass to the limit with $k\rightarrow\infty$ and we obtain

%The convolution $(\ten{\varepsilon}(\vc{u}^{\lambda}) - \ten{\varepsilon}^{\bf p})_t*\eta_{\epsilon}$ belongs to $L^{2}(0,T,L^{2}(\Omega))$. Hence passing to the limit with $\delta$ goes to $0$ we obtain

%pozbywamy się splotu po przestrzeni. Dlaczego tak możemy? Wiemy, że u nalezy do C(L^{p'}), więc możemy badać pochodną po czasie. a na varepsilon^p mamy wszystko ok.
\begin{equation}
\int_{t_1}^{t_2}\int_{\Omega}\ten{T}*\eta_{\epsilon}:(\ten{\varepsilon}(\vc{u}) - \ten{\varepsilon}^{\bf p})_t*\eta_{\epsilon} \dxdt=
- 
\int_{t_1}^{t_2}\int_{\Omega}\ten{\chi}*\eta_{\epsilon}:\ten{T}^d*\eta_{\epsilon} \dxdt .
\nonumber
\end{equation}
Using the properties of convolution we get
%Taking $\lambda\rightarrow 1$ we get
%\begin{equation}
%\int_{t_1}^{t_2}\int_{\Omega}\ten{T}*\eta_{\epsilon}:(\ten{\varepsilon}(\vc{u})*\eta_{\epsilon} - \ten{\varepsilon}^{\bf p}*\eta_{\epsilon})_t \dxdt=
%- 
%\int_{t_1}^{t_2}\int_{\Omega}\ten{\chi}*\eta_{\epsilon}:\ten{T}^d*\eta_{\epsilon} \dxdt .
%\nonumber
%\end{equation}
%Then
\begin{equation}
\int_{\Omega}\ten{T}*\eta_{\epsilon}:(\ten{\varepsilon}(\vc{u}) - \ten{\varepsilon}^{\bf p})*\eta_{\epsilon} \dx \Big|_{t_1}^{t_2}=
- 
\int_{t_1}^{t_2}\int_{\Omega}\ten{\chi}*\eta_{\epsilon}:\ten{T}^d*\eta_{\epsilon}*\eta_{\delta} \dxdt ,
\nonumber
\end{equation}
and finally passing to the limit with $\epsilon\rightarrow 0$ and then with $t_1\to0$
\begin{equation}
\int_{\Omega}\ten{D}(\ten{\varepsilon}(\vc{u}) - \ten{\varepsilon}^{\bf p}):(\ten{\varepsilon}(\vc{u}) - \ten{\varepsilon}^{\bf p}) \dx \Big|_{0}^{t_2}=
- 
\int_{0}^{t_2}\int_{\Omega}\ten{\chi}:\ten{T}^d \dxdt .
\label{granica_k}
\end{equation}
We multiply \eqref{granica_k} by $\frac{1}{\mu}$ and integrate over $(t_2,t_2+\mu)$ and proceed now in the same manner as in the proof of Lemma~\ref{lm:8} to complete the proof.

\end{proof}

Using the Minty-Browder trick to identify the weak limit $\ten{\chi}$ and the same argumentation as in the previous section, we obtain that 
\begin{equation}
\ten{G}(\tilde{\theta} + \theta_k,\tilde{\ten{T}}^d + \ten{T}_k):(\tilde{\ten{T}}^d + \ten{T}^d_{k})\rightharpoonup \ten{G}(\tilde{\theta} + \theta,\tilde{\ten{T}} +\ten{T}):(\tilde{\ten{T}}^d + \ten{T}^d) 
\quad\mbox{ in }L^1(0,T,L^1(\Omega)).
\end{equation}
Furthermore 
\begin{equation}
\mathcal{T}_k\Big(\ten{G}(\tilde{\theta} + \theta_k,\tilde{\ten{T}}^d + \ten{T}_k):(\tilde{\ten{T}}^d + \ten{T}^d_{k})\Big)\rightharpoonup \ten{G}(\tilde{\theta} + \theta,\tilde{\ten{T}} +\ten{T}):(\tilde{\ten{T}}^d + \ten{T}^d)
\end{equation}
in $L^1(0,T,L^1(\Omega))$. Using convergences presented above we pass to the limit with $k\rightarrow \infty$ in the equations \eqref{limit1} and \eqref{eq:after_limit_l_2}, include the previously removed boundary and volume force term and obtain
\begin{equation}
\begin{split}
\int_0^T\int_{\Omega}\Big(\tilde{\ten{T}} + \ten{T} \Big):\nabla\vc{\varphi} \dxdt 
&= \int_0^T\int_{\Omega}\vc{f}\cdot \vc{\varphi} \dxdt ,
%\\
%\ten{T} = \ten{D}(\ten{\varepsilon} -\ten{\varepsilon}^{\bf p} ),
%\\
%- \int_0^T\int_{\Omega} \ten{\varepsilon}^{\bf p}:\ten{\Phi}_t  \dxdt -
%\int_{\Omega} \ten{\varepsilon}^{\bf p}_0(x):\ten{\Phi}(x,0) \dx &=
%\int_0^T\int_{\Omega} \ten{G}(\theta,\ten{T}^d):\ten{\Phi} \dxdt ,
%\label{eq:after_limit_l_1}
\end{split}
\end{equation}
where
\begin{equation}
\ten{T}=\ten{D}(\ten{\varepsilon}(\vc{u}) - \ten{\varepsilon}^{\bf p} )
\qquad
\tilde{\ten{T}} = \ten{\varepsilon}(\tilde{\vc{u}}),
\end{equation}
and
\begin{equation}
\begin{split}
-\int_0^T\int_{\Omega} (\tilde{\theta} + \theta)\phi_t \dxdt &-
\int_{\Omega} (\tilde{\theta}_0(x) + \theta_0(x))\phi(x,0) \dx  
\\ 
+ \int_0^T\int_{\Omega}  \nabla(\tilde{\theta} +\theta)\cdot\nabla\phi  \dxdt 
&- \int_0^T\int_{\partial\Omega}g_{\theta}\phi  \ds \dt 
\\
&= 
\int_0^T\int_{\Omega} (\tilde{\ten{T}}^d + \ten{T}^d):\ten{G}(\tilde{\theta}+\theta,\tilde{\ten{T}}^d+\ten{T}^d)\phi \dxdt,
%\\
%-\int_0^T \int_{\Omega} S(\theta) \phi_t \dxdt -
%\int_{\Omega} S(\theta_0(x))\phi(0,x) \dx &
%+ \int_0^T \int_{\Omega} S'(\theta)\nabla \theta\cdot\nabla \phi \dxdt 
%\\
%{\color{red} - \int_0^T \int_{\partial\Omega} S'(\theta) g_{\theta}\phi \dxdt}
%+ \int_0^T \int_{\Omega} S''(\theta) |\nabla\theta|^2 \phi \dxdt
%& =
%\int_0^T\int_{\Omega} \ten{T}^d:\ten{G}(\theta,\ten{T}^d) S'(\theta) \phi \dxdt,
%\label{eq:after_limit_l_2}
\end{split}
\end{equation}
%where $\ten{T} = \ten{D}(\ten{\varepsilon}(\vc{u}) + \ten{\varepsilon}^{\bf p})$ 
and
\begin{equation}
\ten{\varepsilon}^{\bf p}(x,t) = \ten{\varepsilon}^{\bf p}_0(x) + \int_0^t \ten{G}(\tilde{\theta} + \theta,\tilde{\ten{T}}^d + \ten{T}^d)\dtau ,
\end{equation}
what completes the proof of Theorem \ref{thm:main2}.

\begin{appendix}
\section{}

%The reasoning in this subsection is built on the grounds of the reasoning presented in the paper \cite{Boccardo}. We show that there exists uniform boundedness of the sequence of temperature $\{\theta_k\}$ in space $L^q(0,T,L^q(\Omega))$ for $1<q<\frac{5}{4} $. %$1<q<\frac{2(N+1)-N}{N+1}$. %{\em Then, using the Aubin-Lions Lemma we get existence of weak solution of heat equation.}

Let  $\mathcal{T}_k(\cdot)$ be a standard truncation operator defined in \eqref{Tk}. In \cite{Boccardo}, the authors showed the existence of  solutions for the heat equation with Dirichlet boundary conditions. The current section is devoted to the existence proof to the problem with Neumann boundary conditions. Two dimensional case was considered in \cite{1240/THESES}.  

%We changes this proof, because we deal with the Neumann boundary conditions, for the proof in two dimensions see also \cite{1240/THESES}. 

We consider the sequence of the heat equations with boundary and initial conditions and with the right hand side of equation in the form 
\begin{equation}
f_k=\mathcal{T}_k\left((\tilde{\ten{T}}^d + \ten{T}_k^d) : \ten{G}(\tilde{\theta} + \theta_k,\tilde{\ten{T}}^d + \ten{T}_k^d)\right)
\end{equation}
 which for every $k\in\mathbb{N}$ belongs to $L^2(0,T,L^2(\Omega))$ and moreover is uniformly bounded $\|f_k\|_{L^1(0,T,L^1(\Omega))}\leq C$ and $f_k\rightarrow f$ in $L^1(0,T,L^1(\Omega))$ as $k\to\infty$. Additionally, we have $\mathcal{T}_k(\theta_0)\in L^2(\Omega)$, $\|\mathcal{T}_k(\theta_0)\|_{L^1(\Omega)}\leq \|\theta_0\|_{L^1(\Omega)}$ and $\mathcal{T}_k(\theta_0)\rightarrow \theta_0$ in $L^1(\Omega)$. To simplify the notation in the remaining part  of the Appendix we denote  $\Omega\times(0,T)$ by $Q$.
 Let us consider the following problem
\begin{equation}
\left\{
\begin{array}{cc}
(\theta_k)_t - \Delta\theta_k = f_k
& \mbox{ in } \Omega\times (0,T),
\\
\frac{\partial\theta_k}{\partial\vc{n}}=0
& \mbox{ in }  \partial\Omega\times(0,T),
\\
\theta_k(\cdot,0)=\mathcal{T}_k(\theta_0)
&  \mbox{ on } \quad \Omega .
\end{array}
\right.
\label{ukla_para_n}
\end{equation}
and its weak formulation
\begin{equation}
\begin{split}
%\int_{0}^T\int_{\Omega}(\theta_k)_t\psi_m(\theta_k) \dxdt -
% \int_{0}^T\int_{\Omega}\Delta\theta_k\psi_m(\theta_k) \dxdt =
%\int_{0}^T\int_{\Omega}f_k\psi_m(\theta_k) \dxdt ,
%\nonumber \\
\int_{0}^T \int_{\Omega}\theta_k\varphi_t \dxdt +
\int_{0}^T\int_{\Omega}\nabla\theta_k\cdot\nabla\varphi \dxdt 
%- \int_{0}^T\int_{\partial\Omega}\nabla\theta_k\cdot\vc{n}\varphi \ds\dt \\
=
\int_{0}^T\int_{\Omega}f_k\varphi \dxdt +\int_\Omega \theta_0\varphi(0)\dx,
\end{split}\label{slab}
\end{equation}
holding for all $\varphi\in L^q(0,T;W^{1,q}(\Omega))$.
%where $\Psi_m(s)=\int_0^s\psi_m(\sigma)d\sigma$.

\begin{lemat}
The sequence of approximate  solutions to the  heat equation \eqref{ukla_para_n} is uniformly bounded in the space $L^q(0,T,W^{1,q}(\Omega))$ for $q<\frac{2 (N + 1) - N}{N + 1}$ ($q<\frac{5}{4}$ in tree dimensional case $N = 3$).  
\label{unif_boun_LqLq}
\end{lemat}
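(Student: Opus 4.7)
The plan is to adapt the Boccardo-Gallou\"et \cite{Boccardo} argument to the Neumann boundary value problem. The strategy consists in deriving a family of energy-type estimates for the truncations $\mathcal{T}_n(\theta_k)$ in the parabolic space $L^\infty(0,T;L^2(\Omega))\cap L^2(0,T;W^{1,2}(\Omega))$, converting these into a measure estimate for the super-level sets of $|\theta_k|$ via a parabolic Gagliardo-Nirenberg embedding, and finally recovering $L^q$ control of $\nabla\theta_k$ itself through a dyadic decomposition of $Q=\Omega\times(0,T)$ according to the size of $|\theta_k|$.

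First, I would test the weak formulation \eqref{slab} (after a time regularization making $(\theta_k)_t$ admissible in the appropriate duality) with $\varphi=\mathcal{T}_n(\theta_k)\mathbf{1}_{(0,\tau)}$ for arbitrary $\tau\in(0,T]$. Introducing the convex primitive $\Psi_n(s):=\int_0^s\mathcal{T}_n(\sigma)\,d\sigma$, which satisfies $\tfrac{1}{2}|\mathcal{T}_n(s)|^2\le\Psi_n(s)\le n|s|$, the standard chain rule yields
\begin{equation*}
\int_\Omega\Psi_n(\theta_k)(\tau)\dx+\int_0^\tau\int_\Omega|\nabla\mathcal{T}_n(\theta_k)|^2\dxdt\le n\|f_k\|_{L^1(Q)}+n\|\theta_0\|_{L^1(\Omega)}\le Cn,
\end{equation*}
with $C$ uniform in $k$. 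Taking the supremum in $\tau$ and using the lower bound on $\Psi_n$ gives
\begin{equation*}
\|\mathcal{T}_n(\theta_k)\|^2_{L^\infty(0,T;L^2(\Omega))}+\|\nabla\mathcal{T}_n(\theta_k)\|^2_{L^2(Q)}\le Cn.
\end{equation*}
Combined with the trivial $\|\mathcal{T}_n(\theta_k)\|_{L^2(Q)}\le T^{1/2}\|\mathcal{T}_n(\theta_k)\|_{L^\infty(L^2)}$, the parabolic Gagliardo-Nirenberg interpolation (which requires no boundary condition and is therefore available for Neumann data) delivers $\|\mathcal{T}_n(\theta_k)\|_{L^{2(N+2)/N}(Q)}\le Cn^{1/2}$. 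Since $|\mathcal{T}_n(\theta_k)|=n$ on $A_n:=\{|\theta_k|>n\}$, Chebyshev's inequality yields the measure estimate $|A_n|\le Cn^{-(N+2)/N}$, i.e.\ $|A_n|\le Cn^{-5/3}$ for $N=3$.

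To conclude, I decompose $Q=\{|\theta_k|<1\}\cup\bigcup_{j\ge 0}E_j$ with $E_j:=\{2^j\le|\theta_k|<2^{j+1}\}$. On $E_j$ one has $\nabla\theta_k=\nabla\mathcal{T}_{2^{j+1}}(\theta_k)$ a.e., and H\"older's inequality together with the bounds established above gives
\begin{equation*}
\int_{E_j}|\nabla\theta_k|^q\dxdt\le|E_j|^{1-q/2}\|\nabla\mathcal{T}_{2^{j+1}}(\theta_k)\|_{L^2(Q)}^q\le C\,2^{j[(N+1)q-(N+2)]/N}.
\end{equation*}
The geometric series $\sum_{j\ge 0}2^{j[(N+1)q-(N+2)]/N}$ converges precisely when $q<(N+2)/(N+1)$, which is $q<5/4$ for $N=3$, while the integral over $\{|\theta_k|<1\}$ is immediately controlled by $|Q|^{1-q/2}\|\nabla\mathcal{T}_1(\theta_k)\|_{L^2(Q)}^q$. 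The most delicate point, and the one really specific to the Neumann case, is the justification of $\mathcal{T}_n(\theta_k)$ as an admissible test function: the absence of Poincar\'e's inequality forces the use of the full $W^{1,2}$ norm in the interpolation (hence the need to control $\|\mathcal{T}_n(\theta_k)\|_{L^2(Q)}$ separately via the $L^\infty(L^2)$ bound), and the energy identity has to be recovered by a time-regularization of $\theta_k$ compatible with the Neumann boundary condition.
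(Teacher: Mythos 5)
Your proof is correct, but it takes a genuinely different route from the paper. The paper tests the weak formulation with the \emph{level-strip} functions $\psi_m$ (equal to $0$ for $|s|\le m$, of unit slope on $m\le|s|\le m+1$, and $\pm1$ beyond), which yields the \emph{uniform-in-$m$} bound $\int_{B_m}|\nabla\theta_k|^2\dxdt\le\|f\|_{L^1(Q)}+\|\theta_0\|_{L^1(\Omega)}$ with $B_m=\{m\le|\theta_k|\le m+1\}$; it then estimates $|B_m|$ by a Chebyshev inequality in terms of $\int_{B_m}|\theta_k|^r$, sums over $m$ via a discrete H\"older inequality, and closes the argument by a bootstrap combining the $L^\infty(0,T;L^1(\Omega))$ bound from Lemma~\ref{LinftyL1}, spatial interpolation, and the (stationary) Sobolev embedding to produce a sub-linear self-improving inequality for $\|\theta_k\|_{L^q(0,T;L^{q^*}(\Omega))}$. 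You instead test with the full truncations $\mathcal{T}_n(\theta_k)$, accept the \emph{linearly growing} energy bound $\|\mathcal{T}_n(\theta_k)\|^2_{L^\infty(L^2)}+\|\nabla\mathcal{T}_n(\theta_k)\|^2_{L^2(Q)}\le Cn$, upgrade it by the parabolic Gagliardo--Nirenberg inequality to $\|\mathcal{T}_n(\theta_k)\|_{L^{2(N+2)/N}(Q)}\le Cn^{1/2}$, deduce the super-level measure estimate $|A_n|\le Cn^{-(N+2)/N}$ by Chebyshev, and conclude by a dyadic decomposition $Q=\{|\theta_k|<1\}\cup\bigcup_j E_j$ and a convergent geometric series. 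Both arguments land exactly on the threshold $q<(N+2)/(N+1)$, matching $\frac{2(N+1)-N}{N+1}$. Your version is closer to the original Boccardo--Gallou\"et scheme and arguably more self-contained: it does not invoke Lemma~\ref{LinftyL1} (the $L^\infty(L^1)$ estimate can in fact be \emph{derived} from your measure estimate), whereas the paper's strip-truncation approach avoids the parabolic embedding for the truncations and the dyadic bookkeeping, at the cost of a longer interpolation chain. One small point you should spell out: the lemma asserts a uniform $L^q(0,T;W^{1,q}(\Omega))$ bound, so you must also bound $\|\theta_k\|_{L^q(Q)}$; this follows immediately from your measure estimate, which places $\theta_k$ uniformly in weak-$L^{(N+2)/N}(Q)$ and hence, since $|Q|<\infty$ and $(N+2)/N>(N+2)/(N+1)$, in $L^q(Q)$ for the whole range of $q$ under consideration.
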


%It is possible to use the same technique to consider problems in more general case, i.e. situations when the right hand sides and initial conditions belong Lp(Lp) and Lp properly 

\begin{proof}
We define the special truncation function $\psi_m(\cdot)$ for every $m\in\mathbb{N}$:
\begin{equation}
\psi_m(s)=\left\{
\begin{array}{ccl}
1 & \mbox{if} & s\geq m+1, \\
s-m & \mbox{if} & m+1\geq s\geq m, \\
0 & \mbox{if} & |s|\leq m, \\
s+m & \mbox{if} & s\geq m+1, \\
-1 & \mbox{if} & s\leq -m-1. \\
\end{array}
\right.
\end{equation}
Using in \eqref{slab}  the test function $\psi_m(\theta_k)$  we obtain

\begin{equation}
\begin{split}
%\int_{0}^T\int_{\Omega}(\theta_k)_t\psi_m(\theta_k) \dxdt -
% \int_{0}^T\int_{\Omega}\Delta\theta_k\psi_m(\theta_k) \dxdt =
%\int_{0}^T\int_{\Omega}f_k\psi_m(\theta_k) \dxdt ,
%\nonumber \\
\int_{0}^T \int_{\Omega}(\Psi_m(\theta_k))_t \dxdt +
\int_{0}^T\int_{\Omega}\nabla\theta_k\cdot\nabla\psi_m(\theta_k) \dxdt 
%- \int_{0}^T\int_{\partial\Omega}\nabla\theta_k\cdot\vc{n}\psi_m(\theta_k) \ds\dt =
%\qquad
%\nonumber \\
=\int_{0}^T\int_{\Omega}f_k\psi_m(\theta_k) \dxdt ,
%\nonumber
\end{split}
\end{equation}
where $\Psi_m(s)=\int_0^s\psi_m(\sigma)d\sigma$. Thus
%From the boundary condition, we know that the boundary integral is equal to zero. Hence
\begin{equation}
\begin{split}
\int_{\Omega}\Psi_m(\theta_k)(T) \dx +
\int_{0}^T\int_{\Omega}\nabla\theta_k\cdot\nabla\psi_m(\theta_k) \dxdt =
\int_{0}^T\int_{\Omega}f_k\psi_m(\theta_k) \dxdt +
\int_{\Omega}\Psi_m(\mathcal{T}_k(\theta_0)) \dx .
\nonumber
\end{split}
\end{equation}
The terms on the right side of the above equation  can be estimated as follows %{\color{red} Czy nie powinna być w drugim równaniu stała C?}
\begin{equation}
\begin{split}
\int_{0}^T\int_{\Omega}f_k\psi_m(\theta_k) \dxdt & \leq 
\|f\|_{L^1(0,T,L^1(\Omega))},
\nonumber \\
\int_{\Omega}\Psi_m(\mathcal{T}_k(\theta_0)) \dx & \leq
\|\theta_0\|_{L^1(\Omega)},
\nonumber
\end{split}
\end{equation}
for every $k,m\in\mathbb{N}$. Additionally, $\int_{\Omega}\Psi_m(\theta_k)(T) dx$ is nonnegative. Hence,
\begin{equation}
\begin{split}
\int_{B_m}|\nabla\theta_k|^2 \dxdt =
\int_{0}^T\int_{\Omega}\nabla\theta_k\cdot\nabla\psi_m(\theta_k) \dxdt \leq
\|f\|_{L^1(0,T,L^1(\Omega))} +
\|\theta_0\|_{L^1(\Omega)},
\nonumber
\end{split}
\end{equation}
where the set $B_m=\left\{ (x,t)\in \Omega\times(0,T): m\leq \theta_k(x,t)\leq m+1 \right\}$. Now let $q \leq \frac{2(N+1) - N}{N + 1}$ and  $r = \frac{N+1}{N} q$ (in our case $q <\frac{5}{4}$ and $r=\frac{4}{3}q$). Using the H\"older inequality we obtain 
\begin{equation}
\begin{split}
\int_{B_m}|\nabla \theta_k|^q \dxdt
%= \int_{B_m}|\nabla \thata_k|^q1 
&\leq
% s= p/q t= p/(p-q) p=2
\left(\int_{B_m}|\nabla\theta_k|^{q\frac{2}{q}} \dxdt
\right)^{\frac{q}{2}}
\left(\int_{B_m}1^{\frac{2}{2-q}} \dxdt \right)^{1-\frac{q}{2}}
\\
& \leq \left(\int_{B_m}|\nabla\theta_k|^2 \dxdt \right)^{\frac{q}{2}}
\left(\int_{B_m} \dxdt \right)^{1-\frac{q}{2}} 
\\
& \leq
c_3 \left(\int_{B_m}\frac{|\theta_k|^r}{m^r} \dxdt \right)^{1-\frac{q}{2}} 
\\
& \leq
c_3 \left(\int_{B_m}|\theta_k|^r \dxdt \right)^{1-\frac{q}{2}}\frac{1}{m^{\frac{r(2-q)}{2}}} 
\\
& \leq
c_3 \left(\int_{B_m}|\theta_k|^r \dxdt \right)^{1-\frac{q}{2}}\left(\frac{1}{m^{\frac{r(2-q)}{q}}}\right)^{\frac{q}{2}} .
\nonumber 
\end{split}
\end{equation}
Then % Holder inequality for the sequence}
\begin{equation}
\begin{split}
\int_Q|\nabla\theta_k|^q \dxdt &\leq
c_4(n_0)+c_3\sum_{m=n_0}^{\infty}\left(\int_{B_m}|\theta_k|^r \dxdt \right)^{1-\frac{q}{2}}\left(\frac{1}{m^{\frac{r(2-q)}{q}}}\right)^{\frac{q}{2}} 
\\
& \leq
c_4(n_0)+c_3\left(\sum_{m=n_0}^{\infty}\int_{B_m}|\theta_k|^r \dxdt \right)^{1-\frac{q}{2}}\left(\sum_{m=n_0}^{\infty}\frac{1}{m^{\frac{r(2-q)}{q}}}\right)^{\frac{q}{2}}
\\
& \leq
c_4(n_0)+c_3\left(\int_{Q}|\theta_k|^r \dxdt \right)^{1-\frac{q}{2}}\left(\sum_{m=n_0}^{\infty}\frac{1}{m^{\frac{r(2-q)}{q}}}\right)^{\frac{q}{2}},
\end{split}
\label{eq:dobre_ograniczenie}
\end{equation}
where $c_4(n_0)=\int_{\{(x,t):|\theta_k(x,t)|\leq n_0\}}|\nabla \theta_k|^q \dxdt $. Using the H\"older inequality we observe that $c_4(n_0)$ is bounded by the terms $\|f\|_{L^1(0,T,L^1(\Omega))}$, $\|u_0\|_{L^1(\Omega)}$ and the measure of the set $Q$. Furthermore, $\frac{r(2-q)}{q}>1$ and $\sum_{m=n_0}^{\infty}m^{-\frac{r(2-q)}{q}}$ is summable.
Using the interpolation inequality for $\|\theta_k\|_{L^q(\Omega)}$ we obtain
\begin{equation}
\begin{split}
\|\theta_k\|_{L^q(\Omega)}\leq
\|\theta_k\|_{L^1(\Omega)}^s
\|\theta_k\|_{L^{q^*}(\Omega)}^{1-s},
\end{split}
\label{eq:int_1}
\end{equation}
where $q^*=\frac{Nq}{N - q}$ ($=\frac{3q}{3 - q}$) and $\frac{1}{q}=\frac{s}{1}+\frac{1-s}{q^*}$. After simple calculations we get that $1-s=\frac{1 - q}{1 - q^*}\frac{q^*}{q}$ (and $0<s<1$). In Lemma \ref{LinftyL1} we showed that $\|\theta_k\|_{L^1(\Omega)}$ is uniformly bounded, hence
\begin{equation}
\begin{split}
\int_0^T\int_{\Omega}|\theta_k|^q \dxdt \leq
C \int_0^T\|\theta_k\|_{L^{q^*}(\Omega)}^{(1-s)q} \dt \leq
C \int_0^T\|\theta_k\|_{L^{q^*}(\Omega)}^{\frac{1 - q}{1 - q^*}q^*} \dt .
\nonumber 
\end{split}
\end{equation}
Using the H\"older inequality we obtain
\begin{equation}
\begin{split}
\int_0^T\int_{\Omega}|\theta_k|^q \dxdt &\leq
C \int_0^T\|\theta_k\|_{L^{q^*}(\Omega)}^{\frac{1 - q}{1 - q^*}q^*} \dt 
\\
& \leq
C \left(\int_0^T\|\theta_k\|_{L^{q^*}(\Omega)}^{\frac{1 - q}{1 - q^*}q^*\frac{q^* - 1}{q - 1}\frac{q}{q^*}} \dt \right)^{\frac{q - 1}{q^*-1}\frac{q^*}{q}}
\\
& =
C \left(\int_0^T\|\theta_k\|_{L^{q^*}(\Omega)}^q \dt \right)^{\frac{q - 1}{q^*-1}\frac{q^*}{q}}.
\nonumber 
\end{split}
\end{equation}
Let us notice that the exponent $\frac{q - 1}{q^*-1}\frac{q^*}{q}=\frac{N(q-1)}{N(q-1)+q}<1$.
Using the interpolation inequality for $\|\theta_k\|_{L^r(\Omega)}$ we get
\begin{equation}
\begin{split}
\|\theta_k\|_{L^r(\Omega)}\leq 
\|\theta_k\|^s_{L^1(\Omega)}
\|\theta_k\|^{1-s}_{L^{q^*}(\Omega)},
\label{eq:int_2}
\end{split}
\end{equation}
where $\frac{1}{r}=\frac{s}{1}+\frac{1-s}{q^*}$. The parameters $s$ are different in each of the interpolation inequalities \eqref{eq:int_1} and \eqref{eq:int_2}. Simple calculations yield that $1-s=\frac{1-r}{1-q^*}\frac{q^*}{r}$. By Lemma \ref{LinftyL1} we conclude that
\begin{equation}
\begin{split}
\|\theta_k\|^r_{L^r(0,T,L^r(\Omega))} & \leq
\int_0^T\|\theta_k\|^r_{L^r(\Omega)} \dt 
\\ 
& \leq
\int_0^T \|\theta_k\|^{sr}_{L^1(\Omega)}\|\theta_k\|^{\frac{1-r}{1-q^*}\frac{q^*}{r}r}_{L^{q^*}(\Omega)} \dt 
\\
& \leq
C \int_0^T \|\theta_k\|^q_{L^{q^*}(\Omega)} \dt =
C \|\theta_k\|^q_{L^q(0,T,L^{q^*}(\Omega))} .
\end{split}
\label{eq:drugie_dobre}
\end{equation}
The Sobolev embedding theorem implies that 
\begin{equation}
\begin{split}
\|\theta_k\|^q_{L^q(0,T,L^{q^*}(\Omega))}=
\int_0^T \left( \int_{\Omega}|\theta_k|^{q^*} \dx \right)^{\frac{q}{q^*}} \dt \leq
C\left(\int_0^T\int_{\Omega}|\theta_k|^q \dxdt + \int_0^T\int_{\Omega}|\nabla\theta_k|^q \dxdt \right).
\nonumber
\end{split}
\end{equation}
Using the previous inequalities we obtain
\begin{equation}
\begin{split}
\|\theta_k\|^q_{L^q(0,T,L^{q^*}(\Omega))}
& \leq
C \|\theta_k\|_{L^q(0,T,L^{q^*}(\Omega))}^{\frac{q-1}{q^*-1}\frac{q^*}{q}}+
c_4(n_0)+D\left(\int_{Q}|\theta_k|^r \dxdt \right)^{1-\frac{q}{2}}
\\
& \leq C \|\theta_k\|_{L^q(0,T,L^{q^*}(\Omega))}^{\frac{q-1}{q^*-1}\frac{q^*}{q}}+
c_4(n_0)+D\|\theta_k\|_{L^q(0,T,L^{q^*}(\Omega))}^{q\frac{2-q}{2}} 
\nonumber
\end{split}
\end{equation}
%Finally, we get
%\begin{equation}
%\begin{split}
%\|\theta_k\|^q_{L^q(0,T,L^{q^*}(\Omega))}
%\leq
%c_4(n_0)+
%C \|\theta_k\|_{L^q(0,T,L^{q^*}(\Omega))}^{\frac{q-1}{q^*-1}\frac{q^*}{q}}+
%D\|\theta_k\|_{L^q(0,T,L^{q^*}(\Omega))}^{q\frac{2-q}{2}},
%\nonumber
%\end{split}
%\end{equation}
and $\frac{q-1}{q^*-1}\frac{q^*}{q}<1$ and $q\frac{2-q}{2}<q$, so we have the uniform boundedness 
\begin{equation}
\begin{split}
\|\theta_k\|^q_{L^q(0,T,L^{q^*}(\Omega))}
\leq
C,
\nonumber
\end{split}
\end{equation}
and from the previous inequalities we get the uniform boundedness of the sequence $\{\theta_k\}$ in the space 
$L^q(0,T,L^{q^*}(\Omega))$. Using this uniform boundedness and inequalities \eqref{eq:dobre_ograniczenie} and \eqref{eq:drugie_dobre} we get the uniform boundedness of the sequence $\{\theta_k\}$ in the spaces $L^q(0,T,W^{1,q}(\Omega))$, which completes the proof.
\end{proof}

%In our case $V_1=W_0^{1,2}(\Omega))$, $V_2=L^2(\Omega)$ and $V_3=W^{-1,2}(\Omega)$. Using the Aubin-Lions Lemma we get that there exist a subsequence $\{\beta(\theta_k)\}$ such that $\beta(\theta_k)$ converge to $\beta(\theta)$ in $L^2(0,T,L^2(\Omega))$ for every $n>0$ and every function $\beta\in C^2_0$ such that $supp(\beta)\subset\{x:|x|<n\}$.
%Let’s multiply first equation from \eqref{ukla_para_n} by $\beta(\theta)$ and by test function $\phi\in C^2_c((0,T)\times\Omega)$ (with compact support) and integrate over $\Omega$ and time interval $(0,T)$. Integrating by part we get:
%\begin{eqnarray}
%\int_0^T\int_{\Omega}\theta'_k\beta(\theta)\phi+
%\int_0^T\int_{\Omega}\nabla\theta_k\cdot\nabla(\beta(\theta)\phi)=
%\int_0^T\int_{\Omega}f_k\beta(\theta)\phi
%\nonumber \\
%-\int_0^T\int_{\Omega}\theta_k(\beta(\theta)\phi)_t+
%\int_0^T\int_{\Omega}\nabla\theta_k\cdot\nabla\beta(\theta)\phi+
%\int_0^T\int_{\Omega}\nabla\theta_k\cdot\nabla\phi\beta(\theta)=
%\int_0^T\int_{\Omega}f_k\beta(\theta)\phi
%\nonumber
%\end{eqnarray}

\begin{lemat}
The sequence $\{\nabla\theta_k\}$ converges strongly to $\nabla \theta$ in $L^1(0,T,L^1(\Omega))$.
\label{zbieznosc_nabla_teta}
\end{lemat}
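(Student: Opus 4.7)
The plan is to combine the uniform $L^q(0,T;W^{1,q}(\Omega))$ bound supplied by Lemma~\ref{unif_boun_LqLq} with a Cauchy-in-measure argument for the gradients, in the spirit of Boccardo--Gallou\"{e}t. Throughout, a subsequence for which $\theta_k\to\theta$ a.e.\ (hence in measure) has already been extracted.

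The key device is to test the difference of the heat equations satisfied by $\theta_k$ and $\theta_m$. Subtracting \eqref{ukla_para_n} for indices $k$ and $m$ gives
\begin{equation}
(\theta_k-\theta_m)_t-\Delta(\theta_k-\theta_m)=f_k-f_m\quad\text{in }Q,
\nonumber
\end{equation}
with homogeneous Neumann condition and initial datum $\mathcal{T}_k(\theta_0)-\mathcal{T}_m(\theta_0)\to 0$ in $L^1(\Omega)$. I would use $\mathcal{T}_\delta(\theta_k-\theta_m)$ as a test function, where $\mathcal{T}_\delta$ is the truncation at level $\delta>0$ from \eqref{Tk}. Since $\mathcal{T}_\delta$ is Lipschitz and bounded, $\mathcal{T}_\delta(\theta_k-\theta_m)\in L^\infty(Q)\cap L^q(0,T;W^{1,q}(\Omega))$ is an admissible test function after the usual time-regularization (mollification in $t$, integration by parts, then removal of the mollifier), exactly as in the Boccardo--Gallou\"{e}t framework. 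Using that the primitive $\int_0^s\mathcal{T}_\delta(\sigma)\,d\sigma\ge 0$ and that $|\mathcal{T}_\delta|\le\delta$, this produces
\begin{equation}
\int_0^T\!\!\int_\Omega |\nabla \mathcal{T}_\delta(\theta_k-\theta_m)|^2 \dxdt \;\le\; \delta\bigl(\|f_k-f_m\|_{L^1(Q)}+\|\mathcal{T}_k(\theta_0)-\mathcal{T}_m(\theta_0)\|_{L^1(\Omega)}\bigr)\;\le\; C\delta,
\nonumber
\end{equation}
with $C$ independent of $k,m,\delta$.

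Next, for any $\eta>0$ I would decompose
\begin{equation}
\{|\nabla\theta_k-\nabla\theta_m|>\eta\}\subset \{|\theta_k-\theta_m|>\delta\}\;\cup\;\{|\theta_k-\theta_m|\le\delta,\ |\nabla\theta_k-\nabla\theta_m|>\eta\}.
\nonumber
\end{equation}
On the second set, $\nabla \mathcal{T}_\delta(\theta_k-\theta_m)=\nabla\theta_k-\nabla\theta_m$, so by Chebyshev its measure is bounded by $C\delta/\eta^2$. The measure of the first set tends to zero as $k,m\to\infty$ since $\theta_k$ converges in measure. Fixing $\eta$, first choosing $\delta$ small, and then $k,m$ large, shows that $\{\nabla\theta_k\}$ is Cauchy in measure on $Q$, so $\nabla\theta_k\to\nabla\theta$ in measure.

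Finally, the uniform bound $\|\nabla\theta_k\|_{L^q(Q)}\le C$ with $q>1$ from Lemma~\ref{unif_boun_LqLq} yields equi-integrability of $\{|\nabla\theta_k|\}$; combined with convergence in measure, Vitali's convergence theorem upgrades this to strong convergence $\nabla\theta_k\to\nabla\theta$ in $L^1(0,T,L^1(\Omega))$. The main technical obstacle is the rigorous justification of $\mathcal{T}_\delta(\theta_k-\theta_m)$ as a test function, since $\partial_t(\theta_k-\theta_m)$ only lies in a sum of an $L^1(Q)$ part and a dual Sobolev part; this requires a careful time-mollification, an integration by parts identity of the form $\int_0^T\langle\partial_t v, \mathcal{T}_\delta(v)\rangle = \int_\Omega\Theta_\delta(v)|_0^T$ with $\Theta_\delta(s)=\int_0^s\mathcal{T}_\delta$, and passage to the limit in the mollification parameter.
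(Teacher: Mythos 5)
Your proof is correct and shares the crucial step with the paper's: you test the difference of the heat equations for $\theta_k$ and $\theta_m$ with the level-$\delta$ truncation $\mathcal{T}_\delta(\theta_k-\theta_m)$ to get the $L^2$ control
\begin{equation}
\int_{\{|\theta_k-\theta_m|\le\delta\}}|\nabla(\theta_k-\theta_m)|^2\dxdt \le C\delta,
\nonumber
\end{equation}
exactly as the paper does with its function $\varphi$ and primitive $\Phi$. Where you diverge is in the concluding step. The paper estimates $\|\nabla(\theta_k-\theta_m)\|_{L^1(Q)}$ directly: H\"older on the set $\{|\theta_k-\theta_m|\le\delta\}$ (using the $L^2$ bound and $|Q|<\infty$) contributes a term of order $\delta^{1/2}$, while H\"older with exponents $q,q'$ on the complement contributes a term controlled by the uniform $L^q$ bound of $\nabla\theta_k$ times the (small) measure of $\{|\theta_k-\theta_m|>\delta\}$, and the $L^1$-Cauchy property of $\{\theta_k\}$ makes this second piece vanish for $k,m$ large. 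You instead show $\{\nabla\theta_k\}$ is Cauchy in measure via a Chebyshev decomposition of the superlevel sets of $|\nabla\theta_k-\nabla\theta_m|$, and then upgrade to $L^1$ convergence with Vitali's theorem, obtaining equi-integrability from the uniform $L^q$ bound with $q>1$. Both conclusions are valid and rest on the same inputs; the paper's is slightly more quantitative (it yields an explicit rate in $\delta$), whereas yours is conceptually cleaner in that it separates the ``near-diagonal smallness'' and ``uniform integrability'' into standard, named theorems. Your caveat about rigorously admitting $\mathcal{T}_\delta(\theta_k-\theta_m)$ as a test function applies equally to the paper's version, which carries out the time integration-by-parts with $\Phi$ formally.
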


\begin{proof}
Let $\varphi$ be such that, for $\varepsilon > 0$ fixed. Let us define a test function
\begin{equation}
\varphi(s) = 
\left\{
\begin{array}{ll}
\varepsilon & s > \varepsilon, 
\\
 s & |s| \leq \varepsilon, 
 \\
-\varepsilon & s < -\varepsilon.
\end{array}
\right.
\end{equation} 
Subtracting equation \eqref{ukla_para_n} with function on right side $f_n$ and $f_m$, and using the test function $\varphi(\theta_n -\theta_m)$ we obtain
\begin{equation}
\begin{split}
\int_{\Omega}\Phi(\theta_n - &\theta_m)(T) \dx +
\int_{D_{n,m,\varepsilon}}|\nabla(\theta_n-\theta_m)|^2 \dxdt =
\nonumber \\
& \int_0^T\int_{\Omega}(f_n-f_m)\varphi(\theta_n-\theta_m) \dxdt +
\int_{\Omega}\Phi(\mathcal{T}_n(\theta_0)- \mathcal{T}_m(\theta_0)) \dx ,
\nonumber 
\end{split}
\end{equation}
where $\Phi(s)=\int_0^s\varphi(\tau)d\tau$ and $D_{n,m,\varepsilon}=\{ (x,t) \in \Omega\times (0,T): |\theta_n(x,t)-\theta_m(x,t)|\leq \varepsilon \}$. The sequence $\mathcal{T}_k(\theta_0)$ is convergent to $\theta_0$ in $L^1(\Omega)$, hence, we can find $n_0$ such that for every $n$, $m$ greater than $n_0$ we have $\int_{\Omega}\Phi(T_n(\theta_0)-T_m(\theta_0))<\varepsilon$. The function $\Phi$ is nonnegative and the right hand side of the equation above is bounded ($\|f_n\|_{L^1(0,T,L^1(\Omega))}\leq \|f\|_{L^1(0,T,L^1(\Omega))}=:B$), hence
\begin{equation}
\begin{split}
\int_{D_{n,m,\varepsilon}}|\nabla(\theta_n-\theta_m)|^2 \dxdt \leq
2\varepsilon B+\varepsilon=(2 B+1 )\varepsilon.
\nonumber 
\end{split}
\end{equation}
The H\"older inequality yields
\begin{equation}
\begin{split}
\int_{D_{n,m,\varepsilon}}|\nabla(\theta_n-\theta_m)| \dxdt  & \leq
\left(\int_{D_{n,m,\varepsilon}}|\nabla(\theta_n-\theta_m)|^2 \dxdt \right)^{\frac{1}{2}}
\left(meas(D_{n,m,\varepsilon})\right)^{\frac{1}{2}}
\\
& \leq
C(2B+1)^{\frac{1}{2}}\varepsilon^{\frac{1}{2}}.
\nonumber 
\end{split}
\end{equation}
Using the decomposition of $Q=D_{n,m,\varepsilon}\cup (Q\setminus D_{n,m,\varepsilon})$ we have to consider  the integral over the second set.
\begin{equation}
\begin{split}
\int_{Q\setminus D_{n,m,\varepsilon}}|\nabla(\theta_n-\theta_m)|  \dxdt \leq
\left(\int_{Q\setminus D_{n,m,\varepsilon}}|\nabla(\theta_n-\theta_m)|^q \dxdt \right)^{\frac{1}{q}}
\left(meas(Q\setminus D_{n,m,\varepsilon})\right)^{1-\frac{1}{q}}
\end{split}
\end{equation}
The first term on the right hand side is bounded, since the sequence $\{\theta_n\}$ is uniformly bounded in $L^q(0,T,W^{1,q}(\Omega))$. The sequence $\{\theta_n\}$ is a Cauchy sequence in $L^1(0,T,L^1(\Omega))$, so there exists $n_0$ such that for all $n,m>n_0$ occurs $\left(meas(Q\setminus D_{n,m,\varepsilon})\right)^{1-\frac{1}{q}}<\varepsilon$. Then from the previous inequalities we obtain
\begin{equation}
\begin{split}
\int_Q|\nabla \theta_n-\nabla \theta_m| \dxdt & =
%\nonumber \\
\int_{D_{n,m,\varepsilon}}|\nabla(\theta_n-\theta_m)| \dxdt  +
\int_{Q\setminus D_{n,m,\varepsilon}}|\nabla(\theta_n-\theta_m)| \dxdt  
\\
& \leq
c_1\varepsilon^{\frac{1}{2}}+c_2\varepsilon
\label{ciag_cau_szac}
\end{split}
\end{equation}
which implies that $\{\nabla\theta_n\}$ is a Cauchy sequence in $L^1(0,T,L^1(\Omega))$.

\end{proof}

\begin{lemat}{Aubin-Lions \cite[Lemma 7.7]{Roubicek}}

Let $V_1$, $V_2$ be Banach spaces, and $V_3$ be a metrizable Hausdorff locally convex space, % (\em{przestrzeń metryzowalna lokalnie wypukła})
$V_1$ be separable and reflexive, $V_1\subset\subset V_2$ (a compact embedding), $V_2 \subset V_3$ (a continuous embedding), $1 < p <\infty$,
$1 \leq q \leq \infty$. Then $\{u: u\in L^p(0,T,V_1);u_t\in L^q(0,T,V_3 )\}\subset\subset L^p(0,T,V_2 )$ (a compact embedding).
\end{lemat}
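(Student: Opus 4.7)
The plan is to take an arbitrary bounded sequence $\{u_n\}$ in the space $W:=\{u\in L^p(0,T,V_1): u_t\in L^q(0,T,V_3)\}$ and extract a subsequence converging strongly in $L^p(0,T,V_2)$. The first step is to establish an Ehrling-type interpolation inequality: because $V_1\subset\subset V_2$ with a compact embedding and $V_2\subset V_3$ continuously, for every $\varepsilon>0$ there exists $C_\varepsilon>0$ such that
\begin{equation}
\|v\|_{V_2}\le \varepsilon \|v\|_{V_1}+C_\varepsilon \,d_{V_3}(v,0)\qquad\text{for all }v\in V_1,
\nonumber
\end{equation}
where $d_{V_3}$ is a translation-invariant metric generating the topology of $V_3$. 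This is proved by contradiction using the compactness of the embedding. Applying it to the difference $u_n-u_m$ and integrating in time reduces the proof to showing that $\{u_n\}$ is Cauchy in $L^p(0,T,V_3)$, since the $V_1$-term is absorbed by the uniform bound coming from $u_n\in W$.

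The second step is to exploit the control of time derivatives to obtain uniform equicontinuity of time translations in $V_3$. Writing $u_n(t+h)-u_n(t)=\int_t^{t+h}(u_n)_s\,ds$ and applying H\"older's inequality in time, one obtains an estimate of the form
\begin{equation}
\|u_n(\cdot+h)-u_n(\cdot)\|_{L^p(0,T-h,V_3)}\le \omega(h)\to 0\quad\text{as }h\to 0^+,\nonumber
\end{equation}
uniformly in $n$, where $\omega$ depends only on the uniform $W$-bound and the H\"older exponent conjugate to $q$. Simultaneously, extracting a subsequence via reflexivity and separability of $V_1$, we may assume $u_n\rightharpoonup u$ weakly in $L^p(0,T,V_1)$, and the slicewise compact embedding $V_1\subset\subset V_2\subset V_3$ together with Mazur's lemma ensures space-compactness of $\{u_n(t)\}$ in $V_3$ for a.e.~$t$. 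The Fr\'echet--Kolmogorov compactness criterion in $L^p(0,T,V_3)$ then yields strong convergence $u_n\to u$ in $L^p(0,T,V_3)$.

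Combining the Ehrling inequality with the uniform $L^p(0,T,V_1)$-bound and the just-obtained strong convergence in $L^p(0,T,V_3)$ gives, for any prescribed $\varepsilon>0$,
\begin{equation}
\limsup_{n,m\to\infty}\|u_n-u_m\|_{L^p(0,T,V_2)}\le C\varepsilon,\nonumber
\end{equation}
whence $\{u_n\}$ is Cauchy in $L^p(0,T,V_2)$ and the claimed compactness follows. The main technical obstacle is the low-regularity setting: since $V_3$ is assumed only to be a metrizable Hausdorff locally convex space rather than a Banach space, the Fr\'echet--Kolmogorov criterion must be applied through the metric structure, and the edge case $q=1$ requires extra care when estimating the time shift, since the H\"older exponent degenerates. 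Once these subtleties are handled, the three-step scheme above closes the argument.
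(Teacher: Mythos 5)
The paper does not prove this lemma; it is invoked by reference to Roub\'i\v{c}ek's textbook, so there is no internal proof to compare against. On its own terms, your three-step scheme (Ehrling interpolation, uniform time-shift estimate, a Fr\'echet--Kolmogorov/Simon compactness criterion in the large space $V_3$, then absorption via Ehrling) is the standard architecture for Aubin--Lions, so the strategy is sound.

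There is, however, a concrete gap in the space-compactness step. You invoke ``the slicewise compact embedding $V_1\subset\subset V_2\subset V_3$ together with Mazur's lemma'' to obtain compactness of $\{u_n(t)\}$ in $V_3$ for a.e.\ $t$. A bound on $\{u_n\}$ in $L^p(0,T,V_1)$ does \emph{not} give, for any individual $t$, a bound on $\{u_n(t)\}_n$ in $V_1$ (the exceptional null set where $\|u_n(t)\|_{V_1}$ is large may depend on $n$), so no slicewise compactness follows, and Mazur's lemma (which concerns convex combinations of weakly convergent sequences) is not the relevant tool here. What Simon's form of the Fr\'echet--Kolmogorov criterion actually requires is relative compactness of the \emph{time averages} $\bigl\{\int_{t_1}^{t_2}u_n\,dt\bigr\}_n$ in $V_3$ for each $0<t_1<t_2<T$; this does follow, because these averages are bounded in $V_1$ by $\|u_n\|_{L^p(0,T,V_1)}(t_2-t_1)^{1/p'}$ and one then uses $V_1\subset\subset V_2\hookrightarrow V_3$. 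Your Ehrling inequality $\|v\|_{V_2}\le\varepsilon\|v\|_{V_1}+C_\varepsilon\,d_{V_3}(v,0)$ also has a formulation problem: the metric of a locally convex metrizable space is not homogeneous, so this inequality is not scale-invariant and cannot be derived by the usual normalize-and-contradict argument. The correct version (as in Roub\'i\v{c}ek's Lemma 7.8) is: for each $\varepsilon>0$ and each bounded set $B\subset V_1$ there is $C_{\varepsilon,B}$ with $\|v-w\|_{V_2}\le\varepsilon+C_{\varepsilon,B}\,d_{V_3}(v,w)$ for $v,w\in B$; this additive form, combined with the time-averaged compactness, is what carries the argument through. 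Finally, you flag the $q=1$ case as needing extra care but leave it unresolved; for $q=1$ the H\"older bound $h^{1/q'}$ degenerates and one must instead argue via the $L^1(0,T,V_3)$-in-time shift estimate combined with the uniform $L^p(0,T,V_1)$ bound, which is precisely what the Simon criterion is designed to accommodate.
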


From the uniform boundedness of the sequence  $\{f_k\}$ in $L^1(0,T,L^1(\Omega))$  and from the uniform boundedness of the sequence $\{\theta_k\}$ in $L^q(0,T,W^{1,q}(\Omega))$ we obtain 
%From the previous a priori estimates we deduce 
that $\{(\theta_n)_t\}$ is a sequence bounded in the space $L^1(0,T,W^{-1,q}(\Omega))$. %{\color{red}Tu powinno być chyba $q'$} 
Consequently the sequence $\{\theta_n\}$ is relatively compact in $L^1(0,T,L^1(\Omega))$. Due to Lemma \ref{unif_boun_LqLq} and Lemma \ref{zbieznosc_nabla_teta} we know that the sequence $\{\theta_n\}$ converges strongly to $\theta$ in $L^q(0,T,W^{1,q}(\Omega))$. %{\color{red} A jak zachowuje sie ciąg pochodnych czasowych?  }
Moreover, for $s$ large enough $(\theta_k)_t$ converges strongly to $\theta_t$ in $L^1(0, T; W^{-1,s}(\Omega))$. Thus, $\theta_k$ converges strongly to $\theta$ in $C([0, T], W^{-1,s}(\Omega))$ and $\theta_k(\cdot, 0)$ converges to $\theta(\cdot, 0)$ in $W^{-1,s}(\Omega)$.

\begin{lemat}
For $q<\frac{2(N+1) - N}{N + 1}$ ($q<\frac{5}{4}$ when $N=3$) there exists $\theta\in L^q(0,T,W^{1,q}(\Omega))\cap C([0,T],W^{-s,2}(\Omega))$ -  a solution to the system
\begin{equation}
\left\{
\begin{array}{cl}
\theta_t - \Delta\theta = f & \mbox{ in } \Omega\times (0,T),\\
\frac{\partial\theta}{\partial\vc{n}}=0 & \mbox{ on } \partial\Omega\times (0,T),\\
\theta (x,0) = \theta_0(x) & \mbox{ in } \Omega.
\end{array}
\right.
\end{equation}
\end{lemat}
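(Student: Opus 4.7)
The plan is to construct $\theta$ as the limit of the approximate solutions $\theta_k$ to \eqref{ukla_para_n}. Since $f_k\in L^2(0,T,L^2(\Omega))$ and $\mathcal{T}_k(\theta_0)\in L^2(\Omega)$ for each fixed $k$, classical linear parabolic theory for the Neumann heat equation yields a weak solution $\theta_k$ satisfying \eqref{slab} with $\theta_k\in L^2(0,T,W^{1,2}(\Omega))\cap C([0,T],L^2(\Omega))$ and $(\theta_k)_t\in L^2(0,T,W^{-1,2}(\Omega))$. The task then reduces to passing to the limit $k\to\infty$ under the low regularity available on $f_k$ and $\mathcal{T}_k(\theta_0)$.

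First I would combine the two nontrivial lemmas of the appendix. Lemma~\ref{unif_boun_LqLq} produces a subsequence (not relabelled) such that $\theta_k\rightharpoonup\theta$ weakly in $L^q(0,T,W^{1,q}(\Omega))$, while Lemma~\ref{zbieznosc_nabla_teta} identifies the weak limit of $\{\nabla\theta_k\}$ as $\nabla\theta$ and in fact upgrades the convergence to strong convergence in $L^1(Q)$. By interpolation with the uniform $L^q$-bound and Vitali's theorem, $\nabla\theta_k\to\nabla\theta$ strongly in $L^r(Q)$ for every $r<q$.

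To obtain the pointwise convergence needed to handle the right-hand side, I would estimate the time derivative via the equation: $(\theta_k)_t=\Delta\theta_k+f_k$. The uniform $L^q(0,T,W^{1,q}(\Omega))$-bound on $\theta_k$ controls $\Delta\theta_k$ in $L^q(0,T,W^{-1,q}(\Omega))$, and the uniform $L^1(Q)$-bound on $f_k$ embeds into $L^1(0,T,W^{-1,s_0}(\Omega))$ for any $s_0$ large enough that $W^{1,s_0'}(\Omega)\hookrightarrow C(\overline{\Omega})$ (in dimension $N=3$, $s_0>3$ suffices). Hence $(\theta_k)_t$ is uniformly bounded in $L^1(0,T,W^{-1,s_0}(\Omega))$, and the Aubin--Lions lemma gives $\theta_k\to\theta$ strongly in $L^1(Q)$ and a.e. on $Q$ along a subsequence. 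Together with $f_k\to f$ in $L^1(Q)$ and the strong convergence of the gradients, the passage to the limit in \eqref{slab} for any smooth test function $\varphi\in C^\infty([0,T]\times\overline{\Omega})$ vanishing at $t=T$ is then routine, yielding the heat equation with Neumann boundary condition in the distributional sense.

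For the continuity assertion $\theta\in C([0,T],W^{-s,2}(\Omega))$ and the recovery of the initial datum, I would fix $s$ so large that $L^1(\Omega)\hookrightarrow W^{-s,2}(\Omega)$ and $W^{-1,s_0}(\Omega)\hookrightarrow W^{-s,2}(\Omega)$. The uniform bound on $(\theta_k)_t$ in $L^1(0,T,W^{-s,2}(\Omega))$ then makes $\{\theta_k\}$ equicontinuous into $W^{-s,2}(\Omega)$, while Lemma~\ref{LinftyL1} provides $\sup_t\|\theta_k(t)\|_{L^1(\Omega)}\le C$, hence pointwise-in-time boundedness in $W^{-s,2}(\Omega)$. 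The Arzel\`a--Ascoli theorem then yields $\theta_k\to\theta$ in $C([0,T],W^{-s,2}(\Omega))$, and since $\mathcal{T}_k(\theta_0)\to\theta_0$ in $L^1(\Omega)\hookrightarrow W^{-s,2}(\Omega)$, one gets $\theta(\cdot,0)=\theta_0$. The main obstacle throughout is the mere $L^1$-integrability of the data, which kills the standard $L^2$-energy method; the key inputs that circumvent this, namely the Boccardo--Gallou\"et truncation argument producing the $L^q W^{1,q}$-bound and the Cauchy argument producing strong $L^1$-convergence of the gradients, have already been established in Lemmas~\ref{unif_boun_LqLq} and~\ref{zbieznosc_nabla_teta}, so the present lemma is essentially a compactness assembly.
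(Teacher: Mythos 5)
Your proposal follows the same overall strategy as the paper: approximate via truncation, invoke the Boccardo--Gallou\"et-type estimates of Lemmas~\ref{unif_boun_LqLq} and~\ref{zbieznosc_nabla_teta}, gain pointwise/strong compactness through a time-derivative bound and Aubin--Lions, and then pass to the limit in the weak formulation. The paper itself places most of these convergences in the discussion preceding the lemma and keeps the lemma's proof to a bare integration by parts, but the mathematics is the same.

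There is one genuine gap in your argument for the $C([0,T],W^{-s,2}(\Omega))$ assertion. You claim that a uniform bound on $(\theta_k)_t$ in $L^1(0,T,W^{-s,2}(\Omega))$ ``makes $\{\theta_k\}$ equicontinuous into $W^{-s,2}(\Omega)$.'' This does not follow: $L^1$-boundedness of the time derivative gives absolute continuity of each $\theta_k$ but not uniform equicontinuity of the family, since
\begin{equation}
\|\theta_k(t_2)-\theta_k(t_1)\|_{W^{-s,2}} \le \int_{t_1}^{t_2}\|(\theta_k)_\tau\|_{W^{-s,2}}\,{\rm d}\tau
\nonumber
\end{equation}
is small uniformly in $k$ only if $\{\|(\theta_k)_t\|_{W^{-s,2}}\}$ is equi-integrable, which mere $L^1$-boundedness does not deliver (compare $\phi_k=k\,\mathbf{1}_{[0,1/k]}$). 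The fix is already within reach from your own earlier observations: since $(\theta_k)_t = \Delta\theta_k + f_k$ in the weak sense, and both $\nabla\theta_k\to\nabla\theta$ strongly in $L^1(Q)$ and $f_k\to f$ strongly in $L^1(Q)$, one has $(\theta_k)_t\to\theta_t$ \emph{strongly} in $L^1(0,T,W^{-s,2}(\Omega))$ for $s$ large enough. Strong $L^1$-convergence of the time derivatives, combined with $\mathcal{T}_k(\theta_0)\to\theta_0$ in $L^1(\Omega)\hookrightarrow W^{-s,2}(\Omega)$, immediately yields uniform-in-time convergence of the primitives, i.e.\ $\theta_k\to\theta$ in $C([0,T],W^{-s,2}(\Omega))$. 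This is precisely the route the paper takes, and it makes the appeal to Arzel\`a--Ascoli unnecessary.
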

\begin{proof}
Choosing in  \eqref{slab}  the test function $\psi\in C^{\infty}(\Omega\times [0,T))$ such that $\psi=0$ on $\Omega\times\{T\}$, we get
\begin{equation}
\begin{split}
\int_0^T\int_{\Omega}(\theta_n)_t\psi \dxdt - \int_0^T\int_{\Omega}\Delta\theta_n\psi \dxdt  &= \int_0^T\int_{\Omega}f_n\psi \dxdt .
\nonumber 
\end{split}
\end{equation}
Then
\begin{equation}
\begin{split}
-\int_0^T\int_{\Omega}\theta_n\psi_t  \dxdt & + 
\int_{\Omega}\theta_n\psi \dx \Big|_0^T 
\\
& + \int_0^T\int_{\Omega}\nabla\theta_n\cdot\nabla\psi  \dxdt -
\int_0^T\int_{\partial\Omega}\frac{\partial\theta_n}{\partial\vc{n}}\psi \dxdt 
 = \int_0^T\int_{\Omega}f_n\psi \dxdt .
\nonumber 
\end{split}
\end{equation}
And finally
\begin{equation}
\begin{split}
-\int_0^T\int_{\Omega}\theta_n\psi_t  \dxdt +
\int_0^T\int_{\Omega}\nabla\theta_n\cdot\nabla\psi  \dxdt 
& = \int_0^T\int_{\Omega}f_n\psi  \dxdt +
\int_{\Omega}T_n(\theta_0)\psi \dx .
\nonumber 
\end{split}
\end{equation}
Using the convergence of the temperatures' sequence we obtain
\begin{equation}
\begin{split}
-\int_0^T\int_{\Omega}\theta\psi_t +
\int_0^T\int_{\Omega}\nabla\theta\cdot\nabla\psi 
= \int_0^T\int_{\Omega}f\psi +
\int_{\Omega}\theta_0\psi.
\nonumber 
\end{split}
\end{equation}

\end{proof}

\section{}\label{B}
In the current section we present the construction of the basis used for approximation of the strain tensor. We adapt it for our particular case, however the idea follows the lines of  \cite[Theorem 4.11]{maleknecas}.
The definitions of spaces $V_k$ and $V_k^s$ were introduced in Section~\ref{sec:4} by \eqref{Vk} and \eqref{Vks}.

%{\color{red} 
%Let us denote the orthogonal complement (with respect to norm $(\cdot,\cdot)_{\ten{D}}$) of $\mbox{lin}\{\ten{\varepsilon}(\ten{w}_1),...,\ten{\varepsilon}(\ten{w}_k)\}$ in $H^s(\Omega, \mathcal{S}^3)$ by $V^s_k$, where $s> 3/2$.
%  We may construct a basis which is an orthonormal basis in orthogonal complement (with respect to norm $(\cdot,\cdot)_{\ten{D}}$) of $\mbox{lin}\{\ten{\varepsilon}(\ten{w}_1),...,\ten{\varepsilon}(\ten{w}_k)\} $ in $L^2(\Omega, \mathcal{S}^3)$ and orthogonal in $V^s_k$. 
%}

Let us consider the following problem: find $\ten{\zeta}_i\in V^s_k$ and $\lambda_i\in\mathbb{R}$ such that
\begin{equation}
\braket{\ten{\zeta}_i,\ten{\Phi}}_s = \lambda_i (\ten{\zeta}_i,\ten{\Phi})_{\ten{D}} \qquad \forall\ \ten{\Phi}\in V^s_k.
\label{eq:war_wl}
\end{equation}
where by $\braket{\cdot,\cdot}_s$ we denote the scalar product in $H^s(\Omega, \mathcal{S}^3)$ and $(\cdot,\cdot)_{\ten{D}}$ is the previously defined scalar product in $L^2(\Omega,\mathcal{S}^3)$.

\begin{tw}
There exist a countable set of eigenvalues $\{\lambda_i\}_{i=1}^{\infty}$ and a corresponding family of eigenfunctions $\{\ten{\zeta}_i\}_{i=1}^{\infty}$  solving \eqref{eq:war_wl} such that
\begin{itemize}
\item $(\ten{\zeta}_i,\ten{\zeta}_j)_{\ten{D}} = \delta_{ij}$ for all $i,j\in\mathbb{N}$,
\item $1\leq \lambda_1 \leq \lambda_2\leq ... $ and $\lambda_i\to \infty$ as $i$ tends to $\infty$,
\item $\braket{\frac{\ten{\zeta}_i}{\sqrt{\lambda_i}},\frac{\ten{\zeta}_j}{\sqrt{\lambda_i}}}_s = \delta_{ij}$ for all $i,j\in\mathbb{N}$,
\item the set $\{\ten{\zeta}_i\}_{i=1}^{\infty}$ is a basis of $V^s_k$.
\item the set $\{\ten{\zeta}_i\}_{i=1}^{\infty}$ is a basis of $V_k$.
\end{itemize}
Moreover, let us define the subspace $H^N\equiv \mbox{span} \{\ten{\zeta}_1,..., \ten{\zeta}_N\}$ and projection $P^N:V^s_k \to H^N$ such that $P^N(\ten{V}) \equiv \sum_{i=1}^N(\ten{V},\ten{\zeta}_i)_{\ten{D}}\ten{\zeta}_i$, then we get
\begin{equation}
\|P^N\varphi\|_{H^s}\le\|\varphi\|_{H^s}
\end{equation}
\label{th:jos}
\end{tw}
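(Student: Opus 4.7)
\textbf{Proof plan for Theorem \ref{th:jos}.} My strategy is the classical spectral theorem approach for generalised eigenvalue problems of the form ``stronger inner product against weaker one''. The first step is to view the scalar products $\braket{\cdot,\cdot}_s$ and $(\cdot,\cdot)_{\ten{D}}$ as two inner products on $V_k^s$, with the former stronger: since $\ten{D}$ is positive definite and bounded, $(\cdot,\cdot)_{\ten{D}}$ is equivalent to the $L^2$ inner product on $V_k$, and the embedding $V_k^s \hookrightarrow V_k$ is continuous. After an inessential rescaling of $\braket{\cdot,\cdot}_s$ (to guarantee $\|\cdot\|_{\ten{D}}\le\|\cdot\|_s$ on $V_k^s$, which will yield $\lambda_i\ge1$), I first verify that $V_k$ is closed in $L^2(\Omega,\mathcal{S}^3)$ (it is the orthogonal complement in $(\cdot,\cdot)_{\ten{D}}$ of a finite-dimensional subspace) and that $V_k^s$ is closed in $H^s(\Omega,\mathcal{S}^3)$ (same reason, using continuity of the inclusion $H^s\hookrightarrow L^2$). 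Both are therefore Hilbert spaces in their respective inner products.

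The next step is to introduce the solution operator $K\colon V_k\to V_k^s$ defined, for $\ten{f}\in V_k$, by $\braket{K\ten{f},\ten{\Phi}}_s=(\ten{f},\ten{\Phi})_{\ten{D}}$ for every $\ten{\Phi}\in V_k^s$. This makes sense by the Riesz representation theorem, since $\ten{\Phi}\mapsto(\ten{f},\ten{\Phi})_{\ten{D}}$ is bounded on $V_k^s$. Composed with the inclusion, $K$ is a bounded operator $V_k\to V_k$. Symmetry of both inner products gives $(K\ten{f},\ten{g})_{\ten{D}}=\braket{K\ten{f},K\ten{g}}_s=(\ten{f},K\ten{g})_{\ten{D}}$ for $\ten{f},\ten{g}\in V_k^s$, and density then extends self-adjointness to $V_k$. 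The Rellich--Kondrachov compact embedding $H^s(\Omega,\mathcal{S}^3)\hookrightarrow\hookrightarrow L^2(\Omega,\mathcal{S}^3)$ for $s>\tfrac32$ implies $K\colon V_k\to V_k$ is compact. Moreover $K$ is positive and injective: $K\ten{f}=\vc{0}$ forces $(\ten{f},\ten{\Phi})_{\ten{D}}=0$ for every $\ten{\Phi}\in V_k^s$, and smooth compactly supported symmetric tensors in $V_k$ are dense in $V_k$, so $\ten{f}=\vc{0}$. Applying the spectral theorem for compact self-adjoint positive operators on a Hilbert space yields a sequence of positive eigenvalues $\mu_1\ge\mu_2\ge\ldots\to 0$ and corresponding eigenfunctions $\{\ten{\zeta}_i\}\subset V_k^s$ which form an orthonormal basis of $V_k$ with respect to $(\cdot,\cdot)_{\ten{D}}$. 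Setting $\lambda_i:=1/\mu_i$ and unravelling the definition of $K$ gives precisely \eqref{eq:war_wl}, and $\lambda_i\to\infty$.

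Once the eigenpairs are produced, the remaining orthogonality properties follow immediately: the relation $\braket{\ten{\zeta}_i,\ten{\zeta}_j}_s=\lambda_i(\ten{\zeta}_i,\ten{\zeta}_j)_{\ten{D}}=\lambda_i\delta_{ij}$ shows that $\{\ten{\zeta}_i/\sqrt{\lambda_i}\}$ is orthonormal in $H^s$. Completeness in $V_k^s$ is the next point: any $\ten{\psi}\in V_k^s$ orthogonal in $\braket{\cdot,\cdot}_s$ to every $\ten{\zeta}_i$ satisfies $\lambda_i(\ten{\psi},\ten{\zeta}_i)_{\ten{D}}=0$, hence $(\ten{\psi},\ten{\zeta}_i)_{\ten{D}}=0$ for every $i$, and basis completeness in $V_k$ then forces $\ten{\psi}=\vc{0}$. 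Consequently $\{\ten{\zeta}_i/\sqrt{\lambda_i}\}$ is an orthonormal basis of $V_k^s$, so for every $\ten{\varphi}\in V_k^s$ one has the Parseval identity $\|\ten{\varphi}\|_{H^s}^2=\sum_{i=1}^\infty\lambda_i|(\ten{\varphi},\ten{\zeta}_i)_{\ten{D}}|^2$.

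The projection bound is then a direct computation. Since $P^N\ten{\varphi}=\sum_{i=1}^N(\ten{\varphi},\ten{\zeta}_i)_{\ten{D}}\ten{\zeta}_i\in H^N\subset V_k^s$, one obtains
\begin{equation*}
\|P^N\ten{\varphi}\|_{H^s}^2=\sum_{i,j=1}^N(\ten{\varphi},\ten{\zeta}_i)_{\ten{D}}(\ten{\varphi},\ten{\zeta}_j)_{\ten{D}}\braket{\ten{\zeta}_i,\ten{\zeta}_j}_s=\sum_{i=1}^N\lambda_i|(\ten{\varphi},\ten{\zeta}_i)_{\ten{D}}|^2\le\|\ten{\varphi}\|_{H^s}^2,
\end{equation*}
which is the claimed estimate. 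The main delicate point in the whole argument is to ensure that $V_k^s$ is sufficiently rich, in particular that it is dense in $V_k$; this follows from density of smooth symmetric tensors combined with the fact that one may subtract off their $(\cdot,\cdot)_{\ten{D}}$-projection onto the finite-dimensional subspace $\mathrm{span}\{\ten{\varepsilon}(\ten{w}_1),\ldots,\ten{\varepsilon}(\ten{w}_k)\}$ without loss of regularity (the functions $\ten{\varepsilon}(\ten{w}_j)$ themselves lie in $H^s$). Everything else reduces to invoking the spectral theorem in a standard way.
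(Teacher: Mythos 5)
Your proof is correct, but it follows a genuinely different route from the paper. The paper establishes Theorem~\ref{th:jos} by the classical variational (Courant--Fischer) construction: it defines $1/\lambda_1$ as the supremum of the Rayleigh quotient $(\ten{V},\ten{V})_{\ten{D}}/\braket{\ten{V},\ten{V}}_s$, extracts a weakly convergent maximizing sequence, differentiates the quotient at the maximizer to recover the Euler--Lagrange equation \eqref{eq:war_wl}, and then iterates on successive $\braket{\cdot,\cdot}_s$-orthogonal complements; unboundedness of the $\lambda_i$, exhaustiveness of the spectrum, and completeness are then each handled by separate contradiction arguments. You instead introduce the solution operator $K\colon V_k\to V_k^s$, verify that it is bounded, self-adjoint, positive, injective, and compact (via Rellich--Kondrachov), and invoke the spectral theorem for compact self-adjoint operators once; the eigenvalue relation, the divergence $\lambda_i\to\infty$, and the $\ten{D}$-orthonormal basis property all drop out simultaneously, and the $H^s$-orthogonality and the projection bound follow by the Parseval identity. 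What the abstract-operator route buys is economy -- the decay $\mu_i\to0$ and completeness come for free from compactness and injectivity, rather than requiring the two separate contradiction arguments in the paper -- at the cost of invoking the spectral theorem as a black box; the paper's variational route is more self-contained and directly yields the min-max characterization. Two minor remarks: your injectivity argument should cite density of $V_k^s$ in $V_k$ (which you do correctly establish at the end via the projection trick, and which is also the paper's final step) rather than density of smooth compactly supported tensors in $V_k$, since generic smooth tensors need not lie in $V_k$; and you are right that the normalization $\lambda_1\ge 1$ requires the embedding constant of $H^s\hookrightarrow L^2$ (in the $\ten{D}$-weighted norm) to be at most $1$, which you fix by rescaling $\braket{\cdot,\cdot}_s$ -- the paper's proof leaves this point implicit.
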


%Proof of Theorem \ref{th:jos} is analogous to the proof presented in \cite{maleknecas}.
\begin{proof}
Proof of  Theorem \ref{th:jos} is divided into few steps.

\noindent
{\it Existence of $\ten{\zeta}_1$} 

\noindent
Let us define
\begin{equation}
\frac{1}{\lambda_1} \equiv \sup_{\ten{V}\in V^s_k \atop \|\ten{v}\|_{H^s}\leq 1} (\ten{V},\ten{V})_{\ten{D}}.
\label{eq:definiecja_war_wl}
\end{equation}
Consequently, there exists a sequence $\{\ten{V}_i\}_{i=1}^{\infty}$ such that $(\ten{V}_i,\ten{V}_i)_{\ten{D}}\to \frac{1}{\lambda_1}$ as $i$ tends to $\infty$ and $\|\ten{V}_i\|_{H^s(\Omega)}=1$. Then, there exist a subsequence $\{\ten{V}_i\}_{i=1}^{\infty}$ (still denoted by i) and  $\ten{\zeta}_1 \in V^s_k$ such that
\begin{equation}
\begin{split}
\ten{V}_i & \rightharpoonup \ten{\zeta}_1
\qquad \mbox{in } V^s_k,
\\
\ten{V}_i & \to \ten{\zeta}_1
\qquad \mbox{in } L^2(\Omega,\mathcal{S}^3).
\end{split}
\end{equation}
If $\|\ten{\zeta}_1\|_{H^s(\Omega)} < 1$, then let us define $\ten{\zeta}=\frac{\ten{\zeta}_1}{\|\ten{\zeta}_1\|_{H^s(\Omega)}}$ and then
\begin{equation}
\|\ten{\zeta}\|_{H^s(\Omega)} =1
\qquad
\mbox{ and }
\qquad
(\ten{\zeta},\ten{\zeta})_{\ten{D}} = \frac{(\ten{\zeta}_1,\ten{\zeta}_1)_{\ten{D}}}{\|\ten{\zeta}_1\|_{H^s(\Omega)}} > \frac{1}{\lambda_1},
\end{equation}
which is contrary with \eqref{eq:definiecja_war_wl} and it implies that $\|\ten{\zeta}_1\|_{H^s(\Omega)} = 1$. To finish the first step we show that $\ten{\zeta}_1$ is an eigenfunction. Let us take $\ten{H}\in V^s_k$ and define the function
\begin{equation}
\Phi(t) = \frac{(\ten{\zeta}_1 + t\ten{H},\ten{\zeta}_1 + t\ten{H})_{\ten{D}}}{\braket{\ten{\zeta}_1 + t\ten{H},\ten{\zeta}_1 + t\ten{H}}_s}.
\end{equation}
Calculating the derivative of function $\Phi(t)$, we obtain
\begin{equation}
\begin{split}
0 = \frac{d}{dt} \Phi(t) |_{t=0} & =
\frac{2(\ten{\zeta}_1,\ten{H})_{\ten{D}}\braket{\ten{\zeta}_1 ,\ten{\zeta}_1}_s - 2(\ten{\zeta}_1,\ten{\zeta}_1)_{\ten{D}}\braket{\ten{\zeta}_1,\ten{H}}_s}{((\ten{\zeta}_1 ,\ten{\zeta}_1 ))_s^2}
\\
& =
\frac{2(\ten{\zeta}_1,\ten{H})_{\ten{D}} - \frac{2}{\lambda_1}\braket{\ten{\zeta}_1,\ten{H}}_s}{\braket{\ten{\zeta}_1 ,\ten{\zeta}_1 }_s^2}
\end{split}
\end{equation}
and then
\begin{equation}
\lambda_1(\ten{\zeta}_1,\ten{H})_{\ten{D}} = \braket{\ten{\zeta}_1,\ten{H}}_s
\qquad
\forall\ \ten{H}\in V^s_k.
\end{equation}

\noindent
{\it Iterative construction} 

\noindent
Assume that for $N\geq 1$ there exists the set of eigenvalues $\{\lambda_i\}_{i=1}^{N}$ and the set of corresponding eigenfunctions $\{\ten{\zeta}_i\}_{i=1}^{N}$. Let us define the space
\begin{equation}
W^N \equiv \{ \ten{V}\in V^s_k: \braket{\ten{V},\ten{\zeta}_i}_s =0, \quad i=1,...,N\}.
\end{equation}
Using the similar construction as in the previous step, we find the next eigenvalue and eigenfunction
\begin{equation}
(\ten{\zeta}_{N+1},\ten{\zeta}_{N+1})_{\ten{D}} = \sup_{\ten{V}\in W^N\atop \|\ten{V}\|_{H^s}=1} (\ten{V},\ten{V})_{\ten{D}} \equiv \frac{1}{\lambda_{N+1}}.
\end{equation}

Finally, we obtain 
\begin{equation}
\begin{split}
&1\leq \lambda_1\leq \lambda_2\leq ...,
\\
&(\ten{\zeta}_i,\ten{\zeta}_j)_{\ten{D}}=0 \qquad \mbox{if } i\leq j,
\\
&\braket{\ten{\zeta}_i,\ten{\zeta}_j}_s=\delta_{ij}.
\end{split}
\end{equation}

\noindent
{\it Unboundedness of eigenvalues} 

\noindent
Let us assume that the set of eigenvalues has  a finite limit, i.e. $\lim_{i\to\infty}\lambda_i =\lambda <\infty$. Since $\|\ten{W}_i\|_{H^s}=1$, using subsequence if it is necessary, we get $\ten{W}_i \to \ten{W}$ in $L^2(\Omega,\mathcal{S}^3)$ as $i\to\infty$. Hence
\begin{equation}
\begin{split}
2 &= \braket{\ten{\zeta}_i,\ten{\zeta}_i}_s + \braket{\ten{\zeta}_j,\ten{\zeta}_j}_s
= \braket{\ten{\zeta}_i - \ten{\zeta}_j,\ten{\zeta}_i - \ten{\zeta}_j}_s
\\
& = \braket{\ten{\zeta}_i ,\ten{\zeta}_i - \ten{\zeta}_j}_s
- \braket{\ten{\zeta}_j,\ten{\zeta}_i - \ten{\zeta}_j}_s
\\
& = \lambda_i (\ten{\zeta}_i ,\ten{\zeta}_i - \ten{\zeta}_j)_{\ten{D}}
- \lambda_j(\ten{\zeta}_j,\ten{\zeta}_i - \ten{\zeta}_j)_{\ten{D}}.
\end{split}
\label{eq:sprz}
\end{equation}
Passing with $i,j$ to $\infty$ we obtain
\begin{equation}
\begin{split}
(\ten{\zeta}_i ,\ten{\zeta}_i - \ten{\zeta}_j)_{\ten{D}} \to 0,
\\
(\ten{\zeta}_j,\ten{\zeta}_i - \ten{\zeta}_j)_{\ten{D}} \to 0.
\end{split}
\label{eq:sprz2}
\end{equation}
Comparing \eqref{eq:sprz} and \eqref{eq:sprz2} we get the contradiction.

\noindent
{\it The Set $\{\lambda_i\}_{i=1}^{\infty}$ contains all eigenvalues} \\
Let us assume that there exists an eigenvalue $\lambda$ such that $\lambda \notin \{\lambda_i\}_{i=1}^{\infty}$. Let $\ten{W}$ be the corresponding eigenfunction to the eigenvalue $\lambda$ and
\begin{equation}
\braket{\ten{\zeta},\ten{\Phi} }_s=\lambda (\ten{\zeta},\ten{\Phi})_{\ten{D}}
\qquad
\ten{\Phi} \in V^s_k.
\end{equation}
Without loss of  generality, $\|\ten{\zeta}\|_{H^s}=1$.
Moreover, there exists $i\in\mathbb{N}$ such that $\lambda_i < \lambda <\lambda_{i+1}$. Then, for all $k=1,...,i$
\begin{equation}
\begin{split}
\braket{\ten{\zeta}_k,\ten{\zeta} }_s = \lambda_k (\ten{\zeta}_k,\ten{\zeta} )_{\ten{D}},
\\
\braket{\ten{\zeta},\ten{\zeta}_k }_s = \lambda (\ten{\zeta},\ten{\zeta}_k )_{\ten{D}}.
\end{split}
\end{equation}
Hence, $(\ten{\zeta},\ten{\zeta}_k )_{\ten{D}}=0$ and therefore $\ten{\zeta}\in W^i$ and
\begin{equation}
(\ten{\zeta},\ten{\zeta})_{\ten{D}}=\frac{1}{\lambda}> \frac{1}{\lambda_i} = \sup_{\ten{V}\in W^N \atop \|\ten{V}\|_{s,2}=1} (\ten{V},\ten{V})_{\ten{D}},
\end{equation}
which is a contradiction.

\noindent
{\it The set $\{\ten{\zeta}_i\}_{i=1}^{\infty}$ is a basis in $V^s_k$} 

\noindent
Let us define $X= \mbox{span}\{\ten{\zeta}_1,\ten{\zeta}_2,...\}$ and let us assume that $X\neq V^s_k$. Then, there exists $\ten{\Phi}\in V^s_k$ such that $\|\ten{\Phi}\|_{H^s(\Omega)} =1$ and $\braket{\ten{\Phi},\ten{\zeta}_i}_{s}=0$ for all $i\in\mathbb{N}$. Moreover, for all $i\in\mathbb{N}$
\begin{equation}
(\ten{\Phi},\ten{\Phi})_{\ten{D}} \leq \sup_{\ten{V}\in W^N \atop \|\ten{V}\|_{H^s}=1} (\ten{V},\ten{V})_{\ten{D}} = \frac{1}{\lambda_1},
\end{equation}
which implies that $\ten{\Phi}=\ten{0}$.

\noindent
{\it Renormalization of basis} 

\noindent
To complete the proof we may renormalize the basis 
\begin{equation}
\widehat{\ten{\zeta}_i}\equiv \frac{\ten{\zeta}_i}{\sqrt{\lambda_i}}.
\end{equation}
for all $i\in\mathbb{N}$.

\noindent
{\it The set $\{\ten{\zeta}_i\}_{i=1}^{\infty}$ is a basis in $V_k$} 

\noindent
Observe that the space $V_k^s$ is dense in $V_k$ in $L^2(\Omega, \mathcal{S}^3)$ norm. For this purpose consider an element $\ten{\xi}$ of $V^k$. To show there exists a sequence $\ten{\xi}^n$  bounded in $V^k_s$ that converges to $\ten{\xi}$ recall that  if $\ten{\xi}$ is in  $L^2(\Omega,\mathcal{S}^3)$, then there exists an approximating sequence
$\overline{\ten{\xi}}^n$ in $H^s(\Omega,\mathcal{S}^3)$. Then the sequence $\ten{\xi}^n$ we construct as follows
$$\ten{\xi}^n:=\overline{\ten{\xi}}^n-P_k\overline{\ten{\xi}}^n,$$
where the projection $P^k$ was defined in the proof of Lemma~\ref{wsp_org_epa}. Then using the continuity of $P^k$ in $H^s(\Omega, \mathcal{S}^3)$ we immediately obtain that $\ten{\xi}^n$ is bounded in $H^s(\Omega, \mathcal{S}^3)$ and converges to $\ten{\xi}\in V_k$. Consequently, $\{\ten{\zeta}_i\}_{i=1}^{\infty}$ is also a basis in $V_k$.

\end{proof}

\end{appendix}
%\newpage

%

\bigskip

{\bf Acknowledgement} P.G. is a coordinator and F.K. is a PhD student in the International PhD Projects Programme of Foundation for Polish Science operated within the Innovative Economy Operational Programme 2007-2013 funded by European Regional Development Fund (PhD Programme: Mathematical Methods in Natural Sciences). PG was supported by the National Science Center, project no. 6085/B/H03/2011/40.  FK was partially supported by grant NCN OPUS 2012/07/B/ST1/03306, A. \'S.-G. was supported  by the grant IdP2011/000661.

%\bibliography{GwiazdaKlaweSwierczewska.bib}{}
\bibliographystyle{plain}

\end{document}